\numberwithin{equation}{section}
\newtheorem{theorem}{Theorem}[section]
\newtheorem{corollary}[theorem]{Corollary}
\newtheorem{lemma}[theorem]{Lemma}
\newtheorem{example}[theorem]{Example}
\theoremstyle{definition}
\newtheorem{remark}[theorem]{Remark}
\theoremstyle{definition}
\theoremstyle{definition}
\def\dashint{\operatorname%
{\,\,\text{\bf-}\kern-.98em\DOTSI\intop\ilimits@\!\!}}
\def\\det{\text{\det}}
\def\.5{\frac{1}{2}}
\newcommand{\RN}[1]{%
  \textup{\uppercase\expandafter{\romannumeral#1}}%
}
\renewcommand{\epsilon}{\varepsilon}
\newcounter{marnote}
\begin{document}

\title[Gradient asymptotics of solutions to the Lam\'{e} systems]{Gradient asymptotics of solutions to the Lam\'{e} systems in the presence of two nearly touching $C^{1,\gamma}$-inclusions in all dimensions}

\author[X. Hao]{Xia Hao}
\address[X. Hao]{School of Mathematical Sciences, Beijing Normal University, Beijing 100875, China. }
\email{xiahao0915@163.com}

\author[Z.W. Zhao]{Zhiwen Zhao}

\address[Z.W. Zhao]{Beijing Computational Science Research Center, Beijing 100193, China.}
\email{zwzhao365@163.com, Corresponding author.}


\date{\today} 


\begin{abstract}
In this paper, we establish the asymptotic expressions for the gradient of a solution to the Lam\'{e} systems with partially infinity coefficients as two rigid $C^{1,\gamma}$-inclusions are very close but not touching. The novelty of these asymptotics, which improve and make complete the previous results of Chen-Li (JFA 2021), lies in that they show the optimality of the gradient blow-up rate in dimensions greater than two.
\end{abstract}

\maketitle


\section{Introduction and principal results}
\subsection{Background}
In the present work, we consider the Lam\'{e} systems with partially infinity coefficients arising from composites in the presence of two close-to-touching stiff $C^{1,\gamma}$-inclusions and aim at establishing the asymptotic expansions of the gradient of a solution to the Lam\'{e} systems in all dimensions, as the distance $\varepsilon$ between these two inclusions tends to zero. This work is stimulated by the numerical investigation of Babu\u{s}ka et al. \cite{BASL1999} concerning the damage and fracture in composite materials, where the Lam\'{e} system was used and they observed computationally that the size of the strain tensor keeps bounded where the distance between two inclusions goes to zero. In response to such observation there has been much progress over the past two decades. For two touching disks, by using the M\"{o}bius transformation and the maximum principle, Bonnetier and Vogelius \cite{BV2000} proved that the gradient of a solution to the scalar conductivity equation remains bounded. The subsequent work \cite{LV2000} completed by Li and Vogelius extended the result to general divergence form second order elliptic equations with piecewise smooth coefficients in any dimension. This extension especially covers the inclusions of arbitrary smooth shapes. Li and Nirenberg \cite{LN2003} further extended the results in \cite{LV2000} to more general divergence form second order elliptic systems including the Lam\'{e} systems and rigorously demonstrated the boundedness of the strain tensor observed in \cite{BASL1999}. Recently, Dong and Li \cite{DL2019} revealed the explicit dependence of the gradient of the solution to the conductivity equation on the contrast $k$ and the distance $\varepsilon$ between two circular fibers. However, the corresponding questions for more general elliptic equations and systems remain to be answered. See p. 94 of \cite{LV2000} and p. 894 of \cite{LN2003} for more details in terms of these open problems.

Since the antiplane shear model is consistent with the two-dimensional conductivity model, it is significantly important to make clear the singular behavior of the electric field with respect to the distance $\varepsilon$ between inclusions, which is the gradient of a solution to the Laplace equation. It has been demonstrated by many mathematicians that when
the conductivity of the inclusions degenerates to infinity, the generic blow-up rates of the electric field are $\varepsilon^{-1/2}$ in dimension two \cite{AKLLL2007,BC1984,BLY2009,AKL2005,Y2007,Y2009,K1993}, $|\varepsilon\ln\varepsilon|^{-1}$ in dimension three \cite{BLY2009,LY2009,BLY2010,L2012}, and $\varepsilon^{-1}$ in higher dimensions \cite{BLY2009}, respectively. Further, more precise characterizations for the singularities of the concentrated field have been established by Ammari et al. \cite{ACKLY2013}, Bonnetier and Triki \cite{BT2013}, Kang et al. \cite{KLY2013,KLY2014}, Li et al. \cite{LLY2019,Li2020}. The blow-up feature for inclusions of the bow-tie shape was studied by Kang and Yun in \cite{KY2019}. In addition, Calo, Efendiev and Galvis \cite{CEG2014} obtained an asymptotic expression of a solution to elliptic equations as the contrast $k$ is sufficiently small or large. For nonlinear $p$-Laplace equation, Gorb and Novikov \cite{GN2012} gave a qualitative characterization of the concentrated field by using the method of barriers. Ciraolo and Sciammetta \cite{CS2019,CS20192} further extended the results in \cite{GN2012} to the Finsler $p$-Laplacian. For more related works, see \cite{FK1973,GB2005,G2015,KLY2015,KY2020,KL2019} and the references therein.

Recently, the above gradient estimates and asymptotics were extended to the vectorial case, namely, the linear systems of elasticity. In physics, we mainly concern the singular behavior of the stress, which is the gradient of a solution to the Lam\'{e} systems. Li,
Li, Bao and Yin \cite{LLBY2014} created a delicate iterate technique with respect to the energy to obtain the exponentially decaying estimate for the gradient of a solution to a class of elliptic systems with the same boundary data in a narrow region. Bao, Li and Li \cite{BLL2015,BLL2017} applied the iterate technique to establish the pointwise upper bound estimates of the stress concentration for two adjacent strictly convex inclusions in all dimensions. A lower bound of the gradient was constructed by introducing a unified blow-up factor to prove the optimality of the blow-up rates in dimensions two and three in a subsequent work \cite{L2018}. Miao and Zhao \cite{MZ202101} further constructed the explicit stress concentration factors to establish the optimal gradient estimates in the presence of the generalized $m$-convex inclusions in all dimensions. The boundary case when the inclusions are nearly touching the matrix boundary was studied in \cite{BJL2017,LZ2020,MZ202102}. It is worth mentioning that Kang and Yu \cite{KY2019} obtained a precise characterization for the singularities of the stress by introducing singular functions and proved that the stress blows up at the rate of $\varepsilon^{-1/2}$ in two dimensions. Note that the smoothness of inclusions require for at least $C^{2,\gamma}$ in the elasticity problem considered above. Recently, by taking advantage of the Campanato's approach and $W^{1,p}$ estimates for elliptic systems with right hand side in divergence form, Chen and Li \cite{CL2019} developed an adapted version of the iterate technique to establish the upper and lower bound estimates on the gradient of a solution to the Lam\'{e} systems with partially infinity coefficients in the presence of two adjacent $C^{1,\gamma}$-inclusions. The results obtained in \cite{CL2019} comprise of the following two parts: on one hand, the upper bounds on the blow-up rate of the gradient are established in two and three dimensions and a lower bound is constructed in dimension two; on the other hand, an asymptotic expansion of the gradient is only derived under the condition of the symmetric $C^{1,\gamma}$-inclusions and the boundary data of odd function type.

In this paper, by using all the systems of equations in linear decomposition, we capture all the blow-up factor matrices in all dimensions whose elements consist of some certain integrals of the solutions to the case when two inclusions are touching. Thus we obtain the asymptotic formulas of the stress concentration in any dimension. Our idea is different from that in \cite{CL2019}, where only partially systems of equations in linear decomposition were considered. In fact, our idea overcomes the difficulty faced in \cite{CL2019} that the blow-up factors in dimensions greater than two can not be captured to give an optimal information about the blow-up rate of the stress. Moreover, we establish the asymptotic expansions of the stress concentration for the generalized $C^{1,\gamma}$-inclusions and boundary data, which means that we don't need to impose some special symmetric condition on the inclusions and the parity condition on the boundary data as in \cite{CL2019}.

To make our paper self-contained and our exposition clear, let $D\subseteq\mathbb{R}^{d}\,(d\geq2)$ be a bounded open set with $C^{1,\gamma}\,(0<\gamma<1)$ boundary, which contains a pair of $C^{1,\gamma}$-subdomains $D_{1}^{\ast}$ and $D_{2}$ such that these two subdomains touch only at one point and they are far away from the exterior boundary $\partial D$. Namely, after a translation and rotation of the coordinates, if necessary,
\begin{align*}
\partial D_{1}^{\ast}\cap\partial D_{2}=\{0\}\subset\mathbb{R}^{d},
\end{align*}
and
\begin{align*}
D_{1}^{\ast}\subset\{(x',x_{d})\in\mathbb{R}^{d}|\,x_{d}>0\},\quad D_{2}\subset\{(x',x_{d})\in\mathbb{R}^{d}|\,x_{d}<0\}.
\end{align*}
Here and throughout the paper, we use superscript prime to denote ($d-1$)-dimensional domains and variables, such as $B'$ and $x'$. By translating $D_{1}^{\ast}$ by a sufficiently small positive constant $\varepsilon$ along $x_{d}$-axis, we obtain $D_{1}^{\varepsilon}$ as follows:
\begin{align*}
D_{1}^{\varepsilon}:=D_{1}^{\ast}+(0',\varepsilon).
\end{align*}
When there is no possibility of confusion, we drop superscripts and denote
\begin{align*}
D_{1}:=D_{1}^{\varepsilon},\quad\mathrm{and}\quad\Omega:=D\setminus\overline{D_{1}\cup D_{2}}.
\end{align*}

We assume that $\Omega$ and $D_{1}\cup D_{2}$ are, respectively, occupied by two different isotropic and homogeneous elastic materials with different Lam\'{e} constants $(\lambda,\mu)$ and $(\lambda_{1},\mu_{1})$. The elasticity tensors for the inclusions $D_{1}\cup D_{2}$ and the matrix $\Omega$ can be expressed, respectively, as $\mathbb{C}^0$ and $\mathbb{C}^1$, with
$$C_{ijkl}^0=\lambda\delta_{ij}\delta_{kl} +\mu(\delta_{ik}\delta_{jl}+\delta_{il}\delta_{jk}),$$
and
$$C_{ijkl}^1=\lambda_1\delta_{ij}\delta_{kl} +\mu_1(\delta_{ik}\delta_{jl}+\delta_{il}\delta_{jk}),$$
where $i,j,k,l=1,2,...,d$ and $\delta_{ij}$ is the kronecker symbol: $\delta_{ij}=0$ for $i\neq j$, $\delta_{ij}=1$ for $i=j$.

Let $u=(u^{1},u^{2},...,u^{d})^{T}:D\rightarrow\mathbb{R}^{d}$ be the elastic displacement field. For a given boundary data $\varphi=(\varphi^{1},\varphi^{2},...,\varphi^{d})^{T}$, we consider the Dirichlet problem for the Lam\'{e} system with piecewise constant coefficients
\begin{align}\label{La.001}
\begin{cases}
\nabla\cdot \left((\chi_{\Omega}\mathbb{C}^0+\chi_{D_{1}\cup D_{2}}\mathbb{C}^1)e(u)\right)=0,&\hbox{in}~~D,\\
u=\varphi, &\hbox{on}~~\partial{D},
\end{cases}
\end{align}
where $e(u)=\frac{1}{2}\left(\nabla u+(\nabla u)^{T}\right)$ is the elastic strain, $\chi_{\Omega}$ and $\chi_{D_{1}\cup D_{2}}$ are the characteristic functions of $\Omega$ and $D_{1}\cup D_{2}$, respectively.

Let problem \eqref{La.001} satisfy the standard ellipticity condition as follows:
\begin{align*}
\mu>0,\quad d\lambda+2\mu>0,\quad \mu_1>0,\quad d\lambda_1+2\mu_1>0.
\end{align*}
For $\varphi\in H^{1}( D;\mathbb{R}^{d})$, it is well known that there is a unique variational solution $u\in H^{1}(D;\mathbb{R}^{d})$ to problem (\ref{La.001}), which is also the minimizer of the following energy functional
\begin{align*}
J[u]=\frac{1}{2}\int_{\Omega}\left((\chi_{\Omega}\mathbb{C}^{0}+\chi_{D_{1}\cup D_{2}}\mathbb{C}^{1})e(u),e(u)\right)dx
\end{align*}
on
\begin{align*}
H_{\varphi}^{1}(\Omega;\mathbb{R}^{d}):=\{u\in H^{1}(\Omega;\mathbb{R}^{d})|\,u-\varphi\in H_{0}^{1}(\Omega;\mathbb{R}^{d})\}.
\end{align*}

Define the linear space of rigid displacement in $\mathbb{R}^{d}$ as follows:
\begin{align}\label{LAK01}
\Psi:=\{\psi\in C^1(\mathbb{R}^{d}; \mathbb{R}^{d})\ |\ \nabla\psi+(\nabla\psi)^T=0\}.
\end{align}
Denote by
\begin{align}\label{OPP}
\left\{e_{i},\,x_{k}e_{j}-x_{j}e_{k}\;\big|\;1\leq\,i\leq\,d,\,1\leq\,j<k\leq d\right\}
\end{align}
a basis of $\Psi$, where $\{e_{1},...,e_{d}\}$ is the standard basis of $\mathbb{R}^{d}$. We rewrite them as $\left\{\psi_{\alpha}\big|\,\alpha=1,2,...,\frac{d(d+1)}{2}\right\}$.

Let $u_{\lambda_{1},\mu_{1}}$ be the solution of (\ref{La.001}) for fixed $\lambda$ and $\mu$. As proved in the Appendix of \cite{BLL2015}, we have
\begin{align*}
u_{\lambda_1,\mu_1}\rightarrow u\quad\hbox{in}\ H^1(D; \mathbb{R}^{d}),\quad \hbox{as}\ \min\{\mu_1, d\lambda_1+2\mu_1\}\rightarrow\infty,
\end{align*}
where $u\in H^1(D; \mathbb{R}^{d})$ is a solution of
\begin{align}\label{La.002}
\begin{cases}
\mathcal{L}_{\lambda, \mu}u:=\nabla\cdot(\mathbb{C}^0e(u))=0,\quad&\hbox{in}\ \Omega,\\
u|_{+}=u|_{-},&\hbox{on}\ \partial{D}_{i},\,i=1,2,\\
e(u)=0,&\hbox{in}~D_{i},\,i=1,2,\\
\int_{\partial{D}_{i}}\frac{\partial u}{\partial \nu_0}\big|_{+}\cdot\psi_{\alpha}=0,&i=1,2,\,\alpha=1,2,...,\frac{d(d+1)}{2},\\
u=\varphi,&\hbox{on}\ \partial{D},
\end{cases}
\end{align}
where
\begin{align*}
\frac{\partial u}{\partial \nu_0}\Big|_{+}&:=(\mathbb{C}^0e(u))\nu=\lambda(\nabla\cdot u)\nu+\mu(\nabla u+(\nabla u)^T)\nu,
\end{align*}
and $\nu$ denotes the unit outer normal of $\partial D_{i}$, $i=1,2$. Here and below the subscript $\pm$ indicates the limit from outside and inside the domain, respectively. The existence, uniqueness and regularity of weak solutions to (\ref{La.002}) have been proved in \cite{BLL2015}. Moreover, the $H^{1}$ weak solution $u$ to problem (\ref{La.002}) belongs to $C^1(\overline{\Omega};\mathbb{R}^{d})\cap C^1(\overline{D}_{1}\cup\overline{D}_{2};\mathbb{R}^{d})$.

Suppose that there exists a constant $R>0$, independent of $\varepsilon$, such that $\partial D_{1}$ and $\partial D_{2}$ near the origin are, respectively, the graphs of two $C^{1,\gamma}$ functions $\varepsilon+h_{1}$ and $h_{2}$, and $h_{i}$, $i=1,2$ satisfy that for $\sigma>0$,
\begin{enumerate}
{\it\item[(\bf{H1})]
$h_{1}(x')-h_{2}(x')=\tau|x'|^{1+\gamma}+O(|x'|^{1+\gamma+\sigma}),\;\mathrm{if}\;x'\in B_{2R}',$
\item[(\bf{H2})]
$|\nabla_{x'}h_{i}(x')|\leq \kappa_{1}|x'|^{\gamma},\;\mathrm{if}\;x'\in B_{2R}',\;i=1,2,$
\item[(\bf{H3})]
$\|h_{1}\|_{C^{1,\alpha}(B'_{2R})}+\|h_{2}\|_{C^{1,\alpha}(B'_{2R})}\leq \kappa_{2},$}
\end{enumerate}
where $\tau$ and $\kappa_{i},i=1,2$, are three positive constants independent of $\varepsilon$. Moreover, we suppose that $h_{1}(x')-h_{2}(x')$ is even with respect to $x_{i}$ in $B_{R}'$ for $i=1,...,d-1.$ For $z'\in B'_{R}$ and $0<t\leq2R$, write
\begin{align*}
\Omega_{t}(z'):=&\left\{x\in \mathbb{R}^{d}~\big|~h_{2}(x')<x_{d}<\varepsilon+h_{1}(x'),~|x'-z'|<{t}\right\}.
\end{align*}
We use the abbreviated notation $\Omega_{t}$ to denote $\Omega_{t}(0')$ with its top and bottom boundaries represented by
\begin{align*}
\Gamma^{+}_{r}:=\left\{x\in\mathbb{R}^{d}|\,x_{d}=\varepsilon+h_{1}(x'),\;|x'|<r\right\},~~
\Gamma^{-}_{r}:=\left\{x\in\mathbb{R}^{d}|\,x_{d}=h_{2}(x'),\;|x'|<r\right\},
\end{align*}
respectively.

To begin with, we introduce a Keller-type scalar auxiliary function $\bar{v}\in C^{1,\gamma}(\mathbb{R}^{d})$ such that $\bar{v}=1$ on $\partial D_{1}$, $\bar{v}=0$ on $\partial D_{2}\cup\partial D$,
\begin{align}\label{zh001}
\bar{v}(x',x_{d}):=\frac{x_{d}-h_{2}(x')}{\delta(x')},\;\,\mathrm{in}\;\Omega_{2R},\quad\mbox{and}~\|\bar{v}\|_{C^{2}(\Omega\setminus\Omega_{R})}\leq C,
\end{align}
where
\begin{align}\label{deta}
\delta(x'):=\varepsilon+h_{1}(x')-h_{2}(x').
\end{align}
We then define a family of vector-valued auxiliary functions as follows:
\begin{align}\label{zzwz002}
\bar{u}_{1}^{\alpha}=&\psi_{\alpha}\bar{v},\quad \alpha=1,2,...,\frac{d(d+1)}{2},
\end{align}
where $\psi_{\alpha}$ is defined in (\ref{OPP}).
\subsection{Main results}
Before listing our main results, we first introduce some notations. Set
\begin{align*}
\Gamma_{\gamma}:=&\Gamma\left(\frac{1}{1+\gamma}\right)\Gamma\left(\frac{\gamma}{1+\gamma}\right),
\end{align*}
where $\Gamma(s)=\int^{+\infty}_{0}t^{s-1}e^{-t}dt$, $s>0$ is the Gamma function. Introduce a definite
constant as follows:
\begin{align}\label{zwzh001}
\mathcal{M}_{\gamma,\tau}=\frac{2\Gamma_{\gamma}}{(1+\gamma)\tau^{\frac{1}{1+\gamma}}},
\end{align}
where $\tau$ is defined in condition ({\bf{H1}}). Define some constants related to the Lam\'{e} pair $(\lambda, \mu)$ as follows:
\begin{align}\label{AZ}
(\mathcal{L}_{d}^{1},...,\mathcal{L}_{d}^{d-1},\mathcal{L}_{d}^{d})=(\mu,...,\mu,\lambda+2\mu).
\end{align}
We additionally suppose that
\begin{align}\label{ITERA}
\kappa_{3}\leq\mu,d\lambda+2\mu\leq\frac{1}{\kappa_{3}},\quad\text{for some constant}\;\kappa_{3}>0.
\end{align}

Denote $\Omega^{\ast}:=D\setminus\overline{(D_{1}^{\ast}\cup D_{2})}$. For $i,j=1,2,\,\alpha,\beta=1,2,...,\frac{d(d+1)}{2}$, define
\begin{align*}
a_{ij}^{\ast\alpha\beta}=\int_{\Omega^{\ast}}(\mathbb{C}^0e(v_{i}^{\ast\alpha}), e(v_j^{\ast\beta}))dx,\quad b_i^{\ast\alpha}=-\int_{\partial D}\frac{\partial v_{i}^{\ast\alpha}}{\partial \nu_0}\large\Big|_{+}\cdot\varphi,
\end{align*}
where $\varphi\in C^{2}(\partial D;\mathbb{R}^{d})$ is a given function and $v_{i}^{\ast\alpha}\in{C}^{2}(\Omega^{\ast};R^d)$, $i=1,2$, $\alpha=1,2,...,\frac{d(d+1)}{2}$, respectively, verify
\begin{equation}\label{qaz001111}
\begin{cases}
\mathcal{L}_{\lambda,\mu}v_{1}^{\ast\alpha}=0,&\mathrm{in}~\Omega^{\ast},\\
v_{1}^{\ast\alpha}=\psi^{\alpha},&\mathrm{on}~\partial{D}_{1}^{\ast}\setminus\{0\},\\
v_{1}^{\ast\alpha}=0,&\mathrm{on}~\partial{D_{2}}\cup\partial{D},
\end{cases}\quad
\begin{cases}
\mathcal{L}_{\lambda,\mu}v_{2}^{\ast\alpha}=0,&\mathrm{in}~\Omega^{\ast},\\
v_{2}^{\ast\alpha}=\psi^{\alpha},&\mathrm{on}~\partial{D}_{2},\\
v_{2}^{\alpha}=0,&\mathrm{on}~(\partial{D_{1}^{\ast}}\setminus\{0\})\cup\partial{D}.
\end{cases}
\end{equation}
We would like to emphasize that the definition of $a_{ij}^{\ast\alpha\beta}$ is only valid in some cases, see Lemma \ref{lemmabc} below for more concrete details.

Unless otherwise stated, in the following we use $C$ to denote a positive constant, whose values may differ from line to line, depending only on $\gamma,d,\tau,\kappa_{1},\kappa_{2},R$ and an upper bound of the $C^{1,\alpha}$ norms of $\partial D_{1}$, $\partial D_{2}$ and $\partial D$, but not on $\varepsilon$. Moreover, we use $O(1)$ to denote some quantity satisfying  $|O(1)|\leq\,C$. Note that from the standard elliptic theory (see Agmon et al. \cite{ADN1959,ADN1964}), we obtain
\begin{align*}
\|\nabla u\|_{L^{\infty}(\Omega\setminus\Omega_{R})}\leq\,C\|\varphi\|_{C^{2}(\partial D)}.
\end{align*}
Then we only need to make clear the singular behavior of $\nabla u$ in the narrow region $\Omega_{R}$.

For $d=2$, we introduce the blow-up factor matrices as follows:
\begin{gather}\label{ZWZML001}
\mathbb{F}_{0}^{\ast}:=\begin{pmatrix}a_{11}^{\ast33}&\sum\limits_{i=1}^{2}a_{i1}^{\ast31}&\sum\limits_{i=1}^{2}a_{i1}^{\ast32}&\sum\limits_{i=1}^{2}a_{i1}^{\ast33} \\ \sum\limits_{j=1}^{2}a_{1j}^{\ast13}&\sum\limits_{i,j=1}^{2}a_{ij}^{\ast11}&\sum\limits_{i,j=1}^{2}a_{ij}^{\ast12}&\sum\limits_{i,j=1}^{2}a_{ij}^{\ast13}\\
\sum\limits_{j=1}^{2}a_{1j}^{\ast23}&\sum\limits_{i,j=1}^{2}a_{ij}^{\ast21}&\sum\limits_{i,j=1}^{2}a_{ij}^{\ast22}&\sum\limits_{i,j=1}^{2}a_{ij}^{\ast23}\\
\sum\limits_{j=1}^{2}a_{1j}^{\ast33}&\sum\limits_{i,j=1}^{2}a_{ij}^{\ast31}&\sum\limits_{i,j=1}^{2}a_{ij}^{\ast32}&\sum\limits_{i,j=1}^{2}a_{ij}^{\ast33}
\end{pmatrix},
\end{gather}
and
\begin{gather}\label{ZWZML002}
\mathbb{F}_{1}^{\ast\alpha}:=\begin{pmatrix} b_{1}^{\ast\alpha}&a_{11}^{\ast\alpha3}&\sum\limits_{i=1}^{2}a_{i1}^{\ast\alpha1}&\sum\limits_{i=1}^{2}a_{i1}^{\ast\alpha2}&\sum\limits_{i=1}^{2}a_{i1}^{\ast\alpha3} \\
b_{1}^{\ast3}&a_{11}^{\ast33}&\sum\limits_{i=1}^{2}a_{i1}^{\ast31}&\sum\limits_{i=1}^{2}a_{i1}^{\ast32}&\sum\limits_{i=1}^{2}a_{i1}^{\ast33}\\
\sum\limits_{i=1}^{2}b_{i}^{\ast1}&\sum\limits_{j=1}^{2}a_{1j}^{\ast13}&\sum\limits_{i,j=1}^{2}a_{ij}^{\ast11}&\sum\limits_{i,j=1}^{2}a_{ij}^{\ast12}&\sum\limits_{i,j=1}^{2}a_{ij}^{\ast13}\\
\sum\limits_{i=1}^{2}b_{i}^{\ast2}&\sum\limits_{j=1}^{2}a_{1j}^{\ast23}&\sum\limits_{i,j=1}^{2}a_{ij}^{\ast21}&\sum\limits_{i,j=1}^{2}a_{ij}^{\ast22}&\sum\limits_{i,j=1}^{2}a_{ij}^{\ast23}\\
\sum\limits_{i=1}^{2}b_{i}^{\ast3}&\sum\limits_{j=1}^{2}a_{1j}^{\ast33}&\sum\limits_{i,j=1}^{2}a_{ij}^{\ast31}&\sum\limits_{i,j=1}^{2}a_{ij}^{\ast32}&\sum\limits_{i,j=1}^{2}a_{ij}^{\ast33}
\end{pmatrix},\;\,\alpha=1,2,
\end{gather}
and
\begin{gather}\label{ZWZML003}
\mathbb{F}_{1}^{\ast3}:=\begin{pmatrix}
b_{1}^{\ast3}&\sum\limits_{i=1}^{2}a_{i1}^{\ast31}&\sum\limits_{i=1}^{2}a_{i1}^{\ast32}&\sum\limits_{i=1}^{2}a_{i1}^{\ast33} \\ \sum\limits_{i=1}^{2}b_{i}^{\ast1}&\sum\limits_{i,j=1}^{2}a_{ij}^{\ast11}&\sum\limits_{i,j=1}^{2}a_{ij}^{\ast12}&\sum\limits_{i,j=1}^{2}a_{ij}^{\ast13}\\
\sum\limits_{i=1}^{2}b_{i}^{\ast2}&\sum\limits_{i,j=1}^{2}a_{ij}^{\ast21}&\sum\limits_{i,j=1}^{2}a_{ij}^{\ast22}&\sum\limits_{i,j=1}^{2}a_{ij}^{\ast23}\\
\sum\limits_{i=1}^{2}b_{i}^{\ast3}&\sum\limits_{i,j=1}^{2}a_{ij}^{\ast31}&\sum\limits_{i,j=1}^{2}a_{ij}^{\ast32}&\sum\limits_{i,j=1}^{2}a_{ij}^{\ast33}
\end{pmatrix}.
\end{gather}

For the remaining term, we denote
\begin{align}\label{GC002}
\varepsilon(\gamma,\sigma):=&
\begin{cases}
\varepsilon^{\min\{\frac{\sigma}{1+\gamma},\frac{\gamma^{2}}{2(1+2\gamma)(1+\gamma)^{2}}\}},&\gamma>\sigma,\\
\varepsilon^{\frac{\gamma^{2}}{2(1+2\gamma)(1+\gamma)^{2}}},&0<\gamma\leq\sigma.
\end{cases}
\end{align}

The first main result is now presented as follows.
\begin{theorem}\label{ZHthm002}
Let $D_{1},D_{2}\subset D\subseteq\mathbb{R}^{2}$ be defined as above, conditions $\rm{(}${\bf{H1}}$\rm{)}$--$\rm{(}${\bf{H3}}$\rm{)}$ hold, $\det\mathbb{F}^{\ast\alpha}_{1}\neq0,$ $\alpha=1,2,3$. Let $u\in H^{1}(D;\mathbb{R}^{2})\cap C^{1}(\overline{\Omega};\mathbb{R}^{2})$ be the solution of (\ref{La.002}). Then for a sufficiently small $\varepsilon>0$ and $x\in\Omega_{R}$,
\begin{align*}
\nabla u=&\sum\limits_{\alpha=1}^{2}\frac{\det\mathbb{F}_{1}^{\ast\alpha}}{\det \mathbb{F}_{0}^{\ast}}\frac{\varepsilon^{\frac{\gamma}{1+\gamma}}(1+O(\varepsilon(\gamma,\sigma)))}{\mathcal{L}_{2}^{\alpha}\mathcal{M}_{\gamma,\tau}}\nabla\bar{u}_{1}^{\alpha}\notag\\
&+\frac{\det\mathbb{F}_{1}^{\ast3}}{\det \mathbb{F}_{0}^{\ast}}(1+O(\varepsilon^{\frac{\gamma}{2(1+2\gamma)}}))\nabla\bar{u}_{1}^{3}+O(1)\delta^{-\frac{1-\gamma}{1+\gamma}}\|\varphi\|_{C^{0}(\partial D)},
\end{align*}
where $\delta$ is defined in \eqref{deta}, the explicit auxiliary functions $\bar{u}_{1}^{\alpha}$, $\alpha=1,2,3$ are defined in \eqref{zzwz002} in the case of $d=2$, the constant $\mathcal{M}_{\gamma,\tau}$ is defined in \eqref{zwzh001}, the Lam\'{e} constants $\mathcal{L}_{2}^{\alpha}$, $\alpha=1,2$ is defined in \eqref{AZ}, the blow-up factor matrices $\mathbb{F}_{0}^{\ast}$ and $\mathbb{F}^{\ast\alpha}_{1},$ $\alpha=1,2,3$ are defined by \eqref{ZWZML001}--\eqref{ZWZML003}, the rest term $\varepsilon(\gamma,\sigma)$ is defined in \eqref{GC002}.

\end{theorem}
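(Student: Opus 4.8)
The plan is to follow the linear-decomposition strategy sketched in the introduction, decomposing $u$ in $\Omega$ against the auxiliary basis and then pinning down the coefficients via a linear system whose matrix of coefficients is governed by the integrals $a_{ij}^{\ast\alpha\beta}$ and $b_i^{\ast\alpha}$. Concretely, write $u = \sum_{i=1}^{2}\sum_{\alpha=1}^{3} C_i^\alpha v_i^\alpha + v_0$ in $\Omega$, where $v_i^\alpha$ solves the Lam\'{e} system in $\Omega$ with $v_i^\alpha=\psi_\alpha$ on $\partial D_i$ and zero on the other interfaces and on $\partial D$, and $v_0$ carries the boundary data $\varphi$ with zero data on the interfaces. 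Because $u$ is constant (a rigid displacement) on each $D_i$, we have $u|_{\partial D_i}=\sum_\alpha c_i^\alpha\psi_\alpha$ for some constants $c_i^\alpha$, so $C_i^\alpha$ are exactly these rigid-motion coefficients. The traction-free conditions $\int_{\partial D_i}\frac{\partial u}{\partial\nu_0}\big|_+\cdot\psi_\beta=0$ then become, after integration by parts and using the energy pairing, a $6\times 6$ linear system $\mathbb{A}\,(C_i^\alpha) = (b_i^\beta)$ whose entries are the energies $a_{ij}^{\alpha\beta}=\int_\Omega(\mathbb{C}^0 e(v_i^\alpha),e(v_j^\beta))$. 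First I would record this decomposition and the resulting linear system carefully, isolating which combinations of the $C_i^\alpha$ actually control $\nabla u$ in $\Omega_R$.

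The next step is the key asymptotic input: the singular behavior of the energies $a_{ij}^{\alpha\beta}$ and of the differences $a_{ij}^{\alpha\beta}-a_{ij}^{\ast\alpha\beta}$ as $\varepsilon\to 0$. Here one uses the Keller-type barrier $\bar u_1^\alpha=\psi_\alpha\bar v$ and the established estimates (from \cite{CL2019}, and the analogous estimates I would quote from the earlier sections of this paper) showing that $v_1^\alpha-\bar u_1^\alpha$ and $v_2^\alpha-(\psi_\alpha-\bar u_1^\alpha)$ have energies that are lower-order in $\varepsilon$ in the narrow region. The dominant contribution to $a_{11}^{\alpha\alpha}$ for $\alpha=1,2$ (the translational directions tangent to the touching point, with Lam\'{e} weight $\mathcal{L}_2^\alpha=\mu$) comes from $\int_{\Omega_R}|\partial_d\bar v|^2\,\mathcal{L}_2^\alpha\,dx\sim \mathcal{L}_2^\alpha\int_{B_R'}\delta(x')^{-1}dx'$, which in two dimensions behaves like $\mathcal{L}_2^\alpha\,\mathcal{M}_{\gamma,\tau}\,\varepsilon^{-\frac{\gamma}{1+\gamma}}$ by the Gamma-function computation underlying \eqref{zwzh001} and \eqref{zwzh001}'s constant; for $\alpha=3$ (the rotational generator, which vanishes to higher order at the origin) the energy stays bounded, $a_{11}^{33}=a_{11}^{\ast33}+o(1)$. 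Combining these, the leading $6\times 6$ matrix degenerates in a controlled way: after rescaling rows/columns by the appropriate powers of $\varepsilon$, $\mathbb{A}$ converges to the limiting matrix built from the $a_{ij}^{\ast\alpha\beta}$, and I would show $\det\mathbb{A}\sim (\det\mathbb{F}_0^\ast)\cdot(\mathcal{L}_2^1\mathcal{M}_{\gamma,\tau}\varepsilon^{-\gamma/(1+\gamma)})(\mathcal{L}_2^2\mathcal{M}_{\gamma,\tau}\varepsilon^{-\gamma/(1+\gamma)})$, with the off-diagonal blocks contributing only lower-order corrections encoded in $\varepsilon(\gamma,\sigma)$.

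With the matrix asymptotics in hand, Cramer's rule gives each coefficient: $C_1^\alpha - C_2^\alpha$ for $\alpha=1,2$ comes out as $\frac{\det\mathbb{F}_1^{\ast\alpha}}{\det\mathbb{F}_0^\ast}\cdot\frac{\varepsilon^{\gamma/(1+\gamma)}}{\mathcal{L}_2^\alpha\mathcal{M}_{\gamma,\tau}}(1+O(\varepsilon(\gamma,\sigma)))$ — the numerator determinant being precisely the one obtained by replacing the relevant column of the limiting matrix by the data vector $(b_i^{\ast\beta})$, which is how $\mathbb{F}_1^{\ast\alpha}$ in \eqref{ZWZML002} is constructed — while $C_1^3-C_2^3$ comes out order $1$, equal to $\frac{\det\mathbb{F}_1^{\ast3}}{\det\mathbb{F}_0^\ast}(1+O(\varepsilon^{\gamma/2(1+2\gamma)}))$. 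Then $\nabla u = \sum_{\alpha}(C_1^\alpha-C_2^\alpha)\nabla v_1^\alpha + (\text{terms with bounded gradient})$, and replacing $\nabla v_1^\alpha$ by $\nabla\bar u_1^\alpha$ at the cost of the stated remainders (using $\|\nabla(v_1^\alpha-\bar u_1^\alpha)\|_{L^\infty(\Omega_R)}$ bounds and $\|\nabla\bar u_1^\alpha\|\sim\delta^{-1}$ for $\alpha=1,2$, $\|\nabla\bar u_1^3\|\lesssim\delta^{-(1-\gamma)/(1+\gamma)}\cdot$bounded, actually $\|\nabla\bar u_1^3\|=O(1)$ away from needing the singular scale) yields the displayed formula, with the final $O(1)\delta^{-(1-\gamma)/(1+\gamma)}\|\varphi\|_{C^0}$ term absorbing $\nabla v_0$ and all the remainders. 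I expect the main obstacle to be the second step — proving that the sub-leading entries of $\mathbb{A}$ (the $\alpha=3$ rows/columns and the cross terms $a_{12}^{\alpha\beta}$) are controlled with precisely the error exponents in \eqref{GC002}, and that the limiting determinant $\det\mathbb{F}_0^\ast$ genuinely appears after the rescaling, since this requires simultaneously tracking several different blow-up scales ($\varepsilon^{-\gamma/(1+\gamma)}$ versus $O(1)$) within one determinant and verifying the convergence $a_{ij}^{\alpha\beta}\to a_{ij}^{\ast\alpha\beta}$ in the bounded entries with a quantitative rate; the nondegeneracy hypothesis $\det\mathbb{F}_1^{\ast\alpha}\neq 0$ is what makes the leading term genuinely present rather than being swallowed by the remainder.
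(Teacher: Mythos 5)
Your overall strategy (decompose $u$ via $v_i^\alpha,v_0$, turn the traction conditions into a linear system with entries $a_{ij}^{\alpha\beta}$, extract the asymptotics of the energies, apply Cramer's rule, and finally replace $\nabla v_1^\alpha$ by $\nabla\bar u_1^\alpha$) is the same as the paper's. But there is a genuine gap at the central step: you work with the untransformed $6\times 6$ system in the unknowns $C_i^\alpha$ and assert that, after rescaling, this matrix ``converges to the limiting matrix built from the $a_{ij}^{\ast\alpha\beta}$'' with ``the off-diagonal blocks contributing only lower-order corrections,'' so that $\det\mathbb{A}\sim\det\mathbb{F}_0^{\ast}\cdot a_{11}^{11}a_{11}^{22}$. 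In $d=2$ this reasoning fails: the entries $a_{12}^{\alpha\alpha}$, $a_{21}^{\alpha\alpha}$ and $a_{22}^{\alpha\alpha}$ for the translational indices $\alpha=1,2$ blow up at the same rate $\varepsilon^{-\gamma/(1+\gamma)}$ as the diagonal entries $a_{11}^{\alpha\alpha}$ (since $\nabla v_2^\alpha\approx-\psi_\alpha\otimes\nabla\bar v$ in the neck), with leading parts that cancel only in suitable combinations; moreover the individual limits $a_{ij}^{\ast\alpha\beta}$ simply do not exist for those index pairs in two dimensions (the touching-inclusion energy diverges, which is exactly why the paper warns that $a_{ij}^{\ast\alpha\beta}$ ``is only valid in some cases''). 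Consequently neither the claimed determinant factorization nor the Cramer computation of $C_1^\alpha-C_2^\alpha$ (there is no single column of the untransformed system whose replacement produces this difference; subtracting the two separate Cramer ratios reintroduces the cancellation problem) follows from the argument as described. The missing idea is precisely the paper's key manipulation: add the two traction equations and change unknowns to $X^1=(C_1^\alpha-C_2^\alpha)$, $X^2=(C_2^\alpha)$, so that the system becomes \eqref{PLA001} with blocks $\mathbb{A}=(a_{11}^{\alpha\beta})$, $\mathbb{B}=(\sum_i a_{i1}^{\alpha\beta})$, $\mathbb{C}=(\sum_j a_{1j}^{\alpha\beta})$, $\mathbb{D}=(\sum_{i,j}a_{ij}^{\alpha\beta})$; only the two diagonal entries $a_{11}^{11},a_{11}^{22}$ diverge, while the sums (Lemma \ref{lemmabc}(iv)), the rotational and cross entries, and the $b_i^\beta$ converge to their starred limits with quantitative rates, and the bordered $5\times5$ structure of $\mathbb{F}_1^{\ast\alpha}$ then drops out of Cramer's rule after factoring $a_{11}^{\beta\beta}$, $\beta\neq\alpha$.

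Two smaller points: you never address why $\det\mathbb{F}_0^{\ast}\neq0$, which is not a hypothesis of the theorem and must be proved (the paper does this via the ellipticity of $\mathbb{C}^0$ and the rigid-displacement Lemma \ref{GLW}); and the comparison of the $\varepsilon$-dependent entries with their limits requires the quantitative convergence estimates of Lemma \ref{KM323} and Lemma \ref{lemmabc} (built on $|v_1^\beta-v_1^{\ast\beta}|\lesssim\varepsilon^{\gamma/(1+2\gamma)}$ and an interpolation/rescaling step), not just the Keller-type barrier bounds you cite, since these rates are what produce the exponents $\varepsilon(\gamma,\sigma)$ and $\varepsilon^{\frac{\gamma}{2(1+2\gamma)}}$ in the statement.
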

\begin{remark}
The asymptotic expansion in Theorem \ref{ZHthm002} improves the corresponding results in \cite{CL2019} in terms of the following two aspects: first, the gradient estimates in Theorems 1.1 and 1.3 of \cite{CL2019} are improved to be a precise asymptotic formula here; second, we get rid of the symmetric assumptions on the domain and boundary data added in Theorem 1.5 of \cite{CL2019} and then obtain the asymptotic expression in Theorem \ref{ZHthm002} for the more generalized $C^{1,\gamma}$-inclusions.
\end{remark}
\begin{remark}
The asymptotic expression in Theorem \ref{ZHthm002}, together with the result in Theorem \ref{ZHthm003}, completely solves the optimality of the blow-up rate of the stress in all dimensions. Note that for $\alpha=1,2,...,\frac{d(d+1)}{2}$, the leading singularity of $\nabla\bar{u}_{1}^{\alpha}$ lies in $\partial_{x_{d}}\bar{u}_{1}^{\alpha}=\psi_{\alpha}\delta^{-1}$. Furthermore, for $\alpha=1,2,...,d$, $|\partial_{x_{d}}\bar{u}_{1}^{\alpha}|$ blows up at the rate of $\varepsilon^{-1}$ in the ($d-1$)-dimensional ball $\{|x'|\leq\varepsilon^{\frac{1}{1+\gamma}}\}\cap\Omega$, while it blows up at the rate of $\varepsilon^{-\frac{\gamma}{1+\gamma}}$ on the cylinder surface $\{|x'|=\varepsilon^{\frac{1}{1+\gamma}}\}\cap\Omega$ for $\alpha=d+1,...,\frac{d(d+1)}{2}$. Then in light of decomposition \eqref{Decom002}, we see from the results in Theorems \ref{ZHthm002} and \ref{ZHthm003} that the singular parts of $\nabla u$ consist of the following two parts: one of them is $\sum_{\alpha=1}^{d}(C_{1}^\alpha-C_{2}^\alpha)\nabla{v}_{1}^\alpha$ with its singularity being, respectively, of order $O(\varepsilon^{-\frac{1}{1+\gamma}})$ and $O(\varepsilon^{-1})$ in two dimensions and higher dimensions; the other part is $\sum_{\alpha=d+1}^{\frac{d(d+1)}{2}}(C_{1}^\alpha-C_{2}^\alpha)\nabla{v}_{1}^\alpha$ with its singularity of order $O(\varepsilon^{-\frac{\gamma}{1+\gamma}})$ in all dimensions. Then $\nabla u$ blows up at the rate of $\varepsilon^{-\frac{1}{1+\gamma}}$ in the case of $d=2$ and $\varepsilon^{-1}$ in the case of $d\geq3$, respectively.

\end{remark}
\begin{remark}
In fact, we can conclude from the assumed condition $\det\mathbb{F}^{\ast\alpha}_{1}\neq0$, $\alpha=1,2,3$ in Theorem \ref{ZHthm002} that $\varphi\not\equiv0$ on $\partial D$. If not, suppose that $\varphi\equiv0$ on $\partial D$. Then we obtain that $b_i^{\alpha}=-\int_{\partial D}\frac{\partial v_{i}^{\alpha}}{\partial \nu_0}|_{+}\cdot\varphi=0$ and thus $\det\mathbb{F}^{\ast\alpha}_{1}=0$. This is a contradiction. Additionally, it is worth emphasizing that it is not easy to give some special examples in terms of the domain and the boundary data such that $\det\mathbb{F}^{\ast\alpha}_{1}\neq0$. This is primarily caused by the fact that the blow-up factor matrix $\mathbb{F}^{\ast\alpha}_{1}$ doesn't have symmetrical characteristic of the structure such that it is difficult to deal with them by the same argument as in the proof of $\det\mathbb{F}^{\ast}_{0}\neq0$ below. Finally, it will be of interest to compute the blow-up factor matrix $\mathbb{F}^{\ast\alpha}_{1}$ by using numerical techniques in future.

\end{remark}

For $d\geq3$, we introduce the blow-up factor matrices as follows:
\begin{align*}
&\mathbb{A}^{\ast}=(a_{11}^{\ast\alpha\beta})_{\frac{d(d+1)}{2}\times\frac{d(d+1)}{2}},\quad \mathbb{B}^{\ast}=\bigg(\sum\limits^{2}_{i=1}a_{i1}^{\ast\alpha\beta}\bigg)_{\frac{d(d+1)}{2}\times\frac{d(d+1)}{2}},\notag\\
&\mathbb{C}^{\ast}=\bigg(\sum\limits^{2}_{j=1}a_{1j}^{\ast\alpha\beta}\bigg)_{\frac{d(d+1)}{2}\times\frac{d(d+1)}{2}},\quad \mathbb{D}^{\ast}=\bigg(\sum\limits^{2}_{i,j=1}a_{ij}^{\ast\alpha\beta}\bigg)_{\frac{d(d+1)}{2}\times\frac{d(d+1)}{2}}.
\end{align*}
For $\alpha=1,2,...,\frac{d(d+1)}{2}$, we replace the elements of $\alpha$-th column in the matrix $\mathbb{A}^{\ast}$ and $\mathbb{C}^{\ast}$ by column vectors $\Big(b_{1}^{\ast1},...,b_{1}^{\frac{\ast d(d+1)}{2}}\Big)^{T}$ and $\Big(\sum\limits_{i=1}^{2}b_{i}^{\ast1},...,\sum\limits_{i=1}^{2}b_{i}^{\ast\frac{d(d+1)}{2}}\Big)^{T}$, respectively, and then denote these two new matrices by $\mathbb{A}_{2}^{\ast\alpha}$ and $\mathbb{C}_{2}^{\ast\alpha}$ as follows:
\begin{gather*}
\mathbb{A}_{2}^{\ast\alpha}=
\begin{pmatrix}
a_{11}^{\ast11}&\cdots&b_{1}^{\ast1}&\cdots&a_{11}^{\ast1\,\frac{d(d+1)}{2}} \\\\ \vdots&\ddots&\vdots&\ddots&\vdots\\\\a_{11}^{\ast\frac{d(d+1)}{2}\,1}&\cdots&b_{1}^{\ast\frac{d(d+1)}{2}}&\cdots&a_{11}^{\ast\frac{d(d+1)}{2}\,\frac{d(d+1)}{2}}
\end{pmatrix},
\end{gather*}
and
\begin{gather*}
\mathbb{C}_{2}^{\ast\alpha}=
\begin{pmatrix}
\sum\limits_{j=1}^{2} a_{1j}^{\ast11}&\cdots&\sum\limits_{i=1}^{2}b_{i}^{\ast1}&\cdots&\sum\limits_{j=1}^{2} a_{1j}^{\ast1\,\frac{d(d+1)}{2}} \\\\ \vdots&\ddots&\vdots&\ddots&\vdots\\\\\sum\limits_{j=1}^{2} a_{1j}^{\ast\frac{d(d+1)}{2}\,1}&\cdots&\sum\limits_{i=1}^{2}b_{i}^{\ast\frac{d(d+1)}{2}}&\cdots&\sum\limits_{j=1}^{2} a_{1j}^{\ast\frac{d(d+1)}{2}\,\frac{d(d+1)}{2}}
\end{pmatrix}.
\end{gather*}
Define
\begin{align}\label{GC009}
\mathbb{F}^{\ast\alpha}_{2}=\begin{pmatrix} \mathbb{A}^{\ast\alpha}_{2}&\mathbb{B}^{\ast} \\  \mathbb{C}^{\ast\alpha}_{2}&\mathbb{D}^{\ast}
\end{pmatrix},\;\,\alpha=1,2,...,\frac{d(d+1)}{2},\quad \mathbb{F}=\begin{pmatrix} \mathbb{A}^{\ast}&\mathbb{B}^{\ast} \\  \mathbb{C}^{\ast}&\mathbb{D}^{\ast}
\end{pmatrix}.
\end{align}

Denote
\begin{align}\label{NZKL001}
\bar{\varepsilon}(\gamma,d)=
\begin{cases}
\varepsilon^{\frac{\gamma^{2}(1-\gamma)}{2(1+2\gamma)(1+\gamma)^{2}}},&d=3,\\
\varepsilon^{\frac{\gamma^{2}}{2(1+2\gamma)(1+\gamma)^{2}}\min\{1+\gamma,2-\gamma\}},&d=4,\\
\varepsilon^{\frac{\gamma^{2}}{2(1+2\gamma)(1+\gamma)}},&d\geq5.
\end{cases}
\end{align}

Then, we state the second main result of this paper in the following.
\begin{theorem}\label{ZHthm003}
Let $D_{1},D_{2}\subset D\subseteq\mathbb{R}^{d}\,(d\geq3)$ be defined as above, conditions $\rm{(}${\bf{H1}}$\rm{)}$--$\rm{(}${\bf{H3}}$\rm{)}$ hold, and $\det\mathbb{F}_{2}^{\ast\alpha}\neq0,$ $\alpha=1,2,...,\frac{d(d+1)}{2}$. Let $u\in H^{1}(D;\mathbb{R}^{d})\cap C^{1}(\overline{\Omega};\mathbb{R}^{d})$ be the solution of (\ref{La.002}). Then for a sufficiently small $\varepsilon>0$ and $x\in\Omega_{R}$,
\begin{align}\label{PAGN001}
\nabla u=&\sum\limits_{\alpha=1}^{\frac{d(d+1)}{2}}\frac{\det\mathbb{F}_{2}^{\ast\alpha}}{\det \mathbb{F}^{\ast}}(1+O(\bar{\varepsilon}(\gamma,d)))\nabla\bar{u}^{\alpha}_{1}+O(1)\delta^{-\frac{1}{1+\gamma}}\|\varphi\|_{C^{0}(\partial D)},
\end{align}
where $\delta$ is defined in \eqref{deta}, the explicit auxiliary functions $\bar{u}_{1}^{\alpha}$, $\alpha=1,2,...,\frac{d(d+1)}{2}$ are defined in \eqref{zzwz002}, the constant $\mathcal{M}_{\gamma,\tau}$ is defined in \eqref{zwzh001}, the blow-up factor matrices $\mathbb{F}^{\ast}$ and $\mathbb{F}_{2}^{\ast\alpha},$ $\alpha=1,2,...,\frac{d(d+1)}{2}$, are defined by \eqref{GC009}, the rest term $\bar{\varepsilon}(\gamma,d)$ is defined by \eqref{NZKL001}.

\end{theorem}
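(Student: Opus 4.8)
The plan is to follow the now-standard linear-decomposition scheme for the limiting Lam\'e problem \eqref{La.002}, but to carry \emph{all} $\tfrac{d(d+1)}{2}$ auxiliary systems rather than the partial collection used in \cite{CL2019}. First I would write $u = \sum_{\alpha=1}^{d(d+1)/2}\big(C_1^\alpha v_1^\alpha + C_2^\alpha v_2^\alpha\big) + v_0$ in $\Omega$, where $v_i^\alpha$ solves the analogue of \eqref{qaz001111} on the true domain $\Omega$ (with inclusions at distance $\varepsilon$), $v_0$ carries the exterior data $\varphi$ and vanishes on $\partial D_1\cup\partial D_2$, and the constants $C_i^\alpha$ are determined by the $\tfrac{d(d+1)}{2}$ ``no-net-force/torque'' conditions $\int_{\partial D_i}\partial_{\nu_0}u|_+\cdot\psi_\beta=0$. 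Plugging the decomposition into these conditions yields a linear algebraic system for the vector $(C_1^\alpha,C_2^\alpha)$ whose coefficient matrix has the four blocks $a_{ij}^{\alpha\beta}=\int_\Omega(\mathbb{C}^0 e(v_i^\alpha),e(v_j^\beta))$ and right-hand side built from $b_i^\alpha=-\int_{\partial D}\partial_{\nu_0}v_i^\alpha|_+\cdot\varphi$; in block form this is exactly $\mathbb{F}$ acting on (sums and differences of) the $C_i^\alpha$, so by Cramer's rule $C_1^\alpha-C_2^\alpha = \det\mathbb{F}_2^{\alpha}/\det\mathbb{F}$ up to lower-order corrections.

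Next I would quantify the difference between the ``$\varepsilon$-quantities'' $a_{ij}^{\alpha\beta},b_i^\alpha,\det\mathbb{F}$ and their touching-limit counterparts $a_{ij}^{\ast\alpha\beta},b_i^{\ast\alpha},\det\mathbb{F}^{\ast}$. This rests on the energy comparison $\|\nabla(v_i^\alpha-v_i^{\ast\alpha})\|_{L^2}$ being controlled by a small power of $\varepsilon$ — here the hypothesis $\det\mathbb{F}_2^{\ast\alpha}\neq0$ enters, together with the stability estimate guaranteeing $\det\mathbb{F}^{\ast}\neq0$ (to be proved by the symmetry/positivity argument alluded to in the Remarks, using that $\mathbb{F}^{\ast}$ is a Gram-type matrix for the $v_i^{\ast\alpha}$). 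The key analytic input is the precise boundary-layer analysis in $\Omega_R$: the leading behaviour of $v_1^\alpha$ is captured by $\bar u_1^\alpha=\psi_\alpha\bar v$, and one has $\|\nabla(v_1^\alpha-\bar u_1^\alpha)\|\le C\varepsilon^{-\text{(small)}}$ as in \cite{CL2019,BLL2015}. Integrating $|x'|^{1+\gamma}$-type profiles over $\Omega_{2R}$ and using (H1) produces the Gamma-function constant $\mathcal{M}_{\gamma,\tau}$ in \eqref{zwzh001}; the $\mathcal{L}_d^\alpha=\mu$ (shear, $\alpha\le d-1$) versus $\lambda+2\mu$ (normal, $\alpha=d$) split in \eqref{AZ} comes from which component of $e(\bar u_1^\alpha)$ dominates. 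For $d\ge3$ the rotational modes $\alpha=d+1,\dots,\tfrac{d(d+1)}{2}$ contribute at the same order $\varepsilon^{-1}$ as the translational ones (unlike $d=2$, where they are lower order), which is precisely why \emph{all} systems must be retained to see the optimal rate — this is the conceptual novelty over \cite{CL2019}.

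Then I would assemble: $\nabla u = \sum_\alpha (C_1^\alpha-C_2^\alpha)\nabla v_1^\alpha + (\text{terms with }C_1^\alpha+C_2^\alpha\text{ or }\nabla v_0,\nabla v_2^\alpha)$, replace $\nabla v_1^\alpha$ by $\nabla\bar u_1^\alpha$ plus a remainder, replace the $\varepsilon$-coefficients by their starred limits plus a relative error, and collect everything not of the form $\tfrac{\det\mathbb{F}_2^{\ast\alpha}}{\det\mathbb{F}^{\ast}}\nabla\bar u_1^\alpha$ into the $O(1)\delta^{-1/(1+\gamma)}\|\varphi\|_{C^0}$ term, using the a priori bound $|\nabla v_i^\alpha|\le C\delta^{-1/(1+\gamma)}$ together with $\|\nabla u\|_{L^\infty(\Omega\setminus\Omega_R)}\le C\|\varphi\|_{C^2}$. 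The bookkeeping of the various error exponents — tracking $\sigma/(1+\gamma)$ from (H1), the $\gamma^2/(2(1+2\gamma)(1+\gamma)^2)$-type losses from the Campanato iteration, and the dimension-dependent thresholds $\min\{1+\gamma,2-\gamma\}$ (from balancing the volume of $\Omega_{|x'|}$ against the $C^{1,\gamma}$-gap in $d=4$) — is what produces the explicit $\bar\varepsilon(\gamma,d)$ in \eqref{NZKL001}.

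The main obstacle I anticipate is establishing the energy estimate $\|\nabla(v_i^\alpha - v_i^{\ast\alpha})\|_{L^2(\Omega\cap\Omega^{\ast})}\lesssim \varepsilon^{c}$ with an \emph{explicit} exponent $c=c(\gamma,d)$, uniformly as $\varepsilon\to0$, since the domains $\Omega$ and $\Omega^{\ast}$ differ exactly where the solutions are most singular (near the origin), so one cannot simply subtract; one must instead build a good interpolating competitor supported away from the cusp and control the mismatch by the narrow-region energy, which is where the $C^{1,\gamma}$ (rather than $C^{2,\gamma}$) regularity makes the analysis delicate and forces the Campanato-type argument of \cite{CL2019} rather than the classical Schauder approach of \cite{BLL2015,BLL2017}. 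A secondary difficulty is verifying $\det\mathbb{F}^{\ast}\neq0$ without the symmetry assumed in \cite{CL2019}: this should follow from a linear-independence argument for $\{v_i^{\ast\alpha}\}$ in the appropriate energy space, but it requires care because the $v_i^{\ast\alpha}$ are defined on the punctured domain $\Omega^{\ast}$ and one must check that the relevant quadratic form remains coercive despite the touching point.
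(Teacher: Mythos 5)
Your proposal follows essentially the same route as the paper: the decomposition \eqref{Decom}--\eqref{Decom002}, the reduction via the fourth line of \eqref{La.002} to the full linear system \eqref{JGALP001}--\eqref{PLA001} in the unknowns $C_1^\alpha-C_2^\alpha$ and $C_2^\alpha$, Cramer's rule with the blocks $a_{ij}^{\alpha\beta}$ and data $b_i^\alpha$, comparison of every entry with its touching-limit counterpart, the positivity (Gram/Korn) argument for $\det\mathbb{F}^{\ast}\neq0$, and the Campanato/$W^{1,p}$ boundary-layer estimate $\nabla v_1^\alpha=\nabla\bar u_1^\alpha+O(\delta^{-\frac{1}{1+\gamma}})$, assembled exactly as you describe. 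Two points where you deviate from (or misstate) the paper's argument: first, the comparison of $v_i^\alpha$ with $v_i^{\ast\alpha}$ is carried out in the paper not by an $L^2$ energy competitor but pointwise, via boundary estimates and the maximum principle for the Lam\'e system of \cite{MMN2007} on $D\setminus\big(\overline{D_1\cup D_1^{\ast}\cup D_2\cup\mathcal{C}_{\varepsilon^{\theta}}}\big)$, upgraded to gradient closeness by rescaling and interpolation, after which the energies $a_{11}^{\alpha\beta}$ are split into near/far regions; your competitor strategy is plausible but unexecuted, and the hypothesis $\det\mathbb{F}_2^{\ast\alpha}\neq0$ plays no role there (it is only needed to phrase the error multiplicatively as $1+O(\bar\varepsilon(\gamma,d))$). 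Second, your claim that for $d\geq3$ the rotational modes $\alpha=d+1,\dots,\frac{d(d+1)}{2}$ contribute at the same order $\varepsilon^{-1}$ as the translational ones is incorrect: their contribution is only $O(\varepsilon^{-\frac{\gamma}{1+\gamma}})$ in all dimensions; the real reason to retain all $\frac{d(d+1)}{2}$ systems is that only the full system \eqref{JGALP001} has a well-defined touching limit, so that the matrices $\mathbb{F}^{\ast}$, $\mathbb{F}_2^{\ast\alpha}$ — and hence the leading constants $C_1^\alpha-C_2^\alpha$ — can be captured. Neither issue changes the overall strategy, which coincides with the paper's.
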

\begin{remark}
By contrast with the results in \cite{CL2019}, the primary advantage of our idea lies in capturing the blow-up factor matrices in dimensions greater than two to obtain a unified asymptotic expansion in \eqref{PAGN001}, which completely solves the optimality of the blow-up rate of the stress in higher dimensions.

\end{remark}

For the more generalized $C^{1,\gamma}$-inclusions satisfying the following condition:
\begin{align}\label{APRHU001}
\tau_{1}|x'|^{1+\gamma}\leq (h_{1}-h_{2})(x')\leq\tau_{2}|x'|^{1+\gamma},\quad\mathrm{for}\;|x'|\leq2R,\;\tau_{i}>0,\;i=1,2,
\end{align}
by applying the proofs of Theorems \ref{ZHthm002}-\ref{ZHthm003} with a minor modification, we establish the optimal pointwise upper and lower bounds on the gradients as follows:
\begin{corollary}\label{PASL001}
Let $D_{1},D_{2}\subset D\subseteq\mathbb{R}^{d}\,(d\geq3)$ be defined as above, conditions \eqref{APRHU001} and ({\bf{H2}})--({\bf{H3}}) hold. Let $u\in H^{1}(D;\mathbb{R}^{d})\cap C^{1}(\overline{\Omega};\mathbb{R}^{d})$ be the solution of (\ref{La.002}). Then for a sufficiently small $\varepsilon>0$,

$(a)$ if $d=2$, there exist some integer $1\leq\alpha_{0}\leq2$ such that $\det\mathbb{F}_{1}^{\ast\alpha_{0}}\neq0$, then for $x\in\{x'=0'\}\cap\Omega$,
\begin{align*}
\frac{\tau_{1}^{\frac{1}{1+\gamma}}|\det\mathbb{F}_{1}^{\ast\alpha_{0}}|}{C|\mathcal{L}_{2}^{\alpha_{0}}||\det\mathbb{F}_{0}^{\ast}|}\frac{1}{\varepsilon^{\frac{1}{1+\gamma}}}\leq|\nabla u|\leq \frac{\max\limits_{1\leq\alpha\leq 2}\tau_{2}^{\frac{1}{1+\gamma}}|\mathcal{L}_{2}^{\alpha}|^{-1}|\det\mathbb{F}_{1}^{\ast\alpha}|}{|\det\mathbb{F}_{0}^{\ast}|}\frac{C}{\varepsilon^{\frac{1}{1+\gamma}}};
\end{align*}

$(b)$ if $d\geq3$, there exist some integer $1\leq \alpha_{0}\leq d$ such that $\det\mathbb{F}_{2}^{\ast\alpha_{0}}\neq0$, then for $x\in\{x'=0'\}\cap\Omega$,
\begin{align*}
\frac{|\det\mathbb{F}_{2}^{\ast\alpha_{0}}|}{|\det \mathbb{F}^{\ast}|}\frac{1}{C\varepsilon}\leq|\nabla u|\leq \frac{\max\limits_{1\leq\alpha\leq d}|\det\mathbb{F}_{2}^{\ast\alpha}|}{|\det \mathbb{F}^{\ast}|}\frac{C}{\varepsilon},
\end{align*}
where the blow-up factor matrices $\mathbb{F}_{0}^{\ast}$ and $\mathbb{F}^{\ast\alpha}_{1},$ $\alpha=1,2,$ are defined by \eqref{ZWZML001}--\eqref{ZWZML003}, the blow-up factor matrices $\mathbb{F}^{\ast}$ and $\mathbb{F}^{\ast\alpha}_{2},$ $\alpha=1,2,...,d$ are defined in \eqref{GC009}.
\end{corollary}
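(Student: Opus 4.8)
The plan is to derive Corollary \ref{PASL001} as a direct consequence of the asymptotic expansions already established in Theorems \ref{ZHthm002} and \ref{ZHthm003}, observing that the only structural hypothesis actually used in those proofs is a two-sided bound of the form $\tau_1|x'|^{1+\gamma}\le(h_1-h_2)(x')\le\tau_2|x'|^{1+\gamma}$ together with (\textbf{H2})--(\textbf{H3}); condition (\textbf{H1}) is merely the sharper version of this that pins down the constant $\mathcal{M}_{\gamma,\tau}$. So first I would revisit the proofs of Theorems \ref{ZHthm002}--\ref{ZHthm003} and isolate the places where (\textbf{H1}) enters: these are exactly the computations of the denominator integral $\int_{|x'|<R}\delta(x')^{-1}\,dx'$ (and its variants with $|x'|^2$ weights) that produce the factor $\varepsilon^{\gamma/(1+\gamma)}/\mathcal{M}_{\gamma,\tau}$ in $d=2$ and the $O(\varepsilon^{-1})$ normalization in $d\ge3$. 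Under \eqref{APRHU001} these integrals are no longer asymptotically exact but are instead trapped between two explicit constant multiples of the same power of $\varepsilon$, obtained by replacing $\delta$ by $\varepsilon+\tau_1|x'|^{1+\gamma}$ and $\varepsilon+\tau_2|x'|^{1+\gamma}$ respectively and using the elementary scaling $\int_{|x'|<R}(\varepsilon+\tau|x'|^{1+\gamma})^{-1}dx'\sim c_{d,\gamma}\,\tau^{-(d-1)/(1+\gamma)}\varepsilon^{-1+(d-1)/(1+\gamma)}$.

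Next I would carry out the evaluation at the single point $x\in\{x'=0'\}\cap\Omega$. There $\nabla\bar u_1^\alpha$ has its leading term $\partial_{x_d}\bar u_1^\alpha=\psi_\alpha\delta(0')^{-1}=\psi_\alpha\varepsilon^{-1}$, so among the auxiliary functions only $\alpha=1,\dots,d$ (the translational modes, whose $\psi_\alpha=e_\alpha$ are nonzero at the origin) contribute at the top order, while the rotational modes $\alpha=d+1,\dots,\frac{d(d+1)}{2}$ vanish at $x'=0'$ to leading order and fall into the lower-order remainder. Plugging $x'=0'$ into the expansion of Theorem \ref{ZHthm002} (resp. \ref{ZHthm003}) and using $\delta(0')=\varepsilon$, the remainder term $O(1)\delta^{-(1-\gamma)/(1+\gamma)}\|\varphi\|_{C^0}$ (resp. $O(1)\delta^{-1/(1+\gamma)}\|\varphi\|_{C^0}$) is of strictly smaller order than $\varepsilon^{-1/(1+\gamma)}$ (resp. $\varepsilon^{-1}$), hence absorbed. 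This reduces the matter to: $|\nabla u(0',x_d)|$ is comparable to $\big|\sum_{\alpha=1}^{d}c_\alpha e_\alpha\big|\varepsilon^{-1}$ (in $d\ge3$) with $c_\alpha=\det\mathbb{F}_2^{\ast\alpha}/\det\mathbb{F}^\ast$ up to the $(1+O(\bar\varepsilon(\gamma,d)))$ factors, and similarly in $d=2$ with the extra $\tau^{-1/(1+\gamma)}$-type weights coming from $\mathcal{M}_{\gamma,\tau}$ now replaced by the two-sided bounds $\tau_1^{1/(1+\gamma)}\lesssim 1/\mathcal{M}\lesssim\tau_2^{1/(1+\gamma)}$. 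The upper bound then follows by the triangle inequality, summing $|c_\alpha|$ over $\alpha=1,\dots,d$ and taking the max; the lower bound follows by choosing the index $\alpha_0$ with $\det\mathbb{F}^{\ast\alpha_0}_2\neq0$ (resp. $\det\mathbb{F}^{\ast\alpha_0}_1\neq0$), which exists by hypothesis, and using that the $e_\alpha$ are orthonormal so that the $\alpha_0$-component of the vector already forces $|\nabla u|\gtrsim|c_{\alpha_0}|\varepsilon^{-1}$.

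The one point demanding a little care, and what I would flag as the main obstacle, is the lower bound in $d=2$: there the singular part is $\sum_{\alpha=1}^{2}(C_1^\alpha-C_2^\alpha)\nabla v_1^\alpha$ blowing up like $\varepsilon^{-1/(1+\gamma)}$, and one must make sure that along $\{x'=0'\}$ this leading contribution does not cancel against the $\alpha=3$ (rotational) term, which blows up only at the slower rate $\varepsilon^{-\gamma/(1+\gamma)}$ — here the separation of rates is exactly what saves us, since $\varepsilon^{-\gamma/(1+\gamma)}=o(\varepsilon^{-1/(1+\gamma)})$, so the $\alpha=3$ term and the $\delta^{-(1-\gamma)/(1+\gamma)}$ remainder are both lower order and the norm is governed solely by $\big(\sum_{\alpha=1}^2 c_\alpha^2\big)^{1/2}\varepsilon^{-1/(1+\gamma)}$. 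The second delicate point is bookkeeping the constants: under \eqref{APRHU001} the blow-up factor matrices $\mathbb{F}_0^\ast,\mathbb{F}_1^{\ast\alpha},\mathbb{F}^\ast,\mathbb{F}_2^{\ast\alpha}$ are still defined by the same integrals \eqref{ZWZML001}--\eqref{ZWZML003} and \eqref{GC009} over the \emph{touching} configuration $\Omega^\ast$, which does not see $\varepsilon$ at all, so they remain fixed finite quantities and cause no trouble; one only needs the nondegeneracy $\det\mathbb{F}_0^\ast\neq0$ (resp. $\det\mathbb{F}^\ast\neq0$), which is part of the hypotheses inherited from Theorems \ref{ZHthm002}--\ref{ZHthm003}. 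Finally, the existence of an index $\alpha_0\in\{1,\dots,d\}$ with the relevant determinant nonzero is imposed in the statement, so once the reductions above are in place the two inequalities are immediate, and the constant $C$ is adjusted to absorb the ratios $\tau_2/\tau_1$ and the $1+O(\bar\varepsilon(\gamma,d))$ factors.
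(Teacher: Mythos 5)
Your proposal is correct and follows essentially the same route the paper intends: the paper offers no separate argument for Corollary \ref{PASL001} beyond rerunning the proofs of Theorems \ref{ZHthm002} and \ref{ZHthm003} with the exact evaluation of the diagonal energies $a_{11}^{\alpha\alpha}$ (and the associated $\mathcal{M}_{\gamma,\tau}$-integrals) replaced by two-sided bounds obtained from \eqref{APRHU001} with $\tau_{1},\tau_{2}$, and then reading off the expansion on $\{x'=0'\}$, where $\delta(0')=\varepsilon$, $\nabla\bar{u}_{1}^{\alpha}=e_{\alpha}\otimes e_{d}\varepsilon^{-1}$ for $\alpha\leq d$ and the rotational modes and remainders are of strictly lower order, exactly as you argue. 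The only small inaccuracy is attributional: $\det\mathbb{F}_{0}^{\ast}\neq0$ and $\det\mathbb{F}^{\ast}\neq0$ are not hypotheses of Theorems \ref{ZHthm002}--\ref{ZHthm003} but are proved in Step 1 of the proof of Theorem \ref{OMG123} via ellipticity and the rigid-displacement argument, a proof untouched by weakening ({\bf{H1}}) to \eqref{APRHU001}, so this does not affect your argument.
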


\begin{remark}
We construct the optimal lower bounds on the gradient in Corollary \ref{PASL001} by capturing the blow-up factor matrices, which answers the remaining question in Theorem 1.6 of \cite{CL2019}. Moreover, the gradient estimate results in \cite{CL2019} were improved in virtue of these blow-up factor matrices captured here.
\end{remark}

The rest of this paper is organized as follows. In Section \ref{SEC003}, we decompose the gradient $\nabla u$ into a singular part and a regular part. We then give the proofs of Theorems \ref{ZHthm002} and \ref{ZHthm003} in Section \ref{SEC004}, which mainly consist of the asymptotic expansions of $\nabla v_{1}^{\alpha}$ and $C_{1}^{\alpha}-C_{2}^{\alpha}$, $\alpha=1,2,...,\frac{d(d+1)}{2}$, where $v_{1}^{\alpha}$ is defined in \eqref{qaz001} and the proof of the asymptotic expression of $C_{1}^{\alpha}-C_{2}^{\alpha}$ is left in Section \ref{SEC005}. Section \ref{SEC006} is dedicated to the presentation of Example \ref{coro00389}.

\section{Preliminary}\label{SEC003}

\subsection{Properties of the elasticity tensor $\mathbb{C}^{0}$}
With regard to the isotropic elastic material, let
\begin{align*}
\mathbb{C}^{0}:=(C_{ijkl}^{0})=(\lambda\delta_{ij}\delta_{kl}+\mu(\delta_{ik}\delta_{jl}+\delta_{il}\delta_{jk})),\quad \mu>0,\;d\lambda+2\mu>0.
\end{align*}
Note that the components $C_{ijkl}^{0}$ satisfy the following symmetry property:
\begin{align}\label{symm}
C_{ijkl}^{0}=C_{klij}^{0}=C_{klji}^{0},\quad i,j,k,l=1,2,...,d.
\end{align}
For every pair of $d\times d$ matrices $\mathbb{A}=(a_{ij})$ and $\mathbb{B}=(b_{ij})$, we define
\begin{align*}
(\mathbb{C}^{0}\mathbb{A})_{ij}=\sum_{k,l=1}^{n}C_{ijkl}^{0}a_{kl},\quad\hbox{and}\quad(\mathbb{A},\mathbb{B})\equiv \mathbb{A}:\mathbb{B}=\sum_{i,j=1}^{d}a_{ij}b_{ij}.
\end{align*}
Then
$$(\mathbb{C}^{0}\mathbb{A},\mathbb{B})=(\mathbb{A}, \mathbb{C}^{0}\mathbb{B}).$$
In view of (\ref{symm}), we obtain that the tensor $\mathbb{C}^{0}$ satisfies the ellipticity condition, that is, for every $d\times d$ real symmetric matrix $\xi=(\xi_{ij})$,
\begin{align}\label{ellip}
\min\{2\mu, d\lambda+2\mu\}|\xi|^2\leq(\mathbb{C}^{0}\xi, \xi)\leq\max\{2\mu, d\lambda+2\mu\}|\xi|^2,
\end{align}
where $|\xi|^2=\sum\limits_{ij}\xi_{ij}^2.$ Furthermore,
\begin{align*}
\min\{2\mu, d\lambda+2\mu\}|\mathbb{A}+\mathbb{A}^T|^2\leq(\mathbb{C}(\mathbb{A}+\mathbb{A}^T), (\mathbb{A}+\mathbb{A}^T)).
\end{align*}
In addition, for any open set $O$ and $u, v\in C^2(O;\mathbb{R}^{d})$, we see
\begin{align}\label{Le2.01222}
\int_O(\mathbb{C}^0e(u), e(v))\,dx=-\int_O\left(\mathcal{L}_{\lambda, \mu}u\right)\cdot v+\int_{\partial O}\frac{\partial u}{\partial \nu_0}\Big|_{+}\cdot v.
\end{align}

\subsection{Solution decomposition}\label{sec_thm1}

As shown in \cite{BLL2015,BLL2017}, we decompose the solution $u$ of \eqref{La.002} as follows:
\begin{equation}\label{Decom}
u(x)=\sum_{\alpha=1}^{\frac{d(d+1)}{2}}C_1^{\alpha}v_{1}^{\alpha}(x)+\sum_{\alpha=1}^{\frac{d(d+1)}{2}}C_2^{\alpha}v_2^{\alpha}(x)+v_{0}(x),\quad~x\in\Omega,
\end{equation}
where the constants $C_{i}^{\alpha}$, $i=1,2,\,\alpha=1,2,...,\frac{d(d+1)}{2}$ are free constants to be determined by the fourth line of \eqref{La.002}, $v_{0}$ and $v_{i}^{\alpha}\in{C}^{2}(\Omega;\mathbb{R}^d)$, $i=1,2$, $\alpha=1,2,...,\frac{d(d+1)}{2}$, respectively, satisfy
\begin{equation}\label{qaz001}
\begin{cases}
\mathcal{L}_{\lambda,\mu}v_{0}=0,&\mathrm{in}~\Omega,\\
v_{0}=0,&\mathrm{on}~\partial{D}_{1}\cup\partial{D_{2}},\\
v_{0}=\varphi,&\mathrm{on}~\partial{D},
\end{cases}\quad
\begin{cases}
\mathcal{L}_{\lambda,\mu}v_{i}^{\alpha}=0,&\mathrm{in}~\Omega,\\
v_{i}^{\alpha}=\psi^{\alpha},&\mathrm{on}~\partial{D}_{i},~i=1,2,\\
v_{i}^{\alpha}=0,&\mathrm{on}~\partial{D_{j}}\cup\partial{D},~j\neq i.
\end{cases}
\end{equation}
From \eqref{Decom}, we see
\begin{align}\label{Decom002}
\nabla{u}=&\sum_{\alpha=1}^{\frac{d(d+1)}{2}}(C_{1}^\alpha-C_{2}^\alpha)\nabla{v}_{1}^\alpha+\sum_{\alpha=1}^{\frac{d(d+1)}{2}}C_{2}^\alpha\nabla({v}_{1}^\alpha+{v}_{2}^\alpha)+\nabla{v}_{0}.
\end{align}
In light of \eqref{Decom002}, we decompose $\nabla u$ into two parts as follows: the first part $\sum_{\alpha=1}^{\frac{d(d+1)}{2}}(C_{1}^\alpha-C_{2}^\alpha)\nabla{v}_{1}^\alpha$ is the singular part and blows up; the other part $\sum_{\alpha=1}^{\frac{d(d+1)}{2}}C_{2}^\alpha\nabla({v}_{1}^\alpha+{v}_{2}^\alpha)+\nabla{v}_{0}$ is the regular part and possesses exponentially decaying property in the shortest segment between two inclusions. The precise statements for these results are given in the following sections.

\subsection{A general boundary value problem}

To begin with, for two given vector-valued functions $\psi\in C^{1,\gamma}(\partial D_{1};\mathbb{R}^{d})$ and $\phi\in C^{1,\gamma}(\partial D_{2};\mathbb{R}^{d})$, we consider the general boundary value problem as follows:
\begin{equation}\label{P2.008}
\begin{cases}
\mathcal{L}_{\lambda,\mu}v:=\nabla\cdot(\mathbb{C}^{0}e(v))=0,\quad\;\,&\mathrm{in}\;\,\Omega,\\
v=\psi(x),&\mathrm{on}\;\,\partial D_{1},\\
v=\phi(x),&\mathrm{on}\;\,\partial D_{2},\\
v=0,&\mathrm{on}\;\,\partial D.
\end{cases}
\end{equation}

Define a vector-valued auxiliary function as follows:
\begin{align}\label{CAN01}
\tilde{v}=&\psi(x',\varepsilon+h_{1}(x'))\bar{v}+\phi(x',h_{2}(x'))(1-\bar{v}),\quad \mathrm{in}\;\Omega_{2R},
\end{align}
where $\bar{v}$ is defined by \eqref{zh001}. Denote
\begin{align}\label{ANC001}
\mathcal{R}_{\delta}(\psi,\phi):=&\delta^{-\frac{1}{1+\gamma}}|\psi(x',\varepsilon+h_{1}(x'))-\phi(x',h_{2}(x'))|\notag\\
&+\|\psi\|_{C^{1}(\partial D_{1})}+\|\phi\|_{C^{1}(\partial D_{2})},
\end{align}
where $\delta$ is defined in \eqref{deta}.

\begin{theorem}\label{thm8698}
Assume as above. Let $v$ be the weak solution of \eqref{P2.008}. Then for a sufficiently small $\varepsilon>0$,
\begin{align*}
\nabla v=\nabla\tilde{v}+O(1)\mathcal{R}_{\delta}(\psi,\phi),
\end{align*}
where $\delta$ is defined in \eqref{deta}, the leading term $\tilde{v}$ is defined by \eqref{CAN01}, the residual part $\mathcal{R}_{\delta}(\psi,\phi)$ is defined by \eqref{ANC001}.
\end{theorem}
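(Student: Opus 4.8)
The plan is to estimate $w := v - \tilde v$ in the narrow region $\Omega_{2R}$ by an energy method, reducing matters to controlling the equation $\mathcal{L}_{\lambda,\mu}\tilde v$ that $\tilde v$ fails to satisfy, and then to propagate the interior $L^\infty$ gradient bound into the neck via the iteration technique adapted to $C^{1,\gamma}$ boundaries (as in Chen--Li and in \cite{BLL2015}). First I would observe that $\tilde v = \psi\,\bar v + \phi\,(1-\bar v)$ matches $v$ on $\partial D_1\cup\partial D_2\cup\partial D$ (using $\bar v=1$ on $\partial D_1$, $\bar v=0$ on $\partial D_2\cup\partial D$), so $w$ solves $\mathcal{L}_{\lambda,\mu}w = -\mathcal{L}_{\lambda,\mu}\tilde v$ in $\Omega$ with zero boundary data on $\partial D_1\cup\partial D_2\cup\partial D$. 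The key pointwise computation is to differentiate \eqref{CAN01} twice: writing $\psi_h(x') := \psi(x',\varepsilon+h_1(x'))$ and $\phi_h(x') := \phi(x',h_2(x'))$, one has $\tilde v = \phi_h + (\psi_h-\phi_h)\bar v$, and $\bar v = (x_d - h_2(x'))/\delta(x')$. One then checks, using conditions (H1)--(H3) and the definition \eqref{deta} of $\delta$, the bounds
\begin{align*}
|\nabla \tilde v| &\leq C\,\delta^{-1}|\psi_h-\phi_h| + C\big(\|\psi\|_{C^1} + \|\phi\|_{C^1}\big) \leq C\,\delta^{-\frac{\gamma}{1+\gamma}}\,\mathcal{R}_\delta(\psi,\phi),\\
|\mathcal{L}_{\lambda,\mu}\tilde v| &\leq C\,\delta^{-2}|\psi_h-\phi_h|\,|x'|^{2\gamma} + C\,\delta^{-1}|x'|^{\gamma}\big(\|\psi\|_{C^1}+\|\phi\|_{C^1}\big) + (\text{lower order}),
\end{align*}
where the $|x'|^\gamma$ gains come from $|\nabla_{x'}h_i|\le\kappa_1|x'|^\gamma$ and from the cancellation $h_1-h_2=\tau|x'|^{1+\gamma}+\cdots$ in the terms involving $\partial_{x'}\delta$; crucially $\delta^{-2}|x'|^{2\gamma}\sim \delta^{-\frac{2}{1+\gamma}}\cdot(\text{bounded})$ since $|x'|^{1+\gamma}\lesssim\delta$ near the origin.

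The second step is the energy estimate. Following the scheme in \cite{BLL2015,CL2019}, I would test the equation for $w$ against $w$ itself, obtaining $\int_\Omega (\mathbb{C}^0 e(w), e(w)) = -\int_\Omega (\mathcal{L}_{\lambda,\mu}\tilde v)\cdot w$, handle the right side by integration by parts \eqref{Le2.01222} to move it onto $\nabla w$, and then run the iteration over the dyadic sub-cylinders $\Omega_t(z')\setminus\Omega_{t/2}(z')$ for $|z'|\le R$, controlling $\int_{\Omega_t}|\nabla w|^2$ by a recursion in $t$ that, combined with the $C^{1,\gamma}$ rescaling (mapping $\Omega_t$ to a domain of unit size), yields $\|\nabla w\|_{L^\infty(\Omega_R\cap\{|x'|\le r\})} \le C\,\mathcal{R}_\delta(\psi,\phi)$ uniformly; here one uses the Poincaré inequality $\int_{\Omega_t(z')}|w|^2 \le C\,(\delta(z'))^2\int_{\Omega_t(z')}|\nabla w|^2$ valid because $w$ vanishes on the top and bottom of the neck, which absorbs the apparently singular weights in $\mathcal{L}_{\lambda,\mu}\tilde v$. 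Away from the neck, $|\nabla w|$ in $\Omega\setminus\Omega_R$ is bounded by the standard elliptic estimate together with $\|\varphi\|=0$ on $\partial D$ for this auxiliary problem, so $|\nabla w|\le C(\|\psi\|_{C^1}+\|\phi\|_{C^1})\le C\,\mathcal{R}_\delta(\psi,\phi)$ there as well. Combining the neck estimate with the pointwise bound on $\nabla\tilde v$ and $\nabla v = \nabla\tilde v + \nabla w$ gives the claimed $\nabla v = \nabla\tilde v + O(1)\mathcal{R}_\delta(\psi,\phi)$.

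I expect the main obstacle to be the iteration estimate in the neck when the inclusions are only $C^{1,\gamma}$ rather than $C^{2,\gamma}$: the rescaled domains $\Omega_t$ are not smooth uniformly in $t$, so one cannot invoke $C^{1,\alpha}$ Schauder estimates directly and must instead use the $W^{1,p}$ theory for divergence-form elliptic systems with right-hand side in divergence form together with Campanato-type iteration — precisely the device Chen--Li introduced. The delicate point is bookkeeping the powers of $\delta$ and $|x'|$ through each rescaling step so that the singular factor $\delta^{-2}|x'|^{2\gamma}$ in $\mathcal{L}_{\lambda,\mu}\tilde v$ is exactly compensated and no loss accumulates over the (logarithmically many) iteration steps; this is where conditions (H1)--(H3), and in particular the specific exponent $1+\gamma$ and the evenness of $h_1-h_2$, enter in an essential way. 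A secondary technical nuisance is that $e(w)=0$ does not hold inside $D_1\cup D_2$ a priori for this sub-problem (unlike \eqref{La.002}), but since \eqref{P2.008} is posed only on $\Omega$ with Dirichlet data, this is not actually needed; one only works in $\Omega$.
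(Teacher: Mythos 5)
Your overall skeleton does coincide with the paper's: set $w=v-\tilde v$ (the paper additionally splits $v=\sum_i v_i$ componentwise), prove a global energy bound, run a Caccioppoli-type iteration on the cylinders $\Omega_t(z')$ using the thin-direction Poincar\'e inequality, then rescale $\Omega_\delta(z')$ to unit size and invoke the $C^{1,\gamma}$ and $W^{1,p}$ estimates with partially vanishing boundary data (Lemmas \ref{CL001}--\ref{CL002}). Your skipping of the paper's careful Step 1 (the integration by parts exploiting $\partial_{dd}\bar v=0$) is benign, since the $4^{-k}$ prefactor in the iteration tolerates a crude, $\varepsilon$-dependent global energy bound. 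But your ``key pointwise computation'' of $|\mathcal{L}_{\lambda,\mu}\tilde v|$ is not legitimate under ({\bf H2})--({\bf H3}): the tangential second derivatives of $\bar v$ and of $\psi(x',\varepsilon+h_1(x'))$ involve $\nabla^2_{x'}h_i$, and $h_i$ is only $C^{1,\gamma}$, so $\mathcal{L}_{\lambda,\mu}\tilde v$ exists only as the divergence of the bounded field $\mathbb{C}^0e(\tilde v)$. The paper never differentiates $\tilde v$ twice; it writes $-\mathcal{L}_{\lambda,\mu}w_i=\nabla\cdot(\mathbb{C}^0e(\tilde v_i)-\mathcal{M})$ for a free constant matrix $\mathcal{M}$ (see \eqref{ZWZW001}) and only ever uses $\nabla\tilde v_i$ and its H\"older seminorm.

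The more serious gap is quantitative. The closing mechanism you describe --- square-integrate the singular right-hand side and absorb it with the Poincar\'e inequality --- does not produce the rate $\delta^{-\frac{1}{1+\gamma}}$. Feeding the crude size $|e(\tilde v)|\lesssim\delta^{-1}|\psi_h-\phi_h|+\|\psi\|_{C^1}+\|\phi\|_{C^1}$ into the Caccioppoli iteration gives only $\int_{\Omega_\delta(z')}|\nabla w|^2\lesssim\delta^{d-2}|\psi_h-\phi_h|^2+\cdots$, and the rescaling step then yields $|\nabla w|\lesssim\delta^{-1}|\psi_h-\phi_h|+\cdots$, i.e.\ an error of the same order as $\nabla\tilde v$ itself, so the asserted expansion becomes vacuous. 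The decisive device in the paper is the choice $\mathcal{M}=\frac{1}{|\Omega_s(z')|}\int_{\Omega_s(z')}\mathbb{C}^0e(\tilde v_i)$ combined with the oscillation estimate \eqref{QWN002}, namely $[\nabla\tilde v_i]_{\gamma,\Omega_s(z')}\lesssim\big(|\psi^i(z',\varepsilon+h_1(z'))|\,\delta^{-\frac{2+\gamma}{1+\gamma}}+\|\psi^i\|_{C^1}\delta^{-1}\big)s^{1-\gamma}$, whose gain of $s^{1-\gamma}$ is what leads to \eqref{step2}, $\int_{\Omega_\delta(z')}|\nabla w_i|^2\lesssim\delta^{d-\frac{2}{1+\gamma}}(\cdots)$, exactly the input the final $L^\infty$ bound $|\nabla w_i|\lesssim\delta^{-\frac{1}{1+\gamma}}|\psi^i(x',\varepsilon+h_1(x'))|+\|\psi^i\|_{C^1}$ requires. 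You name Chen--Li's Campanato-type device in your last paragraph, but your outlined estimates never actually deploy this mean subtraction, and without it the argument as written falls short of the statement.
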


For the purpose of proving Theorem \ref{thm8698}, we will utilize the adapted version of the iterate technique developed in \cite{CL2019}. To begin with, we recall the following two lemmas, which are Theorem 2.3 and Theorem 2.4 in \cite{CL2019}. For the sake of readability and presentation, in this section we write $\partial_{j}:=\partial_{x_{j}}$, $j=1,2,...,d$. Let $Q\subset\mathbb{R}^{d}\,(d\geq2)$ be a bounded domain with $C^{1,\gamma}\,(0<\gamma<1)$ boundary portion $\Gamma\subset\partial Q$. The boundary value problem is described as follows:
\begin{align}\label{ADCo1}
\begin{cases}
-\partial_{j}(C_{ijkl}^{0}\partial_{l}w^{k})=\partial_{j}f_{ij},&in\; Q,\\
w=0,&on\;\Gamma,
\end{cases}
\end{align}
where $f_{ij}\in C^{0,\gamma}(Q)$, $i,j,k,l=1,2,...,d$, and the Einstein summation convention in repeated indices is used.
\begin{lemma}\label{CL001}{$\mathrm{(}$$C^{1,\gamma}$ estimates$\mathrm{)}$.}
Let $w\in H^{1}(Q;\mathbb{R}^{d})\cap C^{1}(Q\cup\Gamma;\mathbb{R}^{d})$ be the solution of \eqref{ADCo1}. Then for any subdomain $Q'\subset\subset Q\cup\Gamma$,
\begin{align}\label{GNA001}
\|w\|_{C^{1,\gamma}(Q')}\leq C\left(\|w\|_{L^{\infty}(Q)}+[F]_{\alpha,Q}\right),
\end{align}
where $F:=(f_{ij})$ and $C=C(d,\gamma,Q',Q)$.
\end{lemma}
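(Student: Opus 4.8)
The plan is to prove \eqref{GNA001} by the De Giorgi--Campanato--Schauder scheme for constant-coefficient strongly elliptic systems with divergence-form right-hand side; this is the argument underlying Theorem 2.3 of \cite{CL2019}, which I sketch here for completeness. All the estimates below are genuine a priori bounds: since $w\in H^{1}(Q;\mathbb R^{d})\cap C^{1}(Q\cup\Gamma;\mathbb R^{d})$ is already known, every integration by parts is legitimate, and the whole argument is local, so it suffices to bound $[\nabla w]_{C^{0,\gamma}}$ and $\|\nabla w\|_{L^{\infty}}$ on a ball $B_{r}(x_{0})$ (an interior ball when $x_{0}\in Q$, a half-ball adapted to $\Gamma$ when $x_{0}\in\Gamma$, using the homogeneous Dirichlet condition there), uniformly for $x_{0}\in Q'$.

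First I would record the two local ingredients. (i) A Caccioppoli inequality: testing \eqref{ADCo1} with $\eta^{2}(w-Px)$, where $P$ is an arbitrary constant matrix and $\eta$ a cutoff, and using the ellipticity \eqref{ellip} (together with Korn's inequality in the boundary case), gives, for any $B_{2r}(x_{0})\subset\subset Q\cup\Gamma$,
$$\int_{B_{r}(x_{0})}|\nabla w|^{2}\,dx\le\frac{C}{r^{2}}\int_{B_{2r}(x_{0})}|w-Px|^{2}\,dx+C\int_{B_{2r}(x_{0})}|F-(F)_{B_{2r}}|^{2}\,dx,$$
the key point being that one subtracts both a linear function and the average of $F$, so only $[F]_{\gamma,Q}$ (not $\|F\|_{L^{\infty}}$) appears after integration on fixed-size balls; this already yields $\|\nabla w\|_{L^{2}(Q')}\le C(\|w\|_{L^{\infty}(Q)}+[F]_{\gamma,Q})$. (ii) A decay estimate for the homogeneous system: if $\mathcal L_{\lambda,\mu}h=0$ in $B_{r}(x_{0})$ (with $h=0$ on the flat part of the boundary in the boundary case), then interior/boundary $L^{2}$ estimates for derivatives of solutions of constant-coefficient strongly elliptic systems give, for $0<\rho\le r$,
$$\int_{B_{\rho}(x_{0})}|\nabla h-(\nabla h)_{B_{\rho}}|^{2}\,dx\le C\Big(\frac{\rho}{r}\Big)^{d+2}\int_{B_{r}(x_{0})}|\nabla h-(\nabla h)_{B_{r}}|^{2}\,dx.$$

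Next I would freeze coefficients and compare. On $B_{2r}(x_{0})$ write $w=h+\tilde w$, where $h$ solves the homogeneous problem with the boundary data of $w$ and $\tilde w$ has zero boundary data and right-hand side $\partial_{j}\big(f_{ij}-(f_{ij})_{B_{2r}}\big)$; the energy estimate gives $\int_{B_{2r}}|\nabla\tilde w|^{2}\le C\int_{B_{2r}}|F-(F)_{B_{2r}}|^{2}\le C[F]_{\gamma,Q}^{2}\,r^{d+2\gamma}$. Combining this with (ii) and the standard iteration lemma (Giaquinta, Ch.~III, Lemma~2.1), valid because $d+2\gamma<d+2$, produces the Campanato decay
$$\int_{B_{\rho}(x_{0})}|\nabla w-(\nabla w)_{B_{\rho}}|^{2}\,dx\le C\rho^{d+2\gamma}\big(\|\nabla w\|_{L^{2}(Q)}^{2}+[F]_{\gamma,Q}^{2}\big),$$
uniformly for $x_{0}\in Q'$ and small $\rho$. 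By Campanato's embedding this is equivalent to $[\nabla w]_{C^{0,\gamma}(Q')}\le C(\|\nabla w\|_{L^{2}(Q)}+[F]_{\gamma,Q})$, and combined with the $L^{2}$ bound on $\nabla w$ from (i) and with $\|w\|_{L^{\infty}(Q)}$ this yields \eqref{GNA001} with $C=C(d,\gamma,Q',Q)$.

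The main obstacle is the analysis near the boundary portion $\Gamma$. Since $\Gamma$ is only $C^{1,\gamma}$, the flattening diffeomorphism is $C^{1,\gamma}$ and transforms the constant-coefficient operator into one with merely $C^{0,\gamma}$ coefficients together with an additional divergence-form $C^{0,\gamma}$ right-hand side. One must therefore run the freezing-coefficient argument at boundary points as well, splitting off the solution of the constant-coefficient flat-boundary problem and absorbing the oscillation of the transformed coefficients — which is $O(r^{\gamma})$ precisely because $\Gamma\in C^{1,\gamma}$ rather than merely $C^{1}$ — into the perturbation term so that it carries the decay rate $r^{d+2\gamma}$ needed for the iteration lemma. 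Finally one has to glue the interior and boundary Campanato estimates on overlapping balls covering $Q'$ and check that all constants depend only on $d$, $\gamma$, and the geometry of $Q'\subset\subset Q\cup\Gamma$; this completes the proof.
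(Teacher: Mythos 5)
Your overall strategy coincides with the paper's: this is the Campanato approach, with the interior estimate coming from the constant-coefficient model result (the paper simply quotes Theorem 5.14 of \cite{GM2013} and its consequence, Corollary \ref{CO001}, on balls), and the boundary portion handled by flattening $\Gamma$ with a $C^{1,\gamma}$ homeomorphism, freezing the resulting $C^{0,\gamma}$ coefficients, and finishing with a finite covering and interpolation. So the route is essentially the same; the only real difference is where the perturbation created by the frozen coefficients gets absorbed, and that is exactly the place where your sketch, as written, does not close.

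Concretely, you claim the coefficient-oscillation term can be fed into the iteration lemma ``so that it carries the decay rate $r^{d+2\gamma}$''. After flattening, the comparison on a half-ball produces the term $\int_{\mathcal{B}^{+}_{2r}}\big|(\mathcal{C}^{0}(y)-\mathcal{C}^{0}(y_{0}))\nabla\mathcal{W}\big|^{2}\leq Cr^{2\gamma}\int_{\mathcal{B}^{+}_{2r}}|\nabla\mathcal{W}|^{2}$, and with only the energy bound $\nabla\mathcal{W}\in L^{2}$ this is of size $r^{2\gamma}$, not $r^{d+2\gamma}$: the missing factor $r^{d}$ is a Morrey bound on $\nabla w$ that you do not have at the start, so the iteration lemma does not yet yield the exponent $d+2\gamma$. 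Two standard repairs are available. Either bootstrap the Morrey exponent first (show $\nabla w\in L^{2,\mu}$ for every $\mu<d$, then $\mu=d$, and only then run the Campanato decay to $d+2\gamma$), or do what the paper does: keep the commutator at the level of H\"older semi-norms, bound it by $C\big(R^{\gamma}[\nabla\mathcal{W}]_{\gamma}+\|\nabla\mathcal{W}\|_{L^{\infty}}\big)$, use the interpolation inequality $\|\nabla\mathcal{W}\|_{L^{\infty}}\leq R^{\gamma}[\nabla\mathcal{W}]_{\gamma}+CR^{-1}\|\mathcal{W}\|_{L^{\infty}}$, and then absorb the term $R^{\gamma}[\nabla w]_{\gamma,Q'}$ through the finite-covering argument with a radius chosen uniformly small (the qualitative hypothesis $w\in C^{1}(Q\cup\Gamma)$ is what guarantees that the semi-norm being absorbed is finite). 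With either fix your argument closes and gives \eqref{GNA001} with $C=C(d,\gamma,Q',Q)$; without one of them, the step ``the perturbation carries $r^{d+2\gamma}$'' is unjustified.
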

The H\"{o}lder semi-norm of matrix-valued function $F=(f_{ij})$ is defined as follows:
\begin{align*}
[F]_{\gamma,Q}:=\max_{1\leq i,j\leq d}[f_{ij}]_{\gamma,Q}\quad\mathrm{and}\quad[f_{ij}]_{\gamma,Q}=\sup_{x,y\in Q,x\neq y}\frac{|f_{ij}(x)-f_{ij}(y)|}{|x-y|^{\gamma}}.
\end{align*}

\begin{lemma}\label{CL002}{$\mathrm{(}$$W^{1,p}$ estimates$\mathrm{)}$.}
Assume that $Q$ and $\Gamma$ are defined as in Lemma \ref{CL001}. Let $w\in H^{1}(Q;\mathbb{R}^{d})$ be the weak solution of \eqref{ADCo1} with $f_{ij}\in C^{0,\gamma}(Q)$, $0<\gamma<1$ and $i,j=1,2,...,d$. Then, for any $2\leq p<\infty$ and $Q'\subset\subset Q\cup\Gamma$,
\begin{align}\label{LNZ001}
\|w\|_{W^{1,p}(Q')}\leq C(\|w\|_{H^{1}(Q)}+\|F\|_{L^{p}(Q)}),
\end{align}
where $C=C(\lambda,\mu,p,Q')$ and $F:=(f_{i}^{k})$. In particular, if $p>d$, we have
\begin{align}\label{LNZ002}
\|w\|_{C^{0,\bar{\gamma}}(Q')}\leq C(\|w\|_{H^{1}(Q)}+[F]_{\alpha,Q}),
\end{align}
where $0<\bar{\gamma}\leq1-d/p$ and $C=C(\lambda,\mu,\bar{\gamma},p,Q')$.
\end{lemma}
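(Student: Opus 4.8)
\emph{Proof strategy.} The plan is to derive the $W^{1,p}$ bound \eqref{LNZ001} by the classical localization--freezing scheme for the strongly elliptic constant-coefficient Lam\'e operator, iterating in the integrability exponent, and then to read off the H\"older estimate \eqref{LNZ002} from Morrey's embedding. Throughout, the estimates are first proved a priori for smooth $F$ (so that $w$ is smooth and every norm below is finite), with constants independent of the regularization, and then extended to the stated class by approximation; this is what makes the perturbation step in Step~2 rigorous.

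\emph{Step 1 (Localization).} Fix $Q'\subset\subset Q''\subset\subset Q\cup\Gamma$ and cover $\overline{Q'}$ by finitely many balls $B_r(x_0)$ with $B_{2r}(x_0)\subset Q''$, separated into interior balls ($B_{2r}(x_0)\subset Q$) and boundary balls ($x_0\in\Gamma$). Using a subordinate partition of unity and a standard cut-off argument, it suffices to bound $\|\nabla w\|_{L^q(B_r(x_0))}$ by $C\big(\|w\|_{L^q(B_{2r}(x_0))}+\|F\|_{L^q(B_{2r}(x_0))}\big)$ for any $2\le q<\infty$ in the two model configurations (in the boundary case the balls being intersected with $Q$), the cut-off producing only terms of this same form. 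For a boundary ball, a rotation turns $\Gamma\cap B_{2r}(x_0)$ into the graph $\{x_d=\phi(x')\}$ of a $C^{1,\gamma}$ function with $\nabla\phi(x_0')=0$, and the $C^{1,\gamma}$ map $y=(x',x_d-\phi(x'))$ flattens it; \eqref{ADCo1} becomes a divergence-form system $-\partial_j(\tilde C^{ijkl}(y)\partial_l\tilde w^k)=\partial_j\tilde f_{ij}$ on a half-ball with $\tilde w=0$ on the flat face, where $\tilde C^{ijkl}\in C^{0,\gamma}$, $\tilde C^{ijkl}(0)=C^0_{ijkl}$, and $\|\tilde F\|_{L^q}$ is comparable to $\|F\|_{L^q}$. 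Hence both cases reduce to a divergence-form strongly elliptic system with H\"older coefficients on a ball or half-ball with zero Dirichlet data on the flat part.

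\emph{Step 2 (Frozen estimate, perturbation, bootstrap).} Because $\mu>0$ and $d\lambda+2\mu>0$, the tensor $\mathbb{C}^0$ is strongly elliptic, so $\mathcal{L}_{\lambda,\mu}$ possesses a Kelvin fundamental matrix whose second derivatives are Calder\'on--Zygmund kernels; together with the interior energy estimate for the homogeneous system (and, on a half-ball, the half-space Green matrix, equivalently the divergence-form $L^q$ estimates of Agmon--Douglis--Nirenberg \cite{ADN1959,ADN1964}) this gives the frozen bound $\|\nabla w\|_{L^q(B_r)}\le C(\|w\|_{L^q(B_{2r})}+\|F\|_{L^q(B_{2r})})$ for the operator with coefficients frozen at one point. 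To pass to $\tilde C(y)$, rewrite the system as $-\partial_j(C^0_{ijkl}\partial_l\tilde w^k)=\partial_j\big(\tilde f_{ij}+(\tilde C_{ijkl}-C^0_{ijkl})\partial_l\tilde w^k\big)$; since $\tilde C$ is continuous, on a ball of small radius $\rho$ one has $\|\tilde C-C^0\|_{L^\infty}\le C\rho^\gamma$, so the extra term is $\le C\rho^\gamma\|\nabla\tilde w\|_{L^q}$ and is absorbed into the left side once $\rho$ is small. Covering $Q'$ by such small balls yields $\|w\|_{W^{1,q}(Q')}\le C(\|w\|_{L^q(Q'')}+\|F\|_{L^q(Q)})$. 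One now bootstraps along a finite chain of shrinking domains starting from $q=2$, where $\|w\|_{L^2(Q'')}\le\|w\|_{H^1(Q)}$: if $w\in W^{1,q_k}_{\mathrm{loc}}$, Sobolev embedding gives $w\in L^{q_{k+1}}_{\mathrm{loc}}$ with $q_{k+1}=dq_k/(d-q_k)$ while $q_k<d$ (and $q_{k+1}$ arbitrary once $q_k\ge d$), and feeding this back, using $\|F\|_{L^{q_{k+1}}(Q)}\le C\|F\|_{L^p(Q)}$ for $q_{k+1}\le p$, yields $w\in W^{1,q_{k+1}}_{\mathrm{loc}}$; after finitely many steps one reaches $p$ and obtains \eqref{LNZ001}. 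When $p>d$, Morrey's embedding $W^{1,p}(Q')\hookrightarrow C^{0,1-d/p}(Q')$ and \eqref{LNZ001} give $\|w\|_{C^{0,\bar\gamma}(Q')}\le C(\|w\|_{H^1(Q)}+\|F\|_{L^p(Q)})$; finally, since $w$ depends on $F$ only through $\partial_jf_{ij}$, one may replace $F$ by $F-\tfrac{1}{|Q|}\int_Q F$ without changing $w$, and $\big\|F-\tfrac{1}{|Q|}\int_Q F\big\|_{L^p(Q)}\le C[F]_{\gamma,Q}$, which gives \eqref{LNZ002}.

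\emph{Main obstacle.} The crux is the freezing/absorption in Step~2: it presupposes $\nabla w\in L^q_{\mathrm{loc}}$, so the whole argument must first be carried out for smooth data and then closed by approximation with uniform constants; alternatively one bypasses this by a Caffarelli--Peral maximal-function and good-$\lambda$ argument that produces the higher integrability of $\nabla w$ directly from $\nabla w\in L^2_{\mathrm{loc}}$. The merely $C^{1,\gamma}$ regularity of $\Gamma$ --- which is precisely what forces the flattened coefficients to be only H\"older, hence the perturbative treatment rather than a direct reflection argument --- is the principal technical burden; the vectorial (Lam\'e) character of the system enters only through the requirement that it be strongly elliptic, which holds under the standing hypotheses $\mu>0$, $d\lambda+2\mu>0$.
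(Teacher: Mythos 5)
Your proposal is correct and follows essentially the same route as the paper's proof: localize with cut-offs, flatten $\Gamma$ by a $C^{1,\gamma}$ chart so the boundary case becomes a H\"older-coefficient divergence-form system on a half-ball with zero data on the flat face, bootstrap the integrability exponent from $q=2$ up to $p$ via Sobolev embedding, and deduce \eqref{LNZ002} from Morrey's embedding after replacing $F$ by $F$ minus a constant matrix. The only mechanical differences are that the paper converts the lower-order cut-off terms into divergence form through an auxiliary Poisson equation $-\Delta v^{i}=G_{i}$ and invokes the constant/variable-coefficient $W^{1,p}$ theorems of \cite{GM2013} (Theorems 7.1 and 7.2) for the core estimates, whereas you rebuild the frozen-coefficient bound from Calder\'on--Zygmund/ADN theory, perform the freezing and absorption explicitly, and address the a priori finiteness of $\|\nabla w\|_{L^{q}_{\mathrm{loc}}}$ (by approximation or a good-$\lambda$ argument) that the paper leaves implicit in its citation.
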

\begin{remark}
We would like to emphasize that the results in Lemmas \ref{CL001} and \ref{CL002} improve the classical $C^{1,\gamma}$ estimates and $W^{1,p}$ estimates of \cite{GM2013} in the setting with partially zero boundary data, which is vitally important to build the following iteration scheme.
\end{remark}
For readers' convenience, we leave the detailed proofs of Lemmas \ref{CL001} and \ref{CL002} in the Appendix.
\begin{proof}[The proof of Theorem \ref{thm8698}]

Without loss of generality, we let $\phi=0$ on $\partial D_{2}$ in (\ref{P2.008}). To begin with, we decompose the solution $v$ of (\ref{P2.008}) as follows:
$$v=\sum^{d}_{i=1}v_{i},$$
where $v_{i}=(v_{i}^{1},v_{i}^{2},...,v_{i}^{d})^{T}$, $i=1,2,...,d$, with $v_{i}^{j}=0$ for $j\neq i$, and $v_{i}$ verifies the following boundary value problem
\begin{align*}
\begin{cases}
  \mathcal{L}_{\lambda,\mu}v_{i}:=\nabla\cdot(\mathbb{C}^0e(v_{i}))=0,\quad&
\hbox{in}\  \Omega,  \\
v_{i}=( 0,...,0,\psi^{i}, 0,...,0)^{T},\ &\hbox{on}\ \partial{D}_{1},\\
v_{i}=0,&\hbox{on} \ \partial{D}.
\end{cases}
\end{align*}
Then we have
\begin{align*}
\nabla v=\sum^{d}_{i=1}\nabla v_{i}.
\end{align*}

Extend $\psi\in C^{1,\gamma}(\partial D_{1};\mathbb{R}^{d})$ to $\psi\in C^{1,\gamma}(\overline{\Omega};\mathbb{R}^{d})$, which verifies that $\|\psi^{i}\|_{C^{1,\gamma}(\overline{\Omega\setminus\Omega_{R}})}\leq C\|\psi^{i}\|_{C^{1,\gamma}(\partial D_{1})}$, $i=1,2,...,d.$ Let $\rho\in C^{1,\gamma}(\overline{\Omega})$ be a smooth cutoff function satisfying that $0\leq\rho\leq1$, $|\nabla\rho|\leq C$ in $\overline{\Omega}$, and
\begin{align}\label{Le2.021}
\rho=1\;\,\mathrm{in}\;\,\Omega_{\frac{3}{2}R},\quad\rho=0\;\,\mathrm{in}\;\,\overline{\Omega}\setminus\Omega_{2R}.
\end{align}
For $i=1,2,...,d$, define
\begin{align*}
\tilde{v}_{i}(x)=\left(0,...,0,[\rho(x)\psi^{i}(x',\varepsilon+h_{1}(x'))+(1-\rho(x))\psi^{i}(x)]\bar{v}(x),0,...,0\right)^{T},\quad x\in\Omega.
\end{align*}
In particular,
\begin{align*}
\tilde{v}_{i}(x)=(0,...,0,\psi^{i}(x',\varepsilon+h_{1}(x'))\bar{v}(x),0,...,0)^{T},\quad\;\,\mathrm{in}\;\Omega_{R}.
\end{align*}
In light of \eqref{Le2.021}, we derive
\begin{align*}
\|\tilde{v}_{i}\|_{C^{1}(\Omega\setminus\Omega_{R})}\leq C\|\psi^{i}\|_{C^{1}(\partial D_{1})},\quad i=1,2,...,d.
\end{align*}

Write
\begin{equation*}
w_i:=v_i-\tilde{v}_i,\quad i=1,2,...,d.
\end{equation*}
Then $w_{i}$ satisfies
\begin{align}\label{Zww01}
\begin{cases}
\mathcal{L}_{\lambda,\mu}w_{i}=-\mathcal{L}_{\lambda,\mu}\tilde{v}_{i},&
\hbox{in}\;\Omega,  \\
w_{i}=0, \quad&\hbox{on}\;\partial\Omega.
\end{cases}
\end{align}
Observe that $w_{i}$ also verifies that for any constant matrix $\mathcal{M}=(\mathfrak{a}_{ij})$,
\begin{align}\label{ZWZW001}
-\mathcal{L}_{\lambda, \mu}w_{i}=\nabla\cdot(\mathbb{C}^0e(\tilde{v}_{i})-\mathcal{M}),\quad&\hbox{in}\ \Omega.
\end{align}

We now divide into three parts to prove Theorem \ref{thm8698}. For simplicity, we utilize $\|\psi^{i}\|_{C^{1}}$ to denote $\|\psi^{i}\|_{C^{1}(\partial D_{1})}$ in the following.

{\bf Step 1.} Proof of
\begin{align}\label{zzwad01}
\|\nabla w_{i}\|_{L^{2}(\Omega)}\leq C\|\psi^{i}\|_{C^{1}},\quad i=1,2,...,d.
\end{align}
In view of \eqref{Zww01}, we know
\begin{align}\label{OAP001}
\int_{\Omega}(\mathbb{C}^{0}e(w_{i}),e(w_{i}))\,dx=-\int_{\Omega}(\mathbb{C}^{0}e(\tilde{v}_{i}),e(w_{i}))\,dx.
\end{align}
On one hand, it follows from \eqref{ellip} and the first Korn's inequality that
\begin{equation}\label{QAT001}
\int_{\Omega}(\mathbb{C}^{0}e(w_{i}),e(w_{i}))\,dx\geq\frac{1}{C}\int_{\Omega}|e(w_{i})|^{2}dx\geq\frac{1}{C}\int_{\Omega}|\nabla w_{i}|^{2}dx.
\end{equation}

On the other hand, we first decompose the right hand of \eqref{OAP001} into two parts as follows:
\begin{align*}
\mathrm{I}=-\int_{\Omega\setminus\Omega_{R}}(\mathbb{C}^{0}e(\tilde{v}_{i}),e(w_{i}))\,dx,\quad \mathrm{II}=-\int_{\Omega_{R}}(\mathbb{C}^{0}e(\tilde{v}_{i}),e(w_{i}))\,dx.
\end{align*}

For the first term $\mathrm{I}$, we deduce from the H\"{o}lder inequality that
\begin{align}\label{AKTN001}
|\mathrm{I}|\leq&C\int_{\Omega\setminus\Omega_{R}}|\nabla\tilde{v}_{i}||\nabla w_{i}|\leq C\|\psi^{i}\|_{C^{1}}\|\nabla w_{i}\|_{L^{2}(\Omega\setminus\Omega_{R})}.
\end{align}
Recalling the definitions of $\mathbb{C}^{0}$ and $\tilde{v}_{i}$, it follows from a direct computation that
\begin{align*}
(\mathbb{C}^{0}e(\tilde{v}_{i}),e(w_{i}))=&\lambda\partial_{i}\tilde{v}_{i}^{i}\partial_{i}w^{i}_{i}+\mu\sum^{d}_{j=1}(\partial_{i}w^{j}_{i}+\partial_{j}w^{i}_{i})\partial_{j}\tilde{v}_{i}^{i},\quad i=1,2,...,d,
\end{align*}
where $\tilde{v}_{i}^{i}=\psi^{i}(x',\varepsilon+h_{1}(x'))\bar{v}.$ Since the case of $i=d$ is the same, it suffices to consider the case of $i\in\{1,...,d-1\}$ in the following. We first decompose $\mathrm{II}$ into two parts as follows:
\begin{align*}
\mathrm{II}_{1}=&\int_{\Omega_{R}}\lambda\partial_{i}\tilde{v}_{i}^{i}\partial_{i}w^{i}_{i}+\mu\sum^{d-1}_{j=1}(\partial_{i}w^{j}_{i}+\partial_{j}w^{i}_{i})\partial_{j}\tilde{v}_{i}^{i},\\
\mathrm{II}_{2}=&\int_{\Omega_{R}}\mu(\partial_{i}w^{d}_{i}+\partial_{d}w^{i}_{i})\partial_{d}\tilde{v}_{i}^{i}.
\end{align*}
Using the H\"{o}lder inequality again, we derive
\begin{align}\label{AHMZ001}
|\mathrm{II}_{1}|\leq&C\|\nabla_{x'}\tilde{v}_{i}^{i}\|_{L^{2}(\Omega_{R})}\|\nabla w_{i}\|_{L^{2}(\Omega_{R})}\leq C\|\psi^{i}\|_{C^{1}}\|\nabla w_{i}\|_{L^{2}(\Omega_{R})}.
\end{align}
As for $\mathrm{II}_{2}$, utilizing the Sobolev trace embedding theorem and in light of $\partial_{dd}\bar{v}=0$ in $\Omega_{R}$, it follows from integration by parts that
\begin{align*}
|\mathrm{II}_{2}|\leq&\int\limits_{\scriptstyle |x'|={R},\atop\scriptstyle
h_{2}(x')<x_{d}<\varepsilon+h_1(x')\hfill}\mu\left| w^{d}_{i}\partial_{d}\tilde{v}_{i}^{i}\nu_{d}+w^{d}_{i}\partial_{d}\tilde{v}_{i}^{i}\nu_{i}-w^{d}_{i}\partial_{i}\tilde{v}_{i}^{i}\nu_{d}\right|+\int_{\Omega_{R}}\left|\mu\partial_{i}\tilde{v}_{i}^{i}\partial_{d}w^{d}_{i}\right|\notag\\
\leq&\int\limits_{\scriptstyle |x'|={R},\atop\scriptstyle
h_{2}(x')<x_{d}<\varepsilon+h_1(x')\hfill}C\|\psi^{i}\|_{C^{1}}|w_{i}|+C\|\partial_{i}\tilde{v}_{i}^{i}\|_{L^{2}(\Omega_{R})}\|\nabla w_{i}\|_{L^{2}(\Omega_{R})}\notag\\
\leq&C\|\psi^{i}\|_{C^{1}}\|\nabla w_{i}\|_{L^{2}(\Omega)}.
\end{align*}
This, together with \eqref{AHMZ001}, yields that
\begin{align}\label{ASONZ}
|\mathrm{II}|\leq C\|\varphi^{i}\|_{C^{1}}\|\nabla w_{i}\|_{L^{2}(\Omega)}.
\end{align}
Consequently, it follows from \eqref{OAP001}--\eqref{AKTN001} and \eqref{ASONZ} that
\begin{align*}
\int_{\Omega}|\nabla w_{i}|^{2}dx\leq&C\|\psi^{i}\|_{C^{1}}\left(\int_{\Omega}|\nabla w_{i}|^{2}dx\right)^{\frac{1}{2}}.
\end{align*}
That is, \eqref{zzwad01} holds.

{\bf Part 2.}
For $i=1,2,...,d$ and $|z'|\leq R$, claim that
\begin{align}\label{step2}
 \int_{\Omega_\delta(z')}|\nabla w_{i}|^2dx
 &\leq C\delta^{d-\frac{2}{1+\gamma}}\big(|\psi^{i}(z',\varepsilon+h_{1}(z'))|^2+\delta^{\frac{2}{1+\gamma}}\|\psi^{i}\|_{C^{1}}^2\big).
\end{align}

To begin with, for $0<t<s<R$, we choose a smooth cutoff function $\eta$ satisfying that $0\leq\eta(x')\leq1$, $\eta(x')=1$ if $|x'-z'|<t$, $\eta(x')=0$ if $|x'-z'|>s$, and $|\nabla\eta(x')|\leq\frac{2}{s-t}$. Multiplying equation \eqref{ZWZW001} by $w_{i}\eta^{2}$ and utilizing integration by parts, we derive
\begin{align}\label{LMQ001}
\int_{\Omega_{s}(z')}(\mathbb{C}^{0}e(w_{i}),e(w_{i}\eta^{2}))\,dx=-\int_{\Omega_{s}(z')}(\mathbb{C}^{0}e(\tilde{v}_{i})-\mathcal{M},e(w_{i}\eta^{2}))\,dx.
\end{align}
On one hand, making use of \eqref{ITERA}, \eqref{ellip} and the first Korn's inequality, we deduce
\begin{align}\label{KLW01}
\int_{\Omega_{s}(z')}(\mathbb{C}^{0}e(w_{i}),e(w_{i}\eta^{2}))\,dx\geq&\frac{1}{C}\int_{\Omega_{s}(z')}|\nabla(w_{i}\eta)|^{2}dx-C\int_{\Omega_{s}(z')}|w_{i}|^{2}|\nabla\eta|^{2}dx.
\end{align}
On the other hand, it follows from the Young's inequality that for any $\zeta>0$,
\begin{align}\label{KLW02}
\left|\int_{\Omega_{s}(z')}(\mathbb{C}^{0}e(\tilde{v}_{i})-\mathcal{M},e(w_{i}\eta^{2}))\,dx\right|\leq&\zeta\int_{\Omega_{s}(z')}\eta^{2}|\nabla w_{i}|^{2}dx+C\int_{\Omega_{s}(z')}|\nabla\eta|^{2}|w_{i}|^{2}dx\notag\\
&+\frac{C}{\zeta}\int_{\Omega_{s}(z')}|\mathbb{C}^{0}e(\tilde{v}_{i})-\mathcal{M}|^{2}dx.
\end{align}
From \eqref{LMQ001}--\eqref{KLW02}, we know
\begin{align*}
\int_{\Omega_{t}(z')}|\nabla w_{i}|^{2}dx\leq\frac{C}{(s-t)^{2}}\int_{\Omega_{s}(z')}|w_{i}|^{2}dx+C\int_{\Omega_{s}(z')}|\mathbb{C}^{0}e(\tilde{v}_{i})-\mathcal{M}|^{2}dx.
\end{align*}
Let
\begin{align*}
\mathcal{M}=\frac{1}{|\Omega_{s}(z')|}\int_{\Omega_{s}(z')}\mathbb{C}^{0}e(\tilde{v}_{i}(y))\,dy.
\end{align*}

For $|z'|\leq R$, $0<s\leq\vartheta(\tau,\kappa_{1})\delta^{\frac{1}{1+\gamma}}$, $\vartheta(\tau,\kappa_{1})=\frac{1}{8\kappa_{1}\max\{1,\tau^{-\frac{\gamma}{1+\gamma}}\}}$, making use of conditions ({\bf{S1}}) and ({\bf{S2}}), we obtain that for $(x',x_{d})\in\Omega_{s}(z')$,
\begin{align}\label{ASK001}
|\delta(x')-\delta(z')|\leq&|h_{1}(x')-h_{1}(z')|+|h_{2}(x')-h_{2}(z')|\notag\\
\leq&(|\nabla_{x'}h_{1}(x'_{\theta_{1}})|+|\nabla_{x'}h_{2}(x'_{\theta})|)|x'-z'|\notag\\
\leq&\kappa_{1}|x'-z'|(|x'_{\theta_{1}}|^{\gamma}+|x'_{\theta}|^{\gamma})\notag\\
\leq&2\kappa_{1}s(s^{\gamma}+|z'|^{\gamma})\notag\\
\leq&\frac{\delta(z')}{2},
\end{align}
which implies that
\begin{align}\label{QWN001}
\frac{1}{2}\delta(z')\leq\delta(x')\leq\frac{3}{2}\delta(z'),\quad\mathrm{in}\;\Omega_{s}(z').
\end{align}
In view of \eqref{QWN001}, a direct computation yields that
\begin{align}\label{QWN002}
[\nabla\tilde{v}_{i}]_{\gamma,\Omega_{s}(z')}\leq C\big(|\psi^{i}(z',\varepsilon+h_{1}(z'))|\delta^{-\frac{2+\gamma}{1+\gamma}}+\|\psi^{i}\|_{C^{1}}\delta^{-1}\big)s^{1-\gamma}.
\end{align}
Due to the fact that $w_{i}=0$ on $\partial\Omega$, it follows from \eqref{QWN001}--\eqref{QWN002} that
\begin{align}\label{ADE007}
\int_{\Omega_{s}(z')}|w_{i}|^{2}\leq C\delta^{2}\int_{\Omega_{s}(z')}|\nabla w_{i}|^{2},
\end{align}
and
\begin{align}\label{ADE006}
&\int_{\Omega_{s}(z')}|\mathbb{C}^{0}e(\tilde{v}_{i})-\mathcal{M}|^{2}dx\leq  Cs^{d+1}\delta^{-\frac{3+\gamma}{1+\gamma}}\big(|\psi^{i}(z',\varepsilon+h_{1}(z'))|^{2}+\delta^{\frac{2}{1+\gamma}}\|\psi^{i}\|_{C^{1}}^{2}\big).
\end{align}

Write
\begin{align*}
F(t):=\int_{\Omega_{t}(z')}|\nabla w_{i}|^{2}.
\end{align*}
Then combining (\ref{ADE007})--(\ref{ADE006}), we obtain
\begin{align}\label{ADE008}
F(t)\leq \left(\frac{c\delta}{s-t}\right)^2F(s)+Cs^{d+1}\delta^{-\frac{3+\gamma}{1+\gamma}}\big(|\psi^{i}(z',\varepsilon+h_{1}(z'))|^{2}+\delta^{\frac{2}{1+\gamma}}\|\psi^{i}\|_{C^{1}}^{2}\big),
\end{align}
where $c$ and $C$ are universal constants independent of $\varepsilon$.

Let $k=\left[\frac{\vartheta(\tau,\kappa_{1})}{4c\delta^{\frac{\gamma}{1+\gamma}}}\right]+1$ and $t_{i}=\delta+2ci\delta,\;i=0,1,2,...,k$. Then applying (\ref{ADE008}) with $s=t_{i+1}$ and $t=t_{i}$, we have
$$F(t_{i})\leq\frac{1}{4}F(t_{i+1})+C(i+1)^{n+1}\delta^{d-\frac{2}{1+\gamma}}\big(|\psi^{i}(z',\varepsilon+h_{1}(z'))|^{2}+\delta^{\frac{2}{1+\gamma}}\|\psi^{i}\|^{2}_{C^{1}}\big).$$
This, in combination with $k$ iterations and (\ref{zzwad01}), reads that for a sufficiently small $\varepsilon>0$,
\begin{align*}
F(t_{0})\leq C\delta^{d-\frac{2}{1+\gamma}}\big(|\psi^{i}(z',\varepsilon+h_{1}(z'))|^{2}+\delta^{\frac{2}{1+\gamma}}\|\psi^{i}\|^{2}_{C^{1}}\big).
\end{align*}

{\bf Part 3.}
Proof of
\begin{align*}
|\nabla w_{i}(x)|\leq C\delta^{-\frac{1}{1+\gamma}}\big(|\psi^{i}(x',\varepsilon+h_{1}(x'))|+\delta^{\frac{1}{1+\gamma}}\|\psi^{i}\|_{C^{1}}\big),\quad i=1,2,...,d,\;x\in\Omega_{R}.
\end{align*}

By carrying out a change of variables in $\Omega_{\delta}(z')$ as follows:
\begin{align*}
\begin{cases}
x'-z'=\delta y',\\
x_{d}=\delta y_{d},
\end{cases}
\end{align*}
we rescale $\Omega_{\delta}(z')$ into $Q_{1}$, where, for $0<r\leq 1$,
\begin{align*}
Q_{r}=\left\{y\in\mathbb{R}^{d}\,\Big|\,\frac{1}{\delta}h(\delta y'+z')<y_{d}<\frac{\varepsilon}{\delta}+\frac{1}{\delta}h_{1}(\delta y'+z'),\;|y'|<r\right\}.
\end{align*}
Denote the top and bottom boundaries of $Q_{r}$ by
\begin{align*}
\Gamma^{+}_{r}=&\left\{y\in\mathbb{R}^{d}\,\Big|\,y_{d}=\frac{\varepsilon}{\delta}+\frac{1}{\delta}h_{1}(\delta y'+z'),\;|y'|<r\right\},
\end{align*}
and
\begin{align*}
\Gamma^{-}_{r}=&\left\{y\in\mathbb{R}^{d}\,\Big|\,y_{d}=\frac{1}{\delta}h(\delta y'+z'),\;|y'|<r\right\},
\end{align*}
respectively. $Q_{1}$ is actually of nearly unit size. Similar to \eqref{ASK001}, we obtain that for $x\in\Omega_{\delta}(z')$,
\begin{align*}
|\delta(x')-\delta(z')|
\leq&2\kappa_{1}\delta(\delta^{\gamma}+|z'|^{\gamma})\leq 4\kappa_{1}\max\{1,\tau^{-\frac{\gamma}{1+\gamma}}\}\delta^{\frac{1+2\gamma}{1+\gamma}}.
\end{align*}
Then we have
\begin{align*}
\left|\frac{\delta(x')}{\delta(z')}-1\right|\leq 8\max\{1,\tau^{\frac{\gamma}{1+\gamma}}\}\kappa_{1}R^{\gamma},
\end{align*}
which, together with the fact that $R>0$ is a small constant independent of $\varepsilon$, reads that $Q_{1}$ is of nearly unit size. Denote
\begin{align*}
W(y',y_{d}):=w_{i}(\delta y'+z',\delta y_{d}),\quad \tilde{V}(y',y_{d}):=\tilde{v}_{i}(\delta y'+z',\delta y_{d}),\quad y\in Q_{1}.
\end{align*}
In view of \eqref{Zww01}, we obtain that $W$ solves
\begin{align}\label{CL005}
\begin{cases}
-\partial_{j}(C_{ijkl}^{0}\partial_{l}W^{k})=\partial_{j}(C_{ijkl}^{0}\partial_{l}\tilde{V}^{k}),&\mathrm{in}\; Q_{1},\\
W=0,&\mathrm{on}\;\Gamma^{\pm}_{1}.
\end{cases}
\end{align}
Then applying Theorems \ref{CL001} and \ref{CL002} for equation \eqref{CL005} with $f_{ij}=C_{ijkl}^{0}\partial_{l}\tilde{V}^{k}$, it follows from the Poincar\'{e} inequality that
\begin{align*}
\|\nabla W\|_{L^{\infty}(Q_{1/4})}\leq&C\big(\|W\|_{L^{\infty}(Q_{1/2})}+[\nabla \tilde{V}]_{\gamma,Q_{1/2}}\big)\notag\\
\leq&C\left(\|\nabla W\|_{L^{2}(Q_{1})}+[\nabla\tilde{V}]_{\gamma,Q_{1}}\right).
\end{align*}
In the above we utilized the fact that $[C_{ijkl}^{0}\partial_{l}\tilde{V}^{k}]_{\gamma,Q_{1}}\leq[\nabla\tilde{V}]_{\gamma,Q_{1}}$.

Then back to $w$, we have
\begin{align*}
\|\nabla w_{i}\|_{L^{\infty}(\Omega_{\delta/4}(z'))}\leq\frac{C}{\delta}\big(\delta^{1-\frac{d}{2}}\|\nabla w_{i}\|_{L^{2}(\Omega_{\delta}(z'))}+\delta^{1+\gamma}[\nabla\tilde{v}_{i}]_{\gamma,\Omega_{\delta}(z')}\big),
\end{align*}
which, in combination with \eqref{step2} and \eqref{QWN002}, yields that for $z\in\Omega_{R}$,
\begin{align*}
|\nabla w(z)|\leq \|\nabla w\|_{L^{\infty}(\Omega_{\delta/4}(z'))}\leq C\delta^{-\frac{1}{1+\gamma}}\big(|\psi^{i}(z',\varepsilon+h_{1}(z'))|+\delta^{\frac{1}{1+\gamma}}\|\psi^{i}\|_{C^{1}}\big).
\end{align*}
Consequently, Theorem \ref{thm8698} holds.

\end{proof}

\section{Proofs of Theorems \ref{ZHthm002} and \ref{ZHthm003}}\label{SEC004}
For $\alpha=1,2,...,\frac{d(d+1)}{2}$, denote
\begin{align}\label{LATU01}
\bar{u}_{2}^{\alpha}=\psi_{\alpha}(1-\bar{v}).
\end{align}
Then applying Theorem \ref{thm8698} with $\psi=\psi_{\alpha},\,\phi=0$ or $\psi=0,\,\phi=\psi_{\alpha}$, $\alpha=1,2,...,\frac{d(d+1)}{2}$, we have
\begin{corollary}\label{thm86}
Assume as above. Let $v_{i}^{\alpha}\in H^{1}(\Omega;\mathbb{R}^{d})$, $i=1,2$, $\alpha=1,2,...,\frac{d(d+1)}{2}$ be a weak solution of \eqref{qaz001}. Then, for a sufficiently small $\varepsilon>0$, $x\in\Omega_{R}$,
\begin{align}\label{Le2.025}
\nabla v_{i}^{\alpha}=&\nabla\bar{u}_{1}^{\alpha}+O(1)
\begin{cases}
\delta^{-\frac{1}{1+\gamma}},&\alpha=1,2,...,d,\\
1,&\alpha=d+1,...,\frac{d(d+1)}{2},
\end{cases}
\end{align}
where $\delta$ is defined in \eqref{deta}, the leading terms $\bar{u}_{i}^{\alpha}$, $i=1,2,\,\alpha=1,2,...,\frac{d(d+1)}{2}$ are defined by \eqref{zzwz002} and \eqref{LATU01}, respectively.
\end{corollary}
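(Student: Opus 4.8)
The plan is to derive Corollary~\ref{thm86} as a direct consequence of Theorem~\ref{thm8698} applied twice, once for each inclusion. For $v_1^\alpha$, I would invoke Theorem~\ref{thm8698} with the boundary data $\psi = \psi_\alpha$ on $\partial D_1$ and $\phi = 0$ on $\partial D_2$; then the auxiliary function $\tilde v$ defined in \eqref{CAN01} reduces to $\psi_\alpha(x',\varepsilon+h_1(x'))\bar v$ in $\Omega_{2R}$. Since $\psi_\alpha$ is either a constant vector $e_i$ or a linear function $x_k e_j - x_j e_k$, on $\Gamma^+$ we have $\psi_\alpha(x',\varepsilon+h_1(x'))$ agreeing with $\psi_\alpha(x',x_d)$ up to an error controlled by $|\varepsilon + h_1(x') - x_d| \le \delta(x') \le C|x'|^{1+\gamma} + C\varepsilon$, which is lower-order; hence $\tilde v$ coincides with $\bar u_1^\alpha = \psi_\alpha \bar v$ modulo terms whose gradient is $O(1)$ times $\|\psi_\alpha\|_{C^1}$ on the relevant scale. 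For $v_2^\alpha$ one argues symmetrically with $\psi = 0$, $\phi = \psi_\alpha$, obtaining $\tilde v = \psi_\alpha(x',h_2(x'))(1-\bar v)$, which matches $\bar u_2^\alpha = \psi_\alpha(1-\bar v)$ up to the same kind of error; but one then observes $\bar u_2^\alpha = \psi_\alpha - \bar u_1^\alpha$ on $\Omega_{2R}$ (since $\bar v + (1-\bar v) = 1$ and $\psi_\alpha$ is smooth with bounded gradient there), so $\nabla v_2^\alpha = -\nabla\bar u_1^\alpha + \nabla\psi_\alpha + (\text{error}) = \nabla\bar u_1^\alpha + O(1)(\cdots)$ after absorbing the bounded smooth term $\nabla\psi_\alpha$ into the remainder. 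This is why both cases yield the \emph{same} leading term $\nabla\bar u_1^\alpha$ in \eqref{Le2.025}.

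The key remaining point is to track the size of the residual $\mathcal{R}_\delta(\psi,\phi)$ from \eqref{ANC001} in terms of the index $\alpha$. For $\alpha = 1,\dots,d$, $\psi_\alpha = e_i$ is a unit constant vector, so $\psi_\alpha(x',\varepsilon+h_1(x')) - \phi(x',h_2(x')) $ equals $e_i$ (or $-e_i$), giving $\mathcal{R}_\delta \sim \delta^{-1/(1+\gamma)} + O(1)$, and since $\delta^{-1/(1+\gamma)} \to \infty$ this is $O(\delta^{-1/(1+\gamma)})$. For $\alpha = d+1,\dots,\frac{d(d+1)}{2}$, $\psi_\alpha(x',x_d) = x_k e_j - x_j e_k$ vanishes at the origin and is of size $O(|x'|+|x_d|) = O(\delta^{1/(1+\gamma)})$ on $\{|x'| \lesssim \delta^{1/(1+\gamma)}\}$; multiplying by $\delta^{-1/(1+\gamma)}$ in \eqref{ANC001} leaves a bounded quantity, so $\mathcal{R}_\delta = O(1)$ there. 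More carefully, for general $z' \in B_R'$ one has $|\psi_\alpha(z',\varepsilon+h_1(z')) - \psi_\alpha(z',h_2(z'))| \le C\delta(z')$ for the rotational generators (the difference only involves the $x_d$-slot), so $\delta^{-1/(1+\gamma)}|\cdots| \le C\delta^{1 - 1/(1+\gamma)} = C\delta^{\gamma/(1+\gamma)} \le C$; together with $\|\psi_\alpha\|_{C^1(\partial D_i)} \le C$ this gives the $O(1)$ bound uniformly on $\Omega_R$.

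The one genuine subtlety — and the step I expect to need the most care — is verifying that the auxiliary function $\tilde v$ built inside the proof of Theorem~\ref{thm8698} (which uses the cutoff $\rho$ and the extension of $\psi$) actually has gradient equal to $\nabla \bar u_1^\alpha$ plus an admissible remainder, \emph{uniformly} in $\Omega_R$ rather than merely in $\Omega_{2R}$; but since $\rho \equiv 1$ on $\Omega_{3R/2} \supset \Omega_R$, inside $\Omega_R$ the function $\tilde v$ is exactly $\psi_\alpha(x',\varepsilon+h_1(x'))\bar v$, and the difference $\nabla[\psi_\alpha(x',\varepsilon+h_1(x'))\bar v] - \nabla[\psi_\alpha(x',x_d)\bar v]$ is a product of a $C^{0,\gamma}$ factor of size $O(\delta)$ with $\nabla \bar v = O(\delta^{-1})$, plus $\bar v$ times the gradient of that factor, i.e. $O(1) \cdot \|\psi_\alpha\|_{C^1}$ (with an extra $O(\delta^{-1/(1+\gamma)})\cdot O(\delta)\cdot$-type gain for rotational $\psi_\alpha$); all such terms are dominated by the stated remainder. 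Thus the corollary follows by combining Theorem~\ref{thm8698}, the identity $\bar u_1^\alpha + \bar u_2^\alpha = \psi_\alpha$, and the case analysis on $\psi_\alpha$ above, with no further estimates required beyond bookkeeping.
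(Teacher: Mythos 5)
Your overall route is exactly the paper's: Corollary~\ref{thm86} is obtained by applying Theorem~\ref{thm8698} with $(\psi,\phi)=(\psi_\alpha,0)$ for $v_1^\alpha$ and $(\psi,\phi)=(0,\psi_\alpha)$ for $v_2^\alpha$, and your bookkeeping for $v_1^\alpha$ is correct. For $\alpha=1,\dots,d$ the function $\tilde v$ of \eqref{CAN01} is exactly $\bar u_1^\alpha$ and $\mathcal{R}_\delta(\psi_\alpha,0)\le C\delta^{-\frac{1}{1+\gamma}}$; for the rotational generators one uses $|\psi_\alpha(x)|\le C(|x'|+|x_d|)\le C\delta^{\frac{1}{1+\gamma}}$ throughout $\Omega_R$ (a consequence of ({\bf H1}), since $\delta(x')\gtrsim \varepsilon+|x'|^{1+\gamma}$), together with $\bigl|\nabla\bigl[(\psi_\alpha(x',\varepsilon+h_1(x'))-\psi_\alpha(x))\bar v\bigr]\bigr|\le C$, to get the $O(1)$ remainder. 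One presentational slip: with $\phi=0$ the residual \eqref{ANC001} involves $|\psi_\alpha(x',\varepsilon+h_1(x'))|$ itself, not the difference of $\psi_\alpha$ at the two heights, so your ``more careful'' estimate addresses the wrong quantity; your first estimate is the one that is actually needed, and it suffices.

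The genuine flaw is in your treatment of $v_2^\alpha$. Theorem~\ref{thm8698} with $(\psi,\phi)=(0,\psi_\alpha)$ gives $\nabla v_2^\alpha=\nabla\bar u_2^\alpha+O(1)(\cdots)=-\nabla\bar u_1^\alpha+\nabla\psi_\alpha+O(1)(\cdots)$, but your next assertion, that this equals $+\nabla\bar u_1^\alpha+O(1)(\cdots)$ ``after absorbing $\nabla\psi_\alpha$,'' is false: the discrepancy is $2\nabla\bar u_1^\alpha$ up to bounded terms, and its dominant part $\psi_\alpha\otimes\nabla\bar v$ has size of order $\delta^{-1}$ for $\alpha=1,\dots,d$ and of order $\delta^{-\frac{\gamma}{1+\gamma}}$ for $\alpha=d+1,\dots,\frac{d(d+1)}{2}$, neither of which is absorbed by the admissible remainders $\delta^{-\frac{1}{1+\gamma}}$, respectively $O(1)$, in \eqref{Le2.025}. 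The correct conclusion for $i=2$ is that the leading term is $\nabla\bar u_2^\alpha$ (equivalently $-\nabla\bar u_1^\alpha$ up to an $O(1)$ error); the ``$\bar u_1^\alpha$'' written for $i=2$ in \eqref{Le2.025} is evidently a misprint, since the statement itself introduces both $\bar u_1^\alpha$ and $\bar u_2^\alpha$ of \eqref{zzwz002} and \eqref{LATU01} as the leading terms, and only the $i=1$ expansion is used downstream (in \eqref{Decom002}, the proofs of Theorems~\ref{ZHthm002}--\ref{ZHthm003}, and Lemma~\ref{lemmabc}). So you should either restrict the displayed expansion to $i=1$ or state it with $\nabla\bar u_i^\alpha$; the sign-absorbing step cannot be repaired and should be removed.
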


A direct application of Theorem 1.1 in \cite{LLBY2014} yields that
\begin{corollary}\label{coro00z}
Assume as above. Let $v_{i}^{\ast\alpha}$ and $v_{i}^{\alpha}$, $i=1,2$, $\alpha=1,2,...,\frac{d(d+1)}{2}$ be the solutions of \eqref{qaz001}, respectively. Then, we have
\begin{align*}
|\nabla v_{0}|+\left|\sum^{2}_{i=1}\nabla v_{i}^{\alpha}\right|\leq C\delta^{-\frac{d}{2}}e^{-\frac{1}{2C\delta^{\gamma/(1+\gamma)}}},\;\;\mathrm{in}\;\Omega_{R},
\end{align*}
and
\begin{align*}
\left|\sum^{2}_{i=1}\nabla v_{i}^{\ast\alpha}\right|\leq C|x'|^{-\frac{(1+\gamma)d}{2}}e^{-\frac{1}{2C|x'|^{\gamma}}},\;\;\mathrm{in}\;\Omega_{R}^{\ast},
\end{align*}
where the constant $C$ depends on $\gamma,d,\lambda,\mu,\tau,\kappa_{1},\kappa_{2}$, but not on $\varepsilon$.
\end{corollary}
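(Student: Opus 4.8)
\textbf{Proof proposal for Corollary \ref{coro00z}.}

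The plan is to reduce both estimates to a single energy-decay statement and then invoke Theorem 1.1 of \cite{LLBY2014}. First I would observe that each of $v_0$ and $\sum_{i=1}^{2}v_i^{\alpha}$ solves the homogeneous Lam\'{e} system $\mathcal{L}_{\lambda,\mu}(\cdot)=0$ in $\Omega$ with \emph{the same constant vector-valued data} on the two facing boundary pieces $\Gamma^{+}_{R}$ and $\Gamma^{-}_{R}$: indeed $v_0=0$ on $\partial D_1\cup\partial D_2$, while $v_1^{\alpha}+v_2^{\alpha}=\psi^{\alpha}$ on $\partial D_1$ and also $=\psi^{\alpha}$ on $\partial D_2$ by \eqref{qaz001}, so the difference of the boundary traces across the neck vanishes. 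This is exactly the hypothesis under which the iteration-on-energy argument of \cite{LLBY2014} produces exponential smallness of $\int_{\Omega_{t}}|\nabla(\cdot)|^2$ as $t$ decreases through the narrow neck. The geometry assumed here — $\delta(x')=\varepsilon+h_1(x')-h_2(x')$ with $h_1-h_2\sim\tau|x'|^{1+\gamma}$ by (\textbf{H1}) and $|\nabla_{x'}h_i|\le\kappa_1|x'|^{\gamma}$ by (\textbf{H2}) — is precisely the $C^{1,\gamma}$ setting to which that technique applies; one feeds in the width function $\delta$ and obtains the stated bound.

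The key steps, in order, are: (i) record that $w:=v_0$ (resp.\ $w:=\sum_{i=1}^{2}v_i^{\alpha}$) has vanishing tangential-difference Dirichlet data on $\Gamma^{\pm}_{R}$ and satisfies the global energy bound $\|\nabla w\|_{L^2(\Omega)}\le C$, which follows from the trace estimates on $v_i^{\alpha}$ and $v_0$ together with the energy minimality used already in Section \ref{sec_thm1}; (ii) apply the local energy iteration of \cite[Theorem 1.1]{LLBY2014} on the sequence of shrinking cylinders $\Omega_{t}$, $t\in(\delta,R)$, to get $\int_{\Omega_{t}}|\nabla w|^2\le C\,t^{d}\exp(-t^{-\gamma}/C)$ for $\delta\le t\le R$; (iii) convert this into a pointwise gradient bound by the same rescaling-to-unit-size argument as in Part 3 of the proof of Theorem \ref{thm8698}: on a sub-cylinder $\Omega_{\delta/4}(z')$ one dilates by $\delta$, applies the interior/boundary $C^{1,\gamma}$ estimate of Lemma \ref{CL001} (there is no nonhomogeneous right-hand side now, $f_{ij}=0$), and scales back, losing a factor $\delta^{-1-d/2}$ on the $L^2$ quantity, which produces the claimed $\delta^{-d/2}e^{-1/(2C\delta^{\gamma/(1+\gamma)})}$; (iv) for the starred estimate, repeat verbatim on $\Omega^{\ast}$ with the touching configuration, where the role of the neck width $\delta$ is played by $|x'|^{1+\gamma}$ (since $\varepsilon=0$), giving $|x'|^{-(1+\gamma)d/2}e^{-1/(2C|x'|^{\gamma})}$; one uses here that $v_i^{\ast\alpha}$ satisfies the analogue of \eqref{qaz001} on $\Omega^{\ast}$ so that $\sum_i v_i^{\ast\alpha}$ again has matching data $\psi^{\alpha}$ on $\partial D_1^{\ast}\setminus\{0\}$ and on $\partial D_2$.

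I expect the main obstacle to be step (ii): making the energy iteration from \cite{LLBY2014} run cleanly in the $C^{1,\gamma}$ geometry with the correct exponent $\gamma/(1+\gamma)$ in the exponential, i.e.\ verifying that the ratio of consecutive cylinder heights and the Poincar\'{e} constants behave as needed when $\delta(x')$ degenerates only like $|x'|^{1+\gamma}$ rather than quadratically. Concretely, the decay increment per iteration step is governed by $\int_{t}^{R}\frac{ds}{w(s)}$ where $w(s)\sim s^{1+\gamma}$ is the neck width, and one must check this integral is $\gtrsim t^{-\gamma}$, which is where the $\gamma/(1+\gamma)$ (after rewriting in terms of $\varepsilon$ via $t\sim\varepsilon^{1/(1+\gamma)}$ at the relevant scale) comes from. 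Once this is in place, steps (i), (iii), (iv) are routine: (i) is the a priori energy bound already implicit in the existence theory, (iii) is a line-by-line copy of Part 3 above with $f_{ij}=0$, and (iv) is the same argument with $\varepsilon=0$. The only point of care in (iii)--(iv) is bookkeeping the powers of $\delta$ (resp.\ $|x'|$) picked up under the parabolic-type rescaling of a $d$-dimensional neck, which accounts for the prefactors $\delta^{-d/2}$ and $|x'|^{-(1+\gamma)d/2}$.
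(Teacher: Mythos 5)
Your proposal follows essentially the same route as the paper: the paper simply notes that the corollary is "a slight modification of Theorem 1.1 in \cite{LLBY2014}" and omits the proof, and your argument fills in exactly the details that citation presumes — the observation that $v_0$ and $\sum_i v_i^{\alpha}$ (resp.\ $\sum_i v_i^{\ast\alpha}$) carry identical Dirichlet data on both sides of the neck, the energy iteration adapted to the $C^{1,\gamma}$ width $\delta\sim\varepsilon+\tau|x'|^{1+\gamma}$ giving the exponent $\gamma/(1+\gamma)$, and the rescaling step producing the prefactors $\delta^{-d/2}$ and $|x'|^{-(1+\gamma)d/2}$. I see no gap; this is a correct and fuller version of the argument the paper invokes by reference.
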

The proof of this corollary is a slight modification of Theorem 1.1 in \cite{LLBY2014} and thus omitted here.

We now state a result in terms of the boundedness of $C_{i}^{\alpha}$, $i=1,2,$ $\alpha=1,2,...,\frac{d(d+1)}{2}$. Its proof is a
slight modification of the proof of Lemma 4.1 in \cite{BLL2015}.
\begin{lemma}\label{PAK001}
Let $C_{i}^{\alpha}$, $i=1,2,\,\alpha=1,2,...,\frac{d(d+1)}{2}$ be defined in \eqref{Decom}. Then
\begin{align*}
|C_{i}^{\alpha}|\leq C,\quad i=1,2,\,\alpha=1,2,...,\frac{d(d+1)}{2},
\end{align*}
where $C$ is a positive constant independent of $\varepsilon$.
\end{lemma}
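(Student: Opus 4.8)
The plan is to adapt the standard argument (Lemma~4.1 of \cite{BLL2015}) to the present setting, the key point being an energy comparison. First I would use the fourth line of \eqref{La.002}, which forces the coefficients $C_i^\alpha$ to be determined; testing the equation $\mathcal{L}_{\lambda,\mu}u=0$ against $v_i^\beta$ and applying \eqref{Le2.01222} on $\Omega$, one converts the orthogonality conditions $\int_{\partial D_i}\frac{\partial u}{\partial\nu_0}\big|_+\cdot\psi_\alpha=0$ into a linear system for the vector $(C_i^\alpha)$. Concretely, plugging the decomposition \eqref{Decom} into these conditions produces
\begin{align*}
\sum_{\beta,j}\Big(\int_\Omega(\mathbb{C}^0e(v_j^\beta),e(v_i^\alpha))\,dx\Big)C_j^\beta = -\int_\Omega(\mathbb{C}^0e(v_0),e(v_i^\alpha))\,dx,
\end{align*}
i.e.\ $\mathbb{A}_\varepsilon \vec{C} = \vec{b}_\varepsilon$ where $\mathbb{A}_\varepsilon$ is the (symmetric, positive semidefinite) energy matrix with entries $a_{ij}^{\alpha\beta}:=\int_\Omega(\mathbb{C}^0e(v_i^\alpha),e(v_j^\beta))\,dx$ and $\vec b_\varepsilon$ has entries $-\int_\Omega(\mathbb{C}^0e(v_0),e(v_i^\alpha))\,dx = b_i^\alpha + O(1)$ with the $b_i^\alpha$ bounded since $v_0$ is controlled by $\|\varphi\|_{C^2}$ away from the cusp and contributes negligibly near it. So the task reduces to showing $\mathbb{A}_\varepsilon$ is uniformly invertible, i.e.\ $|\mathbb{A}_\varepsilon^{-1}|\le C$ independent of $\varepsilon$.

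Next I would estimate the diagonal and off-diagonal blocks of $\mathbb{A}_\varepsilon$ using Corollary~\ref{thm86}. For $\alpha,\beta\in\{1,\dots,d\}$ (the "translation" indices), $\nabla v_i^\alpha = \nabla\bar u_1^\alpha + O(\delta^{-1/(1+\gamma)})$ and the dominant term $\partial_{x_d}\bar u_1^\alpha = \psi_\alpha\delta^{-1}$, so $a_{11}^{\alpha\beta}\sim \int_{\Omega_R}\delta^{-2}\,dx \sim \varepsilon^{-1/(1+\gamma)}$ in $d=2$ and $\sim \varepsilon^{-1}$ in $d\ge3$ — these entries blow up. For the "rotation" indices $\alpha,\beta\in\{d+1,\dots,\frac{d(d+1)}{2}\}$, $\nabla\bar u_1^\alpha$ has a milder singularity and the corresponding entries grow like $\varepsilon^{-\gamma/(1+\gamma)}$ (or stay bounded in $d=2$). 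The crucial structural fact is that, after an appropriate rescaling of the basis (dividing the $\alpha$-th equation and unknown by $\sqrt{a_{ii}^{\alpha\alpha}}$, or equivalently tracking the quadratic form), the rescaled matrix $\tilde{\mathbb{A}}_\varepsilon$ converges as $\varepsilon\to0$ to a matrix built from the touching-configuration quantities — in fact the blow-up factor matrices $\mathbb{F}_0^\ast$, $\mathbb{F}^\ast$ appearing in Theorems~\ref{ZHthm002}--\ref{ZHthm003} are exactly (Schur complements / submatrices of) this limit — whose nondegeneracy is a consequence of linear independence of the $v_i^{\ast\alpha}$ on $\Omega^\ast$, which in turn follows from the uniqueness theory for \eqref{qaz001111}. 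Since $\mathbb{A}_\varepsilon$ is positive semidefinite and each rescaled entry stays bounded with the rescaled matrix bounded away from singular, one gets $|\vec C|\le C|\mathbb{A}_\varepsilon^{-1}||\vec b_\varepsilon|\le C$.

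A cleaner route, and probably the one matching \cite{BLL2015}, avoids diagonalizing $\mathbb{A}_\varepsilon$ and instead argues by energy minimization directly: $u$ minimizes $\int_\Omega(\mathbb{C}^0e(u),e(u))$ over its admissible class, so $\int_\Omega|\nabla u|^2 \le \int_\Omega|\nabla w|^2$ for a suitable competitor $w$ (e.g.\ built from $v_0$ plus the explicit interpolants $\bar u_i^\alpha$ with the correct boundary values $\psi_\alpha$ on $D_i$), giving $\int_\Omega(\mathbb{C}^0e(u),e(u))\le C$ with $C$ independent of $\varepsilon$. One then extracts each $C_i^\alpha$ from $u$ by testing against a dual function (a carefully chosen rigid-motion-valued test function supported near $\partial D_i$) whose pairing isolates $C_i^\alpha$ up to a uniformly bounded factor, using $\int_{\Omega\setminus\Omega_R}|\nabla u|^2\le C\|\varphi\|_{C^2}^2$ from elliptic regularity to control the part away from the cusp and the trace of $u$ on $\partial D_i$ to control the part near it.

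The main obstacle is the uniform invertibility: the energy matrix $\mathbb{A}_\varepsilon$ genuinely degenerates (some entries blow up while others stay bounded), so a naive bound on $\|\mathbb{A}_\varepsilon^{-1}\|$ fails and one must exploit the precise matching — that the \emph{ratios} of entries stabilize and the limiting (rescaled) matrix is nonsingular because the touching-limit solutions $v_i^{\ast\alpha}$ span. Getting this matching requires the sharp asymptotics of $\nabla v_i^\alpha$ from Corollary~\ref{thm86} together with control of the cross terms $\int_{\Omega_R}\nabla\bar u_1^\alpha\cdot\nabla\bar u_1^\beta$ for mixed $\alpha,\beta$; the remaining steps (energy bound for $u$, boundedness of $b_i^\alpha$, the exponential decay of the regular part from Corollary~\ref{coro00z}) are routine given the earlier results. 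For the purposes of this lemma one only needs the qualitative conclusion $|C_i^\alpha|\le C$, so it suffices to combine the uniform energy bound $\int_\Omega(\mathbb{C}^0e(u),e(u))\le C$ with a compactness/contradiction argument showing the extraction map $u\mapsto(C_i^\alpha)$ has operator norm bounded uniformly in $\varepsilon$, which is where I would put the weight of the proof.
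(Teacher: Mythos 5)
Your second route is the right one and matches what the paper invokes (Lemma~4.1 of \cite{BLL2015}): the solution $u$ of \eqref{La.002} minimizes $\int_\Omega(\mathbb{C}^0e(u),e(u))$ over the constrained class $\{w\in H^1_\varphi(D):e(w)=0\ \mathrm{in}\ D_1\cup D_2\}$, and comparing with the fixed competitor $v_0$ (extended by zero in $D_1\cup D_2$) gives an $\varepsilon$-independent energy bound; first Korn's inequality on the \emph{fixed} domain $D$ applied to $u-\tilde\varphi\in H^1_0(D;\mathbb{R}^d)$ then yields $\|u\|_{H^1(D)}\leq C$. However, the extraction step you describe is more elaborate than it needs to be, and the ``compactness/contradiction argument showing the extraction map has uniformly bounded operator norm'' is a red herring. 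The clean mechanism is finite-dimensional: since $e(u)=0$ in $D_i$, one has $u\equiv\sum_\alpha C_i^\alpha\psi_\alpha$ on $D_i$, and because $D_i$ has fixed shape (only translated by $\varepsilon$ for $i=1$), there is a fixed ball $B\subset D_i$ for all small $\varepsilon$ on which the $\psi_\alpha$ are linearly independent, so $|C_i^\alpha|\leq C\bigl\|\sum_\beta C_i^\beta\psi_\beta\bigr\|_{L^2(B)}=C\|u\|_{L^2(B)}\leq C$ with a constant from a one-shot equivalence of norms on $\mathbb{R}^{d(d+1)/2}$, not from any $\varepsilon$-dependent compactness. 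No dual test function near $\partial D_i$ and no trace theorem on the $\varepsilon$-dependent $\Omega$ are needed.

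Your first route should be dropped: the statement ``$|\mathbb{A}_\varepsilon^{-1}|\le C$'' is not the right frame, since the diagonal blocks $a_{11}^{\alpha\alpha}$ genuinely blow up (at rate $\varepsilon^{-\gamma/(1+\gamma)}$ or $\varepsilon^{-1}$), so $\mathbb{A}_\varepsilon$ does not converge and uniform two-sided control of its inverse is exactly what is \emph{not} available without the Schur-complement bookkeeping. Making that route rigorous would require the full asymptotics of Lemma~\ref{lemmabc}, which are proved much later and with considerably more effort, whereas Lemma~\ref{PAK001} is an elementary a priori bound that the paper deliberately establishes before and independently of those computations. The qualitative claim $|C_i^\alpha|\le C$ only needs energy-plus-rigid-motion, nothing about the limiting blow-up factor matrices.
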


On the other hand, with regard to the asymptotic expansions of $C^{\alpha}_{1}-C_{2}^{\alpha}$, $\alpha=1,2,\cdots,\frac{d(d+1)}{2}$, we have

\begin{theorem}\label{OMG123}
Let $C_{i}^{\alpha}$, $i=1,2,\,\alpha=1,2,...,\frac{d(d+1)}{2}$ be defined in \eqref{Decom}. Then for a sufficiently small $\varepsilon>0$,
\begin{itemize}
\item[(i)] if $d=2$, for $\alpha=1,2$,
\begin{align*}
C_{1}^{\alpha}-C_{2}^{\alpha}=&
\frac{\det\mathbb{F}_{1}^{\ast\alpha}}{\det \mathbb{F}_{0}^{\ast}}\frac{\varepsilon^{\frac{\gamma}{1+\gamma}}(1+O(\varepsilon(\gamma,\sigma)))}{\mathcal{L}_{2}^{\alpha}\mathcal{M}_{\gamma,\tau}},
\end{align*}
and for $\alpha=3$,
\begin{align*}
C_{1}^{3}-C_{2}^{3}=&\frac{\det\mathbb{F}_{1}^{\ast3}}{\det \mathbb{F}_{0}^{\ast}}(1+O(\varepsilon^{\frac{\gamma}{2(1+2\gamma)}})),
\end{align*}
where the constant $\mathcal{M}_{\gamma,\tau}$ is defined in \eqref{zwzh001}, the Lam\'{e} constants $\mathcal{L}_{2}^{\alpha}$, $\alpha=1,2$ is defined in \eqref{AZ}, the blow-up factor matrices $\mathbb{F}_{0}^{\ast}$ and $\mathbb{F}^{\ast\alpha}_{1},$ $\alpha=1,2,3$ are defined by \eqref{ZWZML001}--\eqref{ZWZML003}, the rest term $\varepsilon(\gamma,\sigma)$ is defined in \eqref{GC002}.

\item[(ii)] if $d\geq3$, for $\alpha=1,2,...,\frac{d(d+1)}{2}$,
\begin{align*}
C_{1}^{\alpha}-C_{2}^{\alpha}=&\frac{\det\mathbb{F}_{2}^{\ast\alpha}}{\det \mathbb{F}^{\ast}}(1+O(\bar{\varepsilon}(\gamma,d))),
\end{align*}
where the constant $\mathcal{M}_{\gamma,\tau}$ is defined in \eqref{zwzh001}, the blow-up factor matrices $\mathbb{F}^{\ast}$ and $\mathbb{F}_{2}^{\ast\alpha},$ $\alpha=1,2,...,\frac{d(d+1)}{2}$, are defined by \eqref{GC009}, the rest term $\bar{\varepsilon}(\gamma,d)$ is defined by \eqref{NZKL001}.

\end{itemize}
\end{theorem}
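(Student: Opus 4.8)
The plan is to determine the constants $C_i^\alpha$ by imposing the $\frac{d(d+1)}{2}$ orthogonality conditions in the fourth line of \eqref{La.002}, namely $\int_{\partial D_i}\frac{\partial u}{\partial\nu_0}\big|_+\cdot\psi_\alpha=0$, which turns the decomposition \eqref{Decom} into a linear system for the $C_i^\alpha$. First I would substitute \eqref{Decom} into these conditions and, using the integration-by-parts identity \eqref{Le2.01222}, rewrite each equation in terms of the energy-type bilinear quantities $a_{ij}^{\alpha\beta}=\int_\Omega(\mathbb C^0 e(v_i^\alpha),e(v_j^\beta))\,dx$ together with the boundary terms $b_i^\alpha=-\int_{\partial D}\frac{\partial v_i^\alpha}{\partial\nu_0}\big|_+\cdot\varphi$. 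Summing over $i=1,2$ (and separately singling out $i=1$) produces a linear system whose coefficient matrix is built from $a_{ij}^{\alpha\beta}$ and whose right-hand side is built from $b_i^\alpha$; by Cramer's rule the unknowns, and in particular the differences $C_1^\alpha-C_2^\alpha$, are ratios of determinants of exactly the type appearing in $\mathbb F_0^\ast,\mathbb F_1^{\ast\alpha}$ (for $d=2$) and $\mathbb F^\ast,\mathbb F_2^{\ast\alpha}$ (for $d\ge3$).

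The next step is the asymptotic analysis of the matrix entries. Each $a_{ij}^{\alpha\beta}$ must be compared with its touching-configuration counterpart $a_{ij}^{\ast\alpha\beta}$. For the entries that stay bounded (those not involving the most singular $\partial_{x_d}$ components, i.e. the blocks corresponding to $\alpha,\beta\in\{d+1,\dots,\frac{d(d+1)}2\}$ or cross terms with one such index) I would show $a_{ij}^{\alpha\beta}=a_{ij}^{\ast\alpha\beta}+o(1)$ by splitting the integral over $\Omega_R$ and $\Omega\setminus\Omega_R$, using Corollary \ref{thm86} to replace $\nabla v_i^\alpha$ by $\nabla\bar u_1^\alpha$ plus a controlled error, and dominated convergence on the fixed region $\Omega^\ast\setminus\Omega_R$. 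For the genuinely singular entries ($\alpha=\beta\in\{1,\dots,d\}$, and to a lesser degree $\alpha=\beta\in\{d+1,\dots\}$) I would compute the leading divergence by integrating $|\partial_{x_d}\bar u_1^\alpha|^2=|\psi_\alpha|^2\delta^{-2}$ against $\mathcal L_d^\alpha$ over $\Omega_{2R}$; the key one-dimensional integral $\int_{B'_{2R}}\delta(x')^{-1}dx'$ evaluates, via condition (H1) and the Gamma-function identity, to $\mathcal M_{\gamma,\tau}\,\varepsilon^{-\gamma/(1+\gamma)}(1+o(1))$ when $d=2$ and to $\sim\varepsilon^{-1}$ when $d\ge3$, with the error controlled by the $O(|x'|^{1+\gamma+\sigma})$ term in (H1) and by condition (H2) on the gradients of $h_i$; this is where $\varepsilon(\gamma,\sigma)$ and $\bar\varepsilon(\gamma,d)$ enter. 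One must also bound $b_i^\alpha-b_i^{\ast\alpha}$, which is easy since the relevant solutions are uniformly controlled near $\partial D$, far from the thin neck.

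Having the asymptotics of all entries, I would factor out the leading powers of $\varepsilon$ (or $\delta$) from the singular rows/columns. The determinant $\det\mathbb F_0^\ast$ (resp. $\det\mathbb F^\ast$) must be shown to be nonzero — this is the positive-definiteness-type argument alluded to in the remarks, obtained by recognizing the relevant matrix as a Gram matrix of energies of linearly independent functions $v_i^{\ast\alpha}$ on $\Omega^\ast$, so that strict positivity follows once linear independence is checked (using that the $\psi_\alpha$ are a basis of the rigid displacements and the unique continuation / trace properties of solutions to \eqref{qaz001111}). Then Cramer's rule, combined with the leading-order scaling of each block, yields $C_1^\alpha-C_2^\alpha=\frac{\det\mathbb F_1^{\ast\alpha}}{\det\mathbb F_0^\ast}\cdot\frac{\varepsilon^{\gamma/(1+\gamma)}}{\mathcal L_2^\alpha\mathcal M_{\gamma,\tau}}(1+O(\varepsilon(\gamma,\sigma)))$ for $d=2$, $\alpha=1,2$, the analogous formula with the extra $\delta^{-1}$-free scaling for $\alpha=3$, and the uniform formula for $d\ge3$; the error term in each case is inherited from the worst of the entrywise errors after the determinant expansion.

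The main obstacle I anticipate is twofold: first, bookkeeping the block structure of the big matrix so that the right combination of singular and bounded rows produces precisely the announced $\mathbb F_0^\ast,\mathbb F_1^{\ast\alpha}$ etc.\ (the $d=2$ case with its three distinct matrix shapes is especially delicate, since the $\alpha=3$ row scales differently from $\alpha=1,2$); and second, obtaining the sharp error exponents $\varepsilon(\gamma,\sigma)$ and $\bar\varepsilon(\gamma,d)$, which requires carefully tracking how the $O(|x'|^{1+\gamma+\sigma})$ perturbation in (H1) propagates through the neck integral and how it competes with the intrinsic regularity error of order $\delta^{\gamma/(1+\gamma)}$ coming from the $C^{1,\gamma}$ (rather than $C^{2}$) smoothness — essentially the same competition that forces the $\min$ in \eqref{GC002} and the dimension-dependent cases in \eqref{NZKL001}. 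The nonvanishing of $\det\mathbb F_0^\ast$ and $\det\mathbb F^\ast$ is a genuine but more routine point, handled by the Gram-matrix observation; the hypothesis $\det\mathbb F_1^{\ast\alpha}\neq0$ (resp.\ $\det\mathbb F_2^{\ast\alpha}\neq0$) is simply assumed, as noted in the remarks, and is what guarantees the leading term does not degenerate.
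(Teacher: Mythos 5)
Your overall framework coincides with the paper's: derive the linear system \eqref{ATKNQ001}--\eqref{JGALP001} from the fourth line of \eqref{La.002} via \eqref{Le2.01222}, compare each $a_{ij}^{\alpha\beta}$ and $b_i^{\beta}$ with its touching counterpart, prove $\det\mathbb{F}_{0}^{\ast}\neq0$ and $\det\mathbb{F}^{\ast}\neq0$ by the Gram/ellipticity argument, and conclude by Cramer's rule. However, there is a concrete error that breaks your route to part (ii). You assert that the neck integral $\int_{B'_{2R}}\delta(x')^{-1}dx'$ is of order $\varepsilon^{-1}$ for $d\geq3$ and propose to ``factor out the leading powers of $\varepsilon$ from the singular rows/columns'' in all dimensions. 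In fact for $d\geq3$ this integral stays bounded as $\varepsilon\to0$: with $x'\in\mathbb{R}^{d-1}$ and $\delta\approx\varepsilon+\tau|x'|^{1+\gamma}$ one has $\int_{0}^{R}s^{d-2}(\varepsilon+\tau s^{1+\gamma})^{-1}ds\leq\tau^{-1}\int_{0}^{R}s^{d-3-\gamma}ds<\infty$, since $d-3-\gamma>-1$. Accordingly, Lemma \ref{lemmabc} gives $a_{11}^{\alpha\alpha}=a_{11}^{\ast\alpha\alpha}+O(\bar{\varepsilon}(\gamma,d))$ for every $\alpha$ when $d\geq3$: no entry of the coefficient matrix blows up, the whole matrix converges to $\mathbb{F}^{\ast}$, and $C_{1}^{\alpha}-C_{2}^{\alpha}$ tends to the finite limit $\det\mathbb{F}_{2}^{\ast\alpha}/\det\mathbb{F}^{\ast}$ with no $\varepsilon$-power prefactor. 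If your claimed $\varepsilon^{-1}$ scaling were fed into Cramer's rule, the $d\geq3$ formula would acquire spurious positive powers of $\varepsilon$, contradicting the statement; the $\varepsilon^{-1}$ blow-up of $\nabla u$ in higher dimensions comes entirely from $\nabla\bar{u}_{1}^{\alpha}\sim\delta^{-1}$, not from the constants.

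A second gap concerns your dichotomy in $d=2$ between ``singular diagonal'' and ``convergent remainder''. The cross entries $a_{11}^{12}=a_{11}^{21}$ in two dimensions are neither: they are only $O(|\ln\varepsilon|)$, and the limiting quantities $a_{11}^{\ast12}$ need not be defined (the paper explicitly warns that $a_{ij}^{\ast\alpha\beta}$ makes sense only in some cases). This is exactly why $\mathbb{F}_{0}^{\ast}$ and $\mathbb{F}_{1}^{\ast\alpha}$ are reduced matrices omitting those entries: in the proof one factors $a_{11}^{11}a_{11}^{22}$ out of the full determinants and absorbs the logarithmic cross terms into a relative error $O(\varepsilon^{\frac{\gamma}{1+\gamma}}|\ln\varepsilon|)$, which your plan does not account for. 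Finally, the bound on $b_{i}^{\beta}-b_{i}^{\ast\beta}$ is not ``easy'' in the sense you suggest: since $v_{1}^{\beta}$ and $v_{1}^{\ast\beta}$ solve problems on different perturbed domains, the paper needs the maximum-principle comparison on $D$ minus a small cylinder around the neck together with boundary gradient estimates (Lemma \ref{KM323}) to get the quantitative rate $O(\varepsilon^{\frac{\gamma}{1+2\gamma}})$ that enters the final error exponents.
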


Once the aforementioned results hold, we immediately give the proofs of Theorems \ref{ZHthm002} and \ref{ZHthm003}.
\begin{proof}[Proofs of Theorems \ref{ZHthm002} and \ref{ZHthm003}.]
To begin with, it follows from Corollary \ref{coro00z} and Lemma \ref{PAK001} that
\begin{align}\label{KBMA01}
\left|\sum_{\alpha=1}^{\frac{d(d+1)}{2}}C_{2}^\alpha\nabla({v}_{1}^\alpha+{v}_{2}^\alpha)+\nabla{v}_{0}\right|\leq C\delta^{-\frac{d}{2}}e^{-\frac{1}{2C\delta^{\gamma/(1+\gamma)}}},\;\;\mathrm{in}\;\Omega_{R}.
\end{align}
Then combining \eqref{Decom002}, \eqref{KBMA01}, Corollary \ref{thm86} and Theorem \ref{OMG123}, we deduce that

$(1)$ if $d=2$, then
\begin{align*}
\nabla u=&\sum^{2}_{\alpha=1}\frac{\det\mathbb{F}_{1}^{\ast\alpha}}{\det \mathbb{F}_{0}^{\ast}}\frac{\varepsilon^{\frac{\gamma}{1+\gamma}}(1+O(\varepsilon(\gamma,\sigma)))}{\mathcal{L}_{2}^{\alpha}\mathcal{M}_{\gamma,\tau}}(\nabla\bar{u}_{1}^{\alpha}+O(\delta^{-\frac{1}{1+\gamma}}))\notag\\
&+\frac{\det\mathbb{F}_{1}^{\ast3}}{\det \mathbb{F}_{0}^{\ast}}(1+O(\varepsilon^{\frac{\gamma}{2(1+2\gamma)}}))(\nabla\bar{u}_{1}^{3}+O(1))+O(1)\delta^{-\frac{d}{2}}e^{-\frac{1}{2C\delta^{\gamma/(1+\gamma)}}}\notag\\
=&\sum\limits_{\alpha=1}^{2}\frac{\det\mathbb{F}_{1}^{\ast\alpha}}{\det \mathbb{F}_{0}^{\ast}}\frac{\varepsilon^{\frac{\gamma}{1+\gamma}}(1+O(\varepsilon(\gamma,\sigma)))}{\mathcal{L}_{2}^{\alpha}\mathcal{M}_{\gamma,\tau}}\nabla\bar{u}_{1}^{\alpha}\notag\\
&+\frac{\det\mathbb{F}_{1}^{\ast3}}{\det \mathbb{F}_{0}^{\ast}}(1+O(\varepsilon^{\frac{\gamma}{2(1+2\gamma)}}))\nabla\bar{u}_{1}^{3}+O(1)\delta^{-\frac{1-\gamma}{1+\gamma}}\|\varphi\|_{C^{0}(\partial D)};
\end{align*}

$(ii)$ if $d\geq3$, then
\begin{align*}
\nabla u=&\sum^{d}_{\alpha=1}\frac{\det\mathbb{F}_{2}^{\ast\alpha}}{\det \mathbb{F}^{\ast}}(1+O(\bar{\varepsilon}(\gamma,d)))(\nabla\bar{u}_{1}^{\alpha}+O(\delta^{-\frac{1}{1+\gamma}}))\notag\\
&+\sum^{\frac{d(d+1)}{2}}_{\alpha=d+1}\frac{\det\mathbb{F}_{2}^{\ast\alpha}}{\det \mathbb{F}^{\ast}}(1+O(\bar{\varepsilon}(\gamma,d)))(\nabla\bar{u}_{1}^{\alpha}+O(1))+O(1)\delta^{-\frac{d}{2}}e^{-\frac{1}{2C\delta^{\gamma/(1+\gamma)}}}\notag\\
=&\sum\limits_{\alpha=1}^{\frac{d(d+1)}{2}}\frac{\det\mathbb{F}_{2}^{\ast\alpha}}{\det \mathbb{F}^{\ast}}(1+O(\bar{\varepsilon}(\gamma,d)))\nabla\bar{u}^{\alpha}_{1}+O(1)\delta^{-\frac{1}{1+\gamma}}\|\varphi\|_{C^{0}(\partial D)}.
\end{align*}

Therefore, we complete the proofs of Theorems \ref{ZHthm002} and \ref{ZHthm003}.

\end{proof}

\section{Proof of Theorem \ref{OMG123}}\label{SEC005}
For $i,j=1,2$ and $\alpha, \beta=1,2,...,\frac{d(d+1)}{2}$, write
\begin{align*}
a_{ij}^{\alpha\beta}:=-\int_{\partial{D}_{j}}\frac{\partial v_{i}^{\alpha}}{\partial \nu_0}\large\Big|_{+}\cdot\psi_{\beta},\quad b_j^{\beta}:=-\int_{\partial D}\frac{\partial v_{j}^{\beta}}{\partial \nu_0}\large\Big|_{+}\cdot\varphi.
\end{align*}
Then it follows from the fourth line of \eqref{La.002} that
\begin{align}\label{ATKNQ001}
\begin{cases}
\sum\limits_{\alpha=1}^{\frac{d(d+1)}{2}}(C_{1}^\alpha-C_{2}^{\alpha}) a_{11}^{\alpha\beta}+\sum\limits_{\alpha=1}^{\frac{d(d+1)}{2}}C_{2}^\alpha\sum\limits^{2}_{i=1}a_{i1}^{\alpha\beta}=b_1^\beta,\\
\sum\limits_{\alpha=1}^{\frac{d(d+1)}{2}}(C_{1}^\alpha-C_{2}^{\alpha}) a_{12}^{\alpha\beta}+\sum\limits_{\alpha=1}^{\frac{d(d+1)}{2}}C_{2}^\alpha\sum\limits^{2}_{i=1}a_{i2}^{\alpha\beta}=b_2^\beta.
\end{cases}
\end{align}
Adding the first line of \eqref{ATKNQ001} to the second line, we obtain
\begin{align}\label{JGALP001}
\begin{cases}
\sum\limits_{\alpha=1}^{\frac{d(d+1)}{2}}(C_{1}^\alpha-C_{2}^{\alpha}) a_{11}^{\alpha\beta}+\sum\limits_{\alpha=1}^{\frac{d(d+1)}{2}}C_{2}^\alpha \sum\limits^{2}_{i=1}a_{i1}^{\alpha\beta}=b_1^\beta,\\
\sum\limits_{\alpha=1}^{\frac{d(d+1)}{2}}(C_{1}^{\alpha}-C_{2}^{\alpha})\sum\limits^{2}_{j=1}a_{1j}^{\alpha\beta}+\sum\limits_{\alpha=1}^{\frac{d(d+1)}{2}}C_{2}^\alpha \sum\limits^{2}_{i,j=1}a_{ij}^{\alpha\beta}=\sum\limits^{2}_{i=1}b_{i}^{\beta}.
\end{cases}
\end{align}
It is worth emphasizing that as shown in \eqref{JGALP001}, we utilize all the systems of equations in linear decomposition to calculate the difference of $C_{1}^\alpha-C_{2}^{\alpha}$, $\alpha=1,2,...,\frac{d(d+1)}{2}$, which is quite different from the idea adopted in \cite{CL2019}. Moreover, our idea in this paper solves the difficulty faced in \cite{CL2019} and allows to capture the blow-up factor matrices for the generalized $C^{1,\gamma}$-inclusions and any boundary data in all dimensions and thus give a precise computation of $C_{1}^\alpha-C_{2}^{\alpha}$ in all cases.

For the sake of convenience, denote
\begin{align*}
&X^{1}=\big(C_{1}^1-C_{2}^{1},...,C_{1}^\frac{d(d+1)}{2}-C_{2}^{\frac{d(d+1)}{2}}\big)^{T},\quad X^{2}=\big(C_{2}^{1},...,C_{2}^{\frac{d(d+1)}{2}}\big)^{T},\\
&Y^{1}=(b_1^1,...,b_{1}^{\frac{d(d+1)}{2}})^{T},\quad Y^{2}=\bigg(\sum\limits^{2}_{i=1}b_{i}^{1},...,\sum\limits^{2}_{i=1}b_{i}^{\frac{d(d+1)}{2}}\bigg)^{T},
\end{align*}
and
\begin{align*}
&\mathbb{A}=(a_{11}^{\alpha\beta})_{\frac{d(d+1)}{2}\times\frac{d(d+1)}{2}},\quad \mathbb{B}=\bigg(\sum\limits^{2}_{i=1}a_{i1}^{\alpha\beta}\bigg)_{\frac{d(d+1)}{2}\times\frac{d(d+1)}{2}},\\
&\mathbb{C}=\bigg(\sum\limits^{2}_{j=1}a_{1j}^{\alpha\beta}\bigg)_{\frac{d(d+1)}{2}\times\frac{d(d+1)}{2}},\quad \mathbb{D}=\bigg(\sum\limits^{2}_{i,j=1}a_{ij}^{\alpha\beta}\bigg)_{\frac{d(d+1)}{2}\times\frac{d(d+1)}{2}}.
\end{align*}
Therefore, we rewrite \eqref{JGALP001} as
\begin{gather}\label{PLA001}
\begin{pmatrix} \mathbb{A}&\mathbb{B} \\  \mathbb{C}&\mathbb{D}
\end{pmatrix}
\begin{pmatrix}
X^{1}\\
X^{2}
\end{pmatrix}=
\begin{pmatrix}
Y^{1}\\
Y^{2}
\end{pmatrix}.
\end{gather}
The following sections aim to solve the systems of equations \eqref{PLA001}. We would like to point out that by using the symmetry of $a_{ij}^{\alpha\beta}=a_{ji}^{\beta\alpha}$, we obtain that $\mathbb{C}=\mathbb{B}^{T}$.

\begin{lemma}\label{KM323}
Assume as in Theorems \ref{ZHthm002} and \ref{ZHthm003}. Then for a sufficiently small $\varepsilon>0$,
\begin{align*}
b_{i}^{\beta}=b_{i}^{\ast\beta}+O(\varepsilon^{\frac{\gamma}{1+2\gamma}}),\quad i=1,2,\;\beta=1,2,...,\frac{d(d+1)}{2},
\end{align*}
which yields that
\begin{align*}
\sum\limits^{2}_{i=1}b_{i}^{\beta}=\sum\limits^{2}_{i=1}b_{i}^{\ast\beta}+O(\varepsilon^{\frac{\gamma}{1+2\gamma}}).
\end{align*}

\end{lemma}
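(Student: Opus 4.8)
The plan is to compare the boundary integrals defining $b_i^\beta$ on $\Omega$ with those defining $b_i^{\ast\beta}$ on $\Omega^\ast$, using the fact that the only difference between the two settings is the presence of the thin neck $\Omega_R$ of width $\varepsilon$ versus the cusp at the origin. First I would recall that $b_i^\beta = -\int_{\partial D}\frac{\partial v_i^\beta}{\partial\nu_0}\big|_+\cdot\varphi$ and $b_i^{\ast\beta} = -\int_{\partial D}\frac{\partial v_i^{\ast\beta}}{\partial\nu_0}\big|_+\cdot\varphi$, so the claim reduces to estimating $\|v_i^\beta - v_i^{\ast\beta}\|$ in a norm strong enough to control the conormal derivative on $\partial D$. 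Since $\partial D$ sits at a fixed positive distance from the inclusions, standard interior/boundary elliptic estimates (Agmon–Douglis–Nirenberg, as already cited in the excerpt) give $\|\nabla v_i^\beta - \nabla v_i^{\ast\beta}\|_{C^0(\partial D)} \le C\|v_i^\beta - v_i^{\ast\beta}\|_{L^2(\Omega\setminus\Omega_R)}$ or a similar away-from-the-neck bound, so the whole problem localizes to estimating the difference in an $L^2$ (or energy) sense.

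The key step is an energy comparison between $v_i^\beta$ (defined on $\Omega = D\setminus\overline{D_1\cup D_2}$ with $D_1 = D_1^\ast + (0',\varepsilon)$) and $v_i^{\ast\beta}$ (defined on $\Omega^\ast = D\setminus\overline{D_1^\ast\cup D_2}$). I would introduce a common comparison domain — either $\Omega\cap\Omega^\ast$ or a suitable diffeomorphic identification that is the identity outside $\Omega_{2R}$ — and build a test function interpolating between the two solutions using the Keller-type function $\bar v$ and the cutoff $\rho$ from \eqref{Le2.021}. The difference $v_i^\beta - v_i^{\ast\beta}$ (after the identification) solves a Lamé system with zero boundary data on $\partial D$ and a controlled discrepancy on $\partial D_1$ versus $\partial D_1^\ast$; its energy is dominated by the energy of any admissible extension supported near the neck. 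Because $v_i^{\ast\beta}$ is bounded and $C^1$ up to $\partial D_1^\ast\setminus\{0\}$ and the neck region $\Omega_R$ has width of order $\varepsilon$ with the $C^{1,\gamma}$ pinch $h_1-h_2 \approx \tau|x'|^{1+\gamma}$, the energy contribution from $\Omega_R$ scales like a power of $\varepsilon$. Tracking this scaling carefully — integrating $|\nabla\bar v|^2 \sim \delta^{-2}$ against the volume element over $\{|x'|\lesssim\varepsilon^{1/(1+\gamma)}\}$ and over the dyadic annuli $\{2^{-k-1}\le|x'|\le 2^{-k}\}$ out to $|x'|=R$ — should yield an energy of size $O(\varepsilon^{2\gamma/(1+2\gamma)})$, whence $\|v_i^\beta - v_i^{\ast\beta}\|_{H^1} = O(\varepsilon^{\gamma/(1+2\gamma)})$ and the stated bound follows. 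The second assertion, $\sum_{i=1}^2 b_i^\beta = \sum_{i=1}^2 b_i^{\ast\beta} + O(\varepsilon^{\gamma/(1+2\gamma)})$, is then immediate by summing the two estimates.

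I expect the main obstacle to be the precise bookkeeping of the exponent $\gamma/(1+2\gamma)$ in the energy estimate near the neck: one must choose the right truncation radius (the natural scale $|x'|\sim\varepsilon^{1/(1+\gamma)}$ at which $\delta(x')\sim\varepsilon$), split the neck integral at that scale, and balance the inner contribution (where $\delta\approx\varepsilon$ and the volume is $\sim\varepsilon^{1+(d-1)/(1+\gamma)}$) against the outer dyadic sum (where $\delta\approx|x'|^{1+\gamma}$), and then verify that the resulting power matches $\gamma/(1+2\gamma)$ after taking the square root. A secondary technical point is handling the geometric mismatch between $\partial D_1$ and $\partial D_1^\ast$: one should either compose $v_i^{\ast\beta}$ with the vertical translation $x\mapsto x - (0',\varepsilon)$ restricted to a neighborhood of $D_1^\ast$ and estimate the commutator error (which is $O(\varepsilon)$ in $C^1$ away from the origin, hence negligible), or work directly with the variational formulation on a fixed reference domain. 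Once the localization to $\Omega_R$ is set up, the remaining estimates are routine applications of the Cauchy–Schwarz inequality, Korn's inequality, and the elliptic regularity lemmas (Lemmas \ref{CL001} and \ref{CL002}) already established in the excerpt.
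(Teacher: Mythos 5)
Your reduction of the lemma to controlling $v_i^{\beta}-v_i^{\ast\beta}$ in a weak norm away from the neck (and then using elliptic estimates near $\partial D$ to pass to the conormal derivative) is the same first step as the paper. The gap is in the core mechanism you propose to get that smallness. An energy/variational comparison of the two solutions cannot work as stated, because the discrepancy between the two problems is \emph{not} small near the touching point: on $\partial D_1^{\ast}$ close to the origin one has $v_1^{\ast\beta}=\psi_{\beta}$ while $v_1^{\beta}\approx\psi_{\beta}\,\frac{h_1-h_2}{\varepsilon+h_1-h_2}\to 0$, so the boundary mismatch is of order one, and in the common neck the difference transitions from $0$ to $O(1)$ across a gap of size $h_1-h_2\sim\tau|x'|^{1+\gamma}$. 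Its local energy density is therefore $\sim (h_1-h_2)^{-1}$ per unit of $x'$, and in $d=2$ the resulting integral $\int_{0}\frac{dx_1}{h_1-h_2}$ diverges -- consistent with the fact, noted in the paper, that $a_{11}^{\ast\alpha\alpha}=+\infty$ for $\alpha\le d$ when $d=2$, which is precisely why the starred energies are ``only valid in some cases''. So $\|v_1^{\beta}-v_1^{\ast\beta}\|_{H^1}$ is not $O(\varepsilon^{\gamma/(1+2\gamma)})$; in two dimensions it is not even finite over the neck, and the exponent $\gamma/(1+2\gamma)$ cannot come out of an energy balance at the scale $|x'|\sim\varepsilon^{1/(1+\gamma)}$ as you suggest.

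The paper's proof uses a different, pointwise mechanism that circumvents the neck entirely: it excises a cylinder $\mathcal{C}_{\varepsilon^{\theta}}$ around the touching point and shows that on the boundary of the excised domain $D\setminus(\overline{D_1\cup D_1^{\ast}\cup D_2\cup\mathcal{C}_{\varepsilon^{\theta}}})$ the difference is small in $L^{\infty}$: on $\partial D_1\setminus D_1^{\ast}$ it is $O(\varepsilon)$ by boundedness of $\nabla v_1^{\ast\beta}$ there; on $\partial D_1^{\ast}$ outside the cylinder it is $O(\varepsilon^{1-(1+\gamma)\theta})$ by the gradient bound $|\partial_{x_d}v_1^{\beta}|\lesssim\delta^{-1}$; and on the lateral cylinder surface it is $O(\varepsilon^{1-(1+\gamma)\theta}+\varepsilon^{\gamma\theta})$ by integrating $\partial_{x_d}$ of the difference vertically from $\partial D_2$, using the explicit Keller functions $\bar u_1^{\beta},\bar u_1^{\ast\beta}$ and Corollary \ref{thm86}. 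Balancing $1-(1+\gamma)\theta=\gamma\theta$ gives $\theta=\frac{1}{1+2\gamma}$ and the rate $\varepsilon^{\gamma/(1+2\gamma)}$; the maximum principle for the Lam\'e system (from \cite{MMN2007}, not an energy inequality) then propagates the bound into the excised domain, and standard boundary estimates at $\partial D$ finish the proof. If you want to salvage your route, you would have to localize the comparison away from $\mathcal{C}_{\varepsilon^{\theta}}$ in essentially this way; as written, the claim that the discrepancy on $\partial D_1$ versus $\partial D_1^{\ast}$ is ``controlled'' and that the difference has energy $O(\varepsilon^{2\gamma/(1+2\gamma)})$ is false near the touching point.
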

\begin{proof}
Take the case of $i=1$ for instance. The case of $i=2$ is the same and thus omitted here. Recalling the definition of $b_{1}^{\beta}$, it follows from \eqref{Le2.01222} that for $\beta=1,2,...,\frac{d(d+1)}{2}$,
\begin{align*}
b_1^{\beta}-b_{1}^{\ast\beta}=-\int_{\partial D}\frac{\partial(v_{1}^{\beta}-v_{1}^{\ast\beta})}{\partial \nu_0}\large\Big|_{+}\cdot\varphi,
\end{align*}
where $v_{1}^{\ast\beta}$ and $v_{1}^{\beta}$ satisfy \eqref{qaz001111} and \eqref{qaz001}, respectively. For $0<t\leq2R$, write $\Omega_{t}^{\ast}:=\Omega^{\ast}\cap\{|x'|<t\}$. For $\beta=1,2,...,\frac{d(d+1)}{2}$, we define
\begin{align*}
\bar{u}^{\ast\beta}_{1}=&\psi_{\beta}\bar{v}^{\ast},
\end{align*}
where $\bar{v}^{\ast}$ verifies that $\bar{v}^{\ast}=1$ on $\partial D_{1}^{\ast}\setminus\{0\}$, $\bar{v}^{\ast}=0$ on $\partial D_{2}\cup\partial D$, and
\begin{align*}
\bar{v}^{\ast}(x',x_{d})=\frac{x_{d}-h_{2}(x')}{h_{1}(x')-h_{2}(x')},\;\,\mathrm{in}\;\Omega_{2R}^{\ast},\quad\|\bar{v}^{\ast}\|_{C^{2}(\Omega^{\ast}\setminus\Omega^{\ast}_{R})}\leq C.
\end{align*}
Using ({\bf{H1}})--({\bf{H2}}), we deduce that for $x\in\Omega_{R}^{\ast}$, $\beta=1,2,...,\frac{d(d+1)}{2}$,
\begin{align}\label{LKT6.003}
|\nabla_{x'}(\bar{u}_{1}^{\beta}-\bar{u}_{1}^{\ast\beta})|\leq\frac{C}{|x'|},\quad|\partial_{x_{d}}(\bar{u}_{1}^{\beta}-\bar{u}_{1}^{\ast\beta})|\leq\frac{C\varepsilon}{|x'|^{1+\gamma}(\varepsilon+|x'|^{1+\gamma})}.
\end{align}
A direct application of Corollary \ref{thm86} yields that for $\beta=1,2,...,\frac{d(d+1)}{2}$,
\begin{align}\label{LKT6.005}
|\nabla_{x'}v_{1}^{\ast\beta}|\leq\frac{C}{|x'|},\quad|\partial_{x_{d}}v_{1}^{\ast\beta}|\leq\frac{C}{|x'|^{1+\alpha}},\quad|\nabla(v_{1}^{\ast\beta}-\bar{u}_{1}^{\ast\beta})|\leq\frac{C}{|x'|},\quad x\in\Omega_{R}^{\ast}.
\end{align}
For $0<t<R$, define
\begin{align*}
\mathcal{C}_{t}:=\left\{x\in\mathbb{R}^{d}\,\Big|\;2\min_{|x'|\leq t}h_{2}(x')\leq x_{d}\leq\varepsilon+2\max_{|x'|\leq t}h_{1}(x'),\;|x'|<t\right\}.
\end{align*}

Observe that for $\beta=1,2,...,\frac{d(d+1)}{2}$, $v_{1}^{\beta}-v_{1}^{\ast\beta}$ verifies
\begin{align*}
\begin{cases}
\mathcal{L}_{\lambda,\mu}(v_{1}^{\beta}-v_{1}^{\ast\beta})=0,&\mathrm{in}\;\,D\setminus(\overline{D_{1}\cup D_{1}^{\ast}\cup D_{2}}),\\
v_{1}^{\beta}-v_{1}^{\ast\beta}=\psi_{\beta}-v_{1}^{\ast\beta},&\mathrm{on}\;\,\partial D_{1}\setminus D_{1}^{\ast},\\
v_{1}^{\beta}-v_{1}^{\ast\beta}=v_{1}^{\beta}-\psi_{\beta},&\mathrm{on}\;\,\partial D_{1}^{\ast}\setminus(D_{1}\cup\{0\}),\\
v_{1}^{\beta}-v_{1}^{\ast\beta}=0,&\mathrm{on}\;\,\partial D_{2}\cup\partial D.
\end{cases}
\end{align*}
First, in view of the standard boundary and interior estimates of elliptic systems, we obtain that for $x\in\partial D_{1}\setminus D_{1}^{\ast}$,
\begin{align}\label{LKT6.007}
|(v_{1}^{\beta}-v_{1}^{\ast\beta})(x',x_{d})|=|v_{1}^{\ast\beta}(x',x_{d}-\varepsilon)-v_{1}^{\ast\beta}(x',x_{d})|\leq C\varepsilon.
\end{align}
From \eqref{Le2.025}, we obtain that for $x\in\partial D_{1}^{\ast}\setminus(D_{1}\cup\mathcal{C}_{\varepsilon^{\theta}})$, $0<\theta<\frac{1}{1+\gamma}$,
\begin{align}\label{LKT6.008}
|(v_{1}^{\beta}-v_{1}^{\ast\beta})(x',x_{d})|=|v_{1}^{\beta}(x',x_{d})-v_{1}^{\beta}(x',x_{d}+\varepsilon)|\leq C\varepsilon^{1-(1+\gamma)\theta}.
\end{align}
Then in light of \eqref{Le2.025} and (\ref{LKT6.003})--(\ref{LKT6.005}), we derive that for $x\in\Omega_{R}^{\ast}\cap\{|x'|=\varepsilon^{\gamma}\}$,
\begin{align*}
|\partial_{x_{d}}(v_{1}^{\beta}-v_{1}^{\ast\beta})|\leq&|\partial_{x_{d}}(v_{1}^{\beta}-\bar{u}_{1}^{\beta})|+|\partial_{x_{d}}(\bar{u}_{1}^{\beta}-\bar{u}_{1}^{\ast\beta}|+|\partial_{x_{d}}(\bar{u}_{1}^{\beta}-\bar{u}_{1}^{\ast\beta})|\notag\\
\leq&C\left(\frac{1}{\varepsilon^{2(1+\gamma)\theta-1}}+\frac{1}{\varepsilon^{\theta}}\right).
\end{align*}
This, in combination with the fact that $v_{1}^{\beta}-v_{1}^{\ast\beta}=0$ on $\partial D_{2}$, yields that
\begin{align}
|(v_{1}^{\beta}-v_{1}^{\ast\beta})(x',x_{d})|=&|(v_{1}^{\beta}-v_{1}^{\ast\beta})(x',x_{d})-(v_{1}^{\beta}-v_{1}^{\ast\beta})(x',h_{2}(x'))|\notag\\
\leq&C\big(\varepsilon^{1-(1+\gamma)\theta}+\varepsilon^{\gamma\theta}\big).\label{LKT6.009}
\end{align}
Take $\theta=\frac{1}{1+2\gamma}$. Then we see from (\ref{LKT6.007})--(\ref{LKT6.009}) that
$$|v_{1}^{\beta}-v_{1}^{\ast\beta}|\leq C\varepsilon^{\frac{\gamma}{1+2\gamma}},\quad\;\,\mathrm{on}\;\,\partial\big(D\setminus\big(\overline{D_{1}\cup D_{1}^{\ast}\cup D_{2}\cup\mathcal{C}_{\varepsilon^{\frac{1}{1+2\gamma}}}}\big)\big).$$
Making use of the maximum principle for the Lam\'{e} system in \cite{MMN2007}, we obtain
\begin{align}\label{AST123}
|v_{1}^{\beta}-v_{1}^{\ast\beta}|\leq C\varepsilon^{\frac{\gamma}{1+2\gamma}},\quad\;\,\mathrm{in}\;\,D\setminus\big(\overline{D_{1}\cup D_{1}^{\ast}\cup D_{2}\cup\mathcal{C}_{\varepsilon^{\frac{1}{1+2\gamma}}}}\big),
\end{align}
which, together with the standard boundary estimates, reads that
\begin{align*}
|\nabla(v_{1}^{\beta}-v_{1}^{\ast\beta})|\leq C\varepsilon^{\frac{\gamma}{1+2\gamma}},\quad\mathrm{on}\;\,\partial D.
\end{align*}
Consequently,
\begin{align*}
|b_1^{\beta}-b_{1}^{\ast\beta}|\leq\left|\int_{\partial D}\frac{\partial(v_{1}^{\beta}-v_{1}^{\ast\beta})}{\partial \nu_0}\large\Big|_{+}\cdot\varphi\right|\leq C\|\varphi\|_{C^{0}(\partial D)}\varepsilon^{\frac{\gamma}{1+2\gamma}}.
\end{align*}

\end{proof}

Recalling the definition of $a_{ij}^{\alpha\beta}$, it follows from \eqref{Le2.01222} that for $i,j=1,2$ and $\alpha,\beta=1,2,...,\frac{d(d+1)}{2}$,
\begin{align*}
a_{ij}^{\alpha\beta}=\int_{\Omega}(\mathbb{C}^0e(v_{i}^{\alpha}), e(v_j^\beta))dx.
\end{align*}
For simplicity, we denote
\begin{align}\label{APDN001}
\tilde{\varepsilon}(\gamma,\sigma)=&
\begin{cases}
\varepsilon^{\frac{\sigma}{1+\gamma}},&\gamma>\sigma,\\
\varepsilon^{\frac{\gamma}{1+\gamma}}|\ln\varepsilon|,&\gamma=\sigma,\\
\varepsilon^{\frac{\gamma}{1+\gamma}},&0<\gamma<\sigma.
\end{cases}
\end{align}
\begin{lemma}\label{lemmabc}
Assume as above. Then, for a sufficiently small $\varepsilon>0$,

$(i)$ for $\alpha=1,2,...d$, if $d=2$,
\begin{align}\label{zzw001}
a_{11}^{\alpha\alpha}=\mathcal{L}_{2}^{\alpha}\mathcal{M}_{\gamma,\tau}\varepsilon^{-\frac{\gamma}{1+\gamma}}(1+O(\tilde{\varepsilon}(\gamma,\sigma))),
\end{align}
and if $d\geq3$,
\begin{align}\label{zzw002}
a_{11}^{\alpha\alpha}=a_{11}^{\ast\alpha\alpha}+O(1)\bar{\varepsilon}(\gamma,d),
\end{align}
where $\bar{\varepsilon}(\gamma,d)$ and $\tilde{\varepsilon}(\gamma,\sigma)$ are defined by \eqref{NZKL001} and \eqref{APDN001}, respectively.

$(ii)$ for $\alpha=d+1,...,\frac{d(d+1)}{2}$,
\begin{align}\label{zzw003}
a_{11}^{\alpha\alpha}=a_{11}^{\ast\alpha\alpha}+O(1)\varepsilon^{\frac{\gamma}{2(1+2\gamma)}};
\end{align}

$(iii)$ if $d=2$, for $\alpha,\beta=1,2,\alpha\neq \beta$, then
\begin{align}\label{LVZQ001}
a_{11}^{12}=a_{11}^{21}=O(1)|\ln\varepsilon|,
\end{align}
and if $d\geq3$, for $\alpha,\beta=1,2,...,d,\,\alpha\neq\beta$, then
\begin{align}\label{ADzc}
a_{11}^{\alpha\beta}=a_{11}^{\beta\alpha}=&a_{11}^{\ast\alpha\beta}+O(1)
\begin{cases}
\varepsilon^{\frac{\gamma^{2}}{2(1+2\gamma)(1+\gamma)^{2}}},&d=3,\\
\varepsilon^{\frac{\gamma^{2}}{2(1+2\gamma)(1+\gamma)}},&d\geq4,
\end{cases}
\end{align}
and if $d\geq2$, for $\alpha=1,2,...,d,\,\beta=d+1,...,\frac{d(d+1)}{2},$ then
\begin{align}\label{LVZQ0011gdw}
a^{\alpha\beta}_{11}=a_{11}^{\beta\alpha}=&a_{11}^{\ast\alpha\beta}+O(1)
\begin{cases}
\varepsilon^{\frac{\gamma^{2}}{2(1+2\gamma)(1+\gamma)^{2}}},&d=2,\\
\varepsilon^{\frac{\gamma^{2}}{2(1+2\gamma)(1+\gamma)}},&d\geq3,
\end{cases}
\end{align}
and if $d\geq3$, for $\alpha,\beta=d+1,...,\frac{d(d+1)}{2},\,\alpha\neq\beta$, then
\begin{align}\label{zzw007}
a_{11}^{\alpha\beta}=a_{11}^{\beta\alpha}=a_{11}^{\ast\alpha\beta}+O(1)\varepsilon^{\frac{\gamma}{2(1+2\gamma)}};
\end{align}

$(iv)$ for $\alpha,\beta=1,2,...,\frac{d(d+1)}{2}$,
\begin{align}\label{AZQ001}
\sum\limits^{2}_{i=1}a_{i1}^{\alpha\beta}=&\sum\limits^{2}_{i=1}a_{i1}^{\ast\alpha\beta}+O(\varepsilon^{\frac{\gamma}{2(1+\gamma)}}),\quad\sum\limits^{2}_{j=1}a_{1j}^{\alpha\beta}=\sum\limits^{2}_{j=1}a_{1j}^{*\alpha\beta}+O(\varepsilon^{\frac{\gamma}{2(1+\gamma)}}),
\end{align}
and
\begin{align}\label{AZQ00111}
\sum\limits^{2}_{i,j=1}a_{ij}^{\alpha\beta}=\sum\limits^{2}_{i,j=1}a_{ij}^{\ast\alpha\beta}+O(\varepsilon^{\frac{\gamma}{2(1+\gamma)}}).
\end{align}

\end{lemma}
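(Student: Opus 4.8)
The plan is to reduce every quantity in the statement to an integral over the neck $\Omega_{R}$ (resp. $\Omega_{R}^{\ast}$), to insert the explicit profiles $\bar u_{1}^{\alpha}=\psi_{\alpha}\bar v$ via Corollary \ref{thm86} (and its $\varepsilon=0$ analogue on $\Omega^{\ast}$, already used in the proof of Lemma \ref{KM323}), and then to evaluate the resulting model integrals with a careful accounting of the remainders. The first step is the elementary reductions: by \eqref{Le2.01222} and $\mathcal{L}_{\lambda,\mu}v_{i}^{\alpha}=0$ one has $a_{ij}^{\alpha\beta}=\int_{\Omega}(\mathbb{C}^{0}e(v_{i}^{\alpha}),e(v_{j}^{\beta}))\,dx$, and the part over $\Omega\setminus\Omega_{R}$ is $O(1)$ by the interior and boundary estimates for the Lam\'{e} system; for the sums in $(iv)$, $v_{1}^{\alpha}+v_{2}^{\alpha}$ solves \eqref{P2.008} with the same data $\psi_{\alpha}$ on $\partial D_{1}$ and $\partial D_{2}$, so Corollary \ref{coro00z} makes its strain exponentially small in $\Omega_{R}$, and hence $\sum_{i}a_{i1}^{\alpha\beta}$ and $\sum_{i,j}a_{ij}^{\alpha\beta}$ coincide with the corresponding integrals over $\Omega\setminus\Omega_{R}$ up to an exponentially small error (and similarly for the starred objects over $\Omega^{\ast}\setminus\Omega_{R}^{\ast}$).

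On $\Omega_{R}$, Corollary \ref{thm86} gives $e(v_{1}^{\alpha})=e(\bar u_{1}^{\alpha})+O(\delta^{-1/(1+\gamma)})$ for $\alpha\le d$ and $+O(1)$ for $\alpha>d$, and the leading part of $e(\bar u_{1}^{\alpha})$ is $\delta^{-1}$ times the symmetrisation of $\psi_{\alpha}\otimes e_{d}$; contracting with $\mathbb{C}^{0}$ shows $(\mathbb{C}^{0}e(\bar u_{1}^{\alpha}),e(\bar u_{1}^{\beta}))=\mathcal{L}_{d}^{\alpha}\,\delta^{-2}$ to leading order when $\alpha=\beta\le d$, the coupling of distinct coordinate directions vanishing by the structure of $\mathbb{C}^{0}$; this is why the diagonal entries with $\alpha\le d$ (whose leading part $\delta^{-2}$ is integrable over the $(d-1)$-ball only when $d\ge3$) behave differently from all the others, for which the $\delta^{-2}$ part cancels and $a_{11}^{\ast\alpha\beta}$ is finite in every dimension. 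For $d=2$, integrating first in $x_{2}$ turns $\int_{\Omega_{R}}\delta^{-2}\,dx$ into $\int_{|x_{1}|<R}\delta(x_{1})^{-1}\,dx_{1}$; inserting condition ({\bf{H1}}), rescaling $x_{1}=(\varepsilon/\tau)^{1/(1+\gamma)}t$ and using $\int_{\mathbb{R}}(1+|t|^{1+\gamma})^{-1}\,dt=2\Gamma_{\gamma}/(1+\gamma)$ gives the factor $\mathcal{M}_{\gamma,\tau}\,\varepsilon^{-\gamma/(1+\gamma)}$, which is \eqref{zzw001}. The relative remainder is the sum of: the contribution of the $O(|x'|^{1+\gamma+\sigma})$ term in ({\bf{H1}}), equal to $\varepsilon^{\sigma/(1+\gamma)}$, $\varepsilon^{\gamma/(1+\gamma)}|\ln\varepsilon|$ or $\varepsilon^{\gamma/(1+\gamma)}$ according as $\sigma<\gamma$, $\sigma=\gamma$ or $\sigma>\gamma$, that is, exactly $\tilde\varepsilon(\gamma,\sigma)$; the contribution of the strain error $O(\delta^{-1/(1+\gamma)})$, which after an integration by parts exploiting $v_{1}^{\alpha}-\bar u_{1}^{\alpha}=0$ on $\Gamma_{R}^{\pm}$ is relatively negligible; and the lower-order $\nabla_{x'}\bar v$, $\nabla\psi_{\alpha}$ terms, smaller still. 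The off-diagonal formulas \eqref{LVZQ001} and \eqref{LVZQ0011gdw} for $d=2$ follow since the $\delta^{-2}$ term cancels and only $\int_{\Omega_{R}}\delta^{-1}\,dx=O(|\ln\varepsilon|)$, respectively a convergent integral plus a small power of $\varepsilon$, survives.

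For parts $(i)$--$(iii)$ with $d\ge3$ I would fix $\theta\in(0,1/(1+\gamma))$ and split $a_{11}^{\alpha\beta}=\int_{\Omega_{\varepsilon^{\theta}}}(\cdots)+\int_{\Omega\setminus\Omega_{\varepsilon^{\theta}}}(\cdots)$, and likewise $a_{11}^{\ast\alpha\beta}$. Each near piece is bounded, through Corollary \ref{thm86} and its $\varepsilon=0$ form, by a positive power of $\varepsilon^{\theta}$ --- of order $\varepsilon^{\theta(d-2-\gamma)}$ in the diagonal case and by a larger power in the off-diagonal cases where the $\delta^{-2}$ part drops. In the far region $\Omega$ and $\Omega^{\ast}$ differ only in a slab of width $\varepsilon$ on which the integrand is $O(|x'|^{-2})$, contributing $O(\varepsilon|\ln\varepsilon|)$ or $O(\varepsilon)$; on the common far region one writes $v_{1}^{\alpha}=v_{1}^{\ast\alpha}+w_{1}^{\alpha}$, runs the maximum-principle argument of Lemma \ref{KM323} (cf. \eqref{LKT6.009}, \eqref{AST123}) to get $\|w_{1}^{\alpha}\|_{L^{\infty}(\Omega\setminus\Omega_{\varepsilon^{\theta}})}\le C\max\{\varepsilon^{\theta\gamma},\varepsilon^{1-\theta(1+\gamma)}\}$, tests the equation for $w_{1}^{\alpha}$ against itself to convert this into an $L^{2}$ bound for $e(w_{1}^{\alpha})$, and applies Cauchy--Schwarz against $e(v_{1}^{\ast\beta})\in L^{2}$. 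Adding the two contributions and optimising $\theta$ --- whose optimum sits near $1/(1+2\gamma)$ but whose balance involves two $\varepsilon$-rates crossing at a dimension-dependent value --- produces the exponents of $\bar\varepsilon(\gamma,d)$ in \eqref{NZKL001}, hence \eqref{zzw002}, \eqref{zzw003}, \eqref{ADzc}, \eqref{LVZQ0011gdw} and \eqref{zzw007}. For part $(iv)$, after the localisation above, $\sum_{i}a_{i1}^{\alpha\beta}-\sum_{i}a_{i1}^{\ast\alpha\beta}$ (and the $\sum_{i,j}$ analogue) is controlled by the $L^{2}$-closeness off the neck of $v_{i}^{\alpha},v_{1}^{\beta}$ to $v_{i}^{\ast\alpha},v_{1}^{\ast\beta}$; the Lemma \ref{KM323}-type argument with cutoff $|x'|=R$ and $\theta=1/(1+2\gamma)$, followed by Cauchy--Schwarz, gives $O(\varepsilon^{\gamma/(2(1+\gamma))})$, which is \eqref{AZQ001} and \eqref{AZQ00111}.

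I expect the main obstacle to be the sharp remainder in $d\ge3$: one must simultaneously extract the precise leading behaviour of the neck integral, bound the near-neck remainder by a power $\varepsilon^{\theta(d-2-\gamma)}$ (or its off-diagonal refinement), and bound the far-region difference by the square root of $\max\{\varepsilon^{\theta\gamma},\varepsilon^{1-\theta(1+\gamma)}\}$ coming from the maximum-principle estimate; balancing these competing quantities over $\theta$ and tracking where the two $\varepsilon$-rates cross --- which genuinely depends on $d$ and accounts for the $\min\{\cdot,\cdot\}$ in \eqref{NZKL001} --- is where the technical weight of the lemma lies, the point being to get the optimal exponents rather than merely some positive power of $\varepsilon$.
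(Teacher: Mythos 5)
Your overall architecture (near/middle/far splitting, explicit profile $\bar u_1^\alpha=\psi_\alpha\bar v$ in the neck, maximum-principle closeness to the touching configuration, optimization of the cylinder radius) is the same as the paper's, but two steps as written do not go through. First, in part $(iv)$ you discard the neck by claiming that Corollary \ref{coro00z} makes $e(v_1^\alpha+v_2^\alpha)$ ``exponentially small in $\Omega_R$''. The bound there is $C\delta^{-d/2}e^{-1/(2C\delta^{\gamma/(1+\gamma)})}$, which is small only where $\delta$ is small; near $|x'|\sim R$ it is an $\varepsilon$-independent constant, so $\int_{\Omega_R}(\mathbb{C}^0e(v_1^\alpha+v_2^\alpha),e(v_1^\beta))$ is generically $O(1)$ and cannot be dropped. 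One must compare the neck contributions of the perturbed and touching problems, which is exactly what the paper does: it shows $|v_1^\alpha+v_2^\alpha-v_1^{*\alpha}-v_2^{*\alpha}|\le C\varepsilon$ on the boundary down to the cylinder of radius $\varepsilon^{1/(1+\gamma)}$ (using the exponential decay only where $\delta$ is small, cf. \eqref{LAQ009}), upgrades this to the gradient bound \eqref{ZQWZW001} outside $\mathcal{C}_{\varepsilon^{1/(2(1+\gamma)^2)}}$, and bounds the tiny cylinder directly. Your fallback — a Lemma \ref{KM323}-type estimate with $\theta=1/(1+2\gamma)$ followed by Cauchy--Schwarz — starts from the weaker closeness $\varepsilon^{\gamma/(1+2\gamma)}$ and, with the square-root loss you yourself build in, can only give $O(\varepsilon^{\gamma/(2(1+2\gamma))})$, not the stated $O(\varepsilon^{\gamma/(2(1+\gamma))})$; the better exponent in $(iv)$ comes precisely from the $O(\varepsilon)$ boundary closeness of the \emph{sums}, which your reduction throws away.

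Second, in part $(iii)$ the assertion that for all off-diagonal pairs ``the $\delta^{-2}$ part cancels by the structure of $\mathbb{C}^0$'' is only true for pairs of translations. For translation--rotation and rotation--rotation pairs sharing an index, $(\mathbb{C}^0e(\bar u_1^\alpha),e(\bar u_1^\beta))$ contains surviving terms of the form $\pm\mu\,x_{j}\sum_k(\partial_{x_k}\bar v)^2$ or $\mu\,x_i x_j\sum_k(\partial_{x_k}\bar v)^2$ (the paper's \eqref{ZH001}--\eqref{ZH006}), of pointwise size $|x'|\delta^{-2}$. The stated exponents in \eqref{LVZQ0011gdw} and \eqref{zzw007} are obtained only by combining the thin-slab identity $|\int_{h_2}^{\varepsilon+h_1}x_d\,dx_d|\le C\delta^2$ with the standing assumption that $h_1-h_2$ is even in each $x_i$, so that the odd-in-$x_i$ terms integrate to zero over the symmetric cross-section; a direct size estimate without this parity gives only $\varepsilon^{\hat\theta(1-\gamma)}$ when $d=2$ (and $\varepsilon^{\hat\theta(2-\gamma)}$ when $d=3$), which misses the claimed rate unless $\gamma$ is small. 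Your sketch never invokes this symmetry, so it cannot reach the stated exponents. Relatedly, for $d\ge3$ in parts $(i)$--$(iii)$ you leave the remainder bookkeeping as an optimization over $\theta$ of an energy/Caccioppoli bound; the paper instead fixes $\bar\theta=\gamma^2/(2(1+2\gamma)(1+\gamma)^2)$ and converts the $L^\infty$ closeness of Lemma \ref{KM323} into the pointwise bound $|\nabla(v_1^\alpha-v_1^{*\alpha})|\le C\varepsilon^{(1+\gamma)\bar\theta}$ by rescaling at the local scale $|z'|^{1+\gamma}$ plus interpolation, which is what actually produces the dimension-dependent exponents of \eqref{NZKL001}; an energy estimate run globally across the thin part of the far region degenerates with the neck width, so to make your variant rigorous you would in effect have to reproduce that local-scale argument.
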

\begin{remark}
It is worth mentioning that each element of the coefficient matrix of the equations in \eqref{PLA001} is calculated accurately, especially the elements in the off-diagonal line. This is a vital improvement by contrast with the previous work \cite{CL2019} and enables to capture the blow-up factor matrices in all dimensions.
\end{remark}

\begin{proof}
{\bf Step 1. Proofs of (\ref{zzw001})--(\ref{zzw002})}. Let $\bar{\theta}=\frac{\gamma^{2}}{2(1+2\gamma)(1+\gamma)^{2}}$. For $\varepsilon^{\bar{\theta}}\leq|z'|\leq R$, we make use of the change of variable
\begin{align*}
\begin{cases}
x'-z'=|z'|^{1+\gamma}y',\\
x_{d}=|z'|^{1+\gamma}y_{d},
\end{cases}
\end{align*}
to rescale $\Omega_{|z'|+|z'|^{1+\gamma}}\setminus\Omega_{|z'|}$ and $\Omega_{|z'|+|z'|^{1+\gamma}}^{\ast}\setminus\Omega_{|z'|}^{\ast}$ into two nearly unit-size squares (or cylinders) $Q_{1}$ and $Q_{1}^{\ast}$, respectively. For $\alpha=1,2,...,d$, denote
\begin{align*}
V_{1}^{\alpha}(y)=v_{1}^{\alpha}(z'+|z'|^{1+\gamma}y',|z'|^{1+\gamma}y_{d}),\quad\mathrm{in}\;Q_{1},
\end{align*}
and
\begin{align*}
V_{1}^{\ast\alpha}(y)=v_{1}^{\ast\alpha}(z'+|z'|^{1+\gamma}y',|z'|^{1+\gamma}y_{d}),\quad\mathrm{in}\;Q_{1}^{\ast}.
\end{align*}
In light of the fact that $0<V_{1}^{\alpha},V_{1}^{\ast\alpha}<1$, it follows from the standard elliptic estimates that
\begin{align*}
\|V_{1}^{\alpha}\|_{C^{1,\gamma}(Q_{1})}\leq C,\quad\|V_{1}^{\ast\alpha}\|_{C^{1,\gamma}(Q_{1}^{\ast})}\leq C.
\end{align*}
A direct application of an interpolation with \eqref{AST123} gives that
\begin{align*}
|\nabla(V_{1}^{\alpha}-V_{1}^{\ast\alpha})|\leq C\varepsilon^{\frac{\gamma}{1+2\gamma}(1-\frac{1}{1+\gamma})}\leq C\varepsilon^{\frac{\gamma^{2}}{(1+2\gamma)(1+\gamma)}}.
\end{align*}
Then back to $v_{1}^{\alpha}-v_{1}^{\ast\alpha}$ and in view of $\varepsilon^{\bar{\theta}}\leq|z'|\leq R$, we obtain
\begin{align*}
|\nabla(v_{1}^{\alpha}-v_{1}^{\ast\alpha})(x)|\leq C\varepsilon^{\frac{\gamma^{2}}{(1+2\gamma)(1+\gamma)}}|z'|^{-1-\gamma}\leq C\varepsilon^{(1+\gamma)\bar{\theta}},\quad x\in\Omega^{\ast}_{|z'|+|z'|^{1+\gamma}}\setminus\Omega_{|z'|}^{\ast},
\end{align*}
which yields that
\begin{align}\label{con035}
|\nabla(v_{1}^{\alpha}-v_{1}^{\ast\alpha})|\leq C\varepsilon^{(1+\gamma)\bar{\theta}},\quad\mathrm{in}\;D\setminus\big(\overline{D_{1}\cup D_{1}^{\ast}\cup\mathcal{C}_{\varepsilon^{\bar{\theta}}}}\big).
\end{align}

For $\alpha=1,2,...,d$, we decompose $a_{11}^{\alpha\alpha}$ into three parts as follows:
\begin{align*}
a_{11}^{\alpha\alpha}=&\int_{\Omega\setminus\Omega_{R}}(\mathbb{C}^{0}e(v_{1}^{\alpha}),e(v_{1}^{\alpha}))+\int_{\Omega_{\varepsilon^{\bar{\theta}}}}(\mathbb{C}^{0}e(v_{1}^{\alpha}),e(v_{1}^{\alpha}))+\int_{\Omega_{R}\setminus\Omega_{\varepsilon^{\bar{\theta}}}}(\mathbb{C}^{0}e(v_{1}^{\alpha}),e(v_{1}^{\alpha}))\nonumber\\
=&:\mathrm{I}+\mathrm{II}+\mathrm{III}.
\end{align*}
For the first term $\mathrm{I}$, due to the fact that $|\nabla v_{1}^{\alpha}|$ is bounded in $D_{1}^{\ast}\setminus(D_{1}\cup\Omega_{R})$ and $D_{1}\setminus D_{1}^{\ast}$ and the volume of $D_{1}^{\ast}\setminus(D_{1}\cup\Omega_{R})$ and $D_{1}\setminus D_{1}^{\ast}$ is of order $O(\varepsilon)$, we deduce from (\ref{con035}) that
\begin{align}\label{YUA123}
\mathrm{I}=&\int_{D\setminus(D_{1}\cup D_{1}^{\ast}\cup D_{2}\cup\Omega_{R})}(\mathbb{C}^{0}e(v_{1}^{\alpha}),e(v_{1}^{\alpha}))+O(1)\varepsilon\notag\\
=&\int_{D\setminus(D_{1}\cup D_{1}^{\ast}\cup D_{2}\cup\Omega_{R})}(\mathbb{C}^{0}e(v^{\ast\alpha}_{1}),e(v^{\ast\alpha}_{1}))+2\int_{D\setminus(D_{1}\cup D_{1}^{\ast}\cup D_{2}\cup\Omega_{R})}(\mathbb{C}^{0}e(v_{1}^{\alpha}-u^{\ast\alpha}_{1}),e(v^{\ast\alpha}_{1}))\notag\\
&+\int_{D\setminus(D_{1}\cup D_{1}^{\ast}\cup D_{2}\cup\Omega_{R})}(\mathbb{C}^{0}e(v_{1}^{\alpha}-v^{\ast\alpha}_{1}),e(v_{1}^{\alpha}-v^{\ast\alpha}_{1}))\notag\\
=&\int_{\Omega^{\ast}\setminus\Omega^{\ast}_{R}}(\mathbb{C}^{0}e(v^{\ast\alpha}_{1}),e(v^{\ast\alpha}_{1}))+O(1)\varepsilon^{(1+\gamma)\bar{\theta}}.
\end{align}

As for the second term $\mathrm{II}$, recalling the definition of $\bar{u}_{1}^{\alpha}$, it follows from Corollary \ref{thm86} that
\begin{align}\label{YUA03365}
\mathrm{II}=&\int_{\Omega_{\varepsilon^{\bar{\theta}}}}(\mathbb{C}^{0}e(\bar{u}_{1}^{\alpha}),e(\bar{u}_{1}^{\alpha}))+2\int_{\Omega_{\varepsilon^{\bar{\theta}}}}(\mathbb{C}^{0}e(v_{1}^{\alpha}-\bar{u}_{1}^{\alpha}),e(\bar{u}_{1}^{\alpha}))\notag\\
&+\int_{\Omega_{\varepsilon^{\bar{\theta}}}}(\mathbb{C}^{0}e(v_{1}^{\alpha}-\bar{u}_{1}^{\alpha}),e(v_{1}^{\alpha}-\bar{u}_{1}^{\alpha}))\notag\\
=&\,\mathcal{L}_{d}^{\alpha}\int_{|x'|<\varepsilon^{\bar{\theta}}}\frac{dx'}{\varepsilon+h_{1}(x')-h_{2}(x')}+O(1)
\begin{cases}
|\ln\varepsilon|,&d=2,\\
\varepsilon^{(d-2)\bar{\theta}},&d\geq3,
\end{cases}
\end{align}
where $\mathcal{L}_{d}^{\alpha}$ is defined in (\ref{AZ}).

With regard to the last term $\mathrm{III}$, we further split it into three parts as follows:
\begin{align*}
\mathrm{III}_{1}=&\int_{(\Omega_{R}\setminus\Omega_{\varepsilon^{\bar{\theta}}})\setminus(\Omega^{\ast}_{R}\setminus\Omega^{\ast}_{\varepsilon^{\bar{\theta}}})}(\mathbb{C}^{0}e(v_{1}^{\alpha}),e(v_{1}^{\alpha})),\\
\mathrm{III}_{2}=&\int_{\Omega^{\ast}_{R}\setminus\Omega^{\ast}_{\varepsilon^{\bar{\theta}}}}(\mathbb{C}^{0}e(v_{1}^{\alpha}-v_{1}^{\ast\alpha}),e(v_{1}^{\alpha}-v_{1}^{\ast\alpha}))+2\int_{\Omega^{\ast}_{R}\setminus\Omega^{\ast}_{\varepsilon^{\bar{\theta}}}}(\mathbb{C}^{0}e(v_{1}^{\alpha}-v_{1}^{\ast\alpha}),e(v_{1}^{\ast\alpha})),\\
\mathrm{III}_{3}=&\int_{\Omega^{\ast}_{R}\setminus\Omega^{\ast}_{\varepsilon^{\bar{\theta}}}}(\mathbb{C}^{0}e(v_{1}^{\ast\alpha}),e(v_{1}^{\ast\alpha})).
\end{align*}
Due to the fact that the thickness of $(\Omega_{R}\setminus\Omega_{\varepsilon^{\bar{\theta}}})\setminus(\Omega^{\ast}_{R}\setminus\Omega^{\ast}_{\varepsilon^{\bar{\theta}}})$ is of order $O(\varepsilon)$, it follows from (\ref{Le2.025}) that
\begin{align}\label{YUA0333355}
\mathrm{III}_{1}\leq\,C\varepsilon\int_{\varepsilon^{\bar{\theta}}<|x'|<R}\frac{dx'}{|x'|^{2(1+\gamma)}}\leq C\varepsilon^{1+(d-3-2\gamma)\bar{\theta}}.
\end{align}
For $\alpha=1,2,...,d$, applying Corollary \ref{thm86} for $v_{1}^{\ast\alpha}$ defined in (\ref{qaz001111}), we derive
\begin{align}\label{LGA01}
|\nabla_{x'}v^{\ast\alpha}_{1}|\leq\frac{C}{|x'|},\quad|\partial_{x_{d}}v^{\ast\alpha}_{1}|\leq\frac{C}{|x'|^{1+\gamma}},\quad|\nabla(v_{1}^{\ast\alpha}-\bar{u}_{1}^{\ast\alpha})|\leq\frac{C}{|x'|}.
\end{align}
Combining (\ref{con035}) and \eqref{LGA01}, we obtain
\begin{align}\label{YUA036}
|\mathrm{III}_{2}|\leq C\varepsilon^{(1+\gamma)\bar{\theta}}.
\end{align}
For $\mathrm{III}_{3}$, it follows from (\ref{LGA01}) again that for $d=2$,
\begin{align}\label{QPQ01}
\mathrm{III}_{3}=&\int_{\Omega_{R}^{\ast}\setminus\Omega^{\ast}_{\varepsilon^{\bar{\theta}}}}(\mathbb{C}^{0}e(\bar{u}_{1}^{\ast\alpha}),e(\bar{u}_{1}^{\ast\alpha}))+2\int_{\Omega_{R}^{\ast}\setminus\Omega^{\ast}_{\varepsilon^{\bar{\theta}}}}(\mathbb{C}^{0}e(v_{1}^{\ast\alpha}-\bar{u}_{1}^{\ast\alpha}),e(\bar{u}_{1}^{\ast\alpha}))\notag\\
&+\int_{\Omega_{R}^{\ast}\setminus\Omega^{\ast}_{\varepsilon^{\bar{\theta}}}}(\mathbb{C}^{0}e(v_{1}^{\ast\alpha}-\bar{u}_{1}^{\ast\alpha}),e(v_{1}^{\ast\alpha}-\bar{u}_{1}^{\ast\alpha}))\notag\\
=&\,\mathcal{L}_{2}^{\alpha}\int_{\varepsilon^{\bar{\theta}}<|x_{1}|<R}\frac{dx_{1}}{h_{1}(x_{1})-h_{2}(x_{1})}+O(1)|\ln\varepsilon|;
\end{align}
for $d\geq3$,
\begin{align}\label{YUAQPQ01}
\mathrm{III}_{3}=&\int_{\Omega_{R}^{\ast}\setminus\Omega^{\ast}_{\varepsilon^{\bar{\theta}}}}(\mathbb{C}^{0}e(\bar{u}_{1}^{\ast\alpha}),e(\bar{u}_{1}^{\ast\alpha}))+2\int_{\Omega_{R}^{\ast}\setminus\Omega^{\ast}_{\varepsilon^{\bar{\theta}}}}(\mathbb{C}^{0}e(v_{1}^{\ast\alpha}-\bar{u}_{1}^{\ast\alpha}),e(\bar{u}_{1}^{\ast\alpha}))\notag\\
&+\int_{\Omega_{R}^{\ast}\setminus\Omega^{\ast}_{\varepsilon^{\bar{\theta}}}}(\mathbb{C}^{0}e(v_{1}^{\ast\alpha}-\bar{u}_{1}^{\ast\alpha}),e(v_{1}^{\ast\alpha}-\bar{u}_{1}^{\ast\alpha}))\notag\\
=&\mathcal{L}_{d}^{\alpha}\int_{\varepsilon^{\bar{\theta}}<|x'|<R}\frac{dx'}{h_{1}(x')-h_{2}(x')}-\int_{\Omega^{\ast}\setminus\Omega^{\ast}_{R}}(\mathbb{C}^{0}e(v_{1}^{\ast\alpha}),e(v_{1}^{\ast\alpha}))\notag\\
&+M^{\ast}_{\alpha}+O(1)\varepsilon^{(d-2)\bar{\theta}},
\end{align}
where
\begin{align*}
M_{\alpha}^{\ast}=&\int_{\Omega^{\ast}\setminus\Omega^{\ast}_{R}}(\mathbb{C}^{0}e(v_{1}^{\ast\alpha}),e(v_{1}^{\ast\alpha}))+2\int_{\Omega_{R}^{\ast}}(\mathbb{C}^{0}e(v_{1}^{\ast\alpha}-\bar{u}_{1}^{\ast\alpha}),e(\bar{u}_{1}^{\ast\alpha}))\notag\\
&+\int_{\Omega_{R}^{\ast}}(\mathbb{C}^{0}e(v_{1}^{\ast\alpha}-\bar{u}_{1}^{\ast\alpha}),e(v_{1}^{\ast\alpha}-\bar{u}_{1}^{\ast\alpha}))\\
&+
\begin{cases}
\int_{\Omega_{R}^{\ast}}(\lambda+\mu)(\partial_{x_{\alpha}}\bar{v}^{\ast})^{2}+\mu\sum\limits^{d-1}_{i=1}(\partial_{x_{i}}\bar{v}^{\ast})^{2},&\alpha=1,...,d-1,\\
\int_{\Omega_{R}^{\ast}}\mu\sum\limits^{d-1}_{i=1}(\partial_{x_{i}}\bar{v}^{\ast})^{2},&\alpha=d.
\end{cases}
\end{align*}
Consequently, combining (\ref{YUA123})--(\ref{YUA0333355}) and (\ref{YUA036})--(\ref{YUAQPQ01}), we conclude that
\begin{align}\label{aaaa01}
a_{11}^{\alpha\alpha}=&\mathcal{L}_{d}^{\alpha}\left(\int_{\varepsilon^{\bar{\theta}}<|x'|<R}\frac{dx'}{h_{1}(x')-h_{2}(x')}+\int_{|x'|<\varepsilon^{\bar{\theta}}}\frac{dx'}{\varepsilon+h_{1}(x')-h_{2}(x')}\right)\nonumber\\
&+
\begin{cases}
O(1)|\ln\varepsilon|,&d=2,\\
M^{\ast}_{\alpha}+O(1)\varepsilon^{\bar{\theta}\min\{1+\gamma,d-2\}},&d\geq3.
\end{cases}
\end{align}
On one hand, if $d=2$, then
\begin{align}\label{ZZQ01}
&\int_{|x_{1}|<R}\frac{1}{\varepsilon+h_{1}-h_{2}}+\int_{\varepsilon^{\bar{\theta}}<|x_{1}|<R}\frac{\varepsilon}{(h_{1}-h_{2})(\varepsilon+h_{1}-h_{2})}\notag\\
&=\int_{|x_{1}|<R}\frac{1}{\varepsilon+\tau|x_{1}|^{1+\gamma}}+\int_{|x_{1}|<R}\left(\frac{1}{\varepsilon+h_{1}-h_{2}}-\frac{1}{\varepsilon+\tau|x_{1}|^{1+\gamma}}\right)+O(1)\varepsilon^{\frac{\gamma^{2}+4\gamma+2}{2(1+\gamma)^{2}}}\notag\\
&=2\int_{0}^{R}\frac{1}{\varepsilon+\tau s^{1+\gamma}}+O(1)\int_{0}^{R}\frac{s^{\beta}}{\varepsilon+\tau s^{1+\gamma}}\notag\\
&=\frac{2\Gamma_{\gamma}}{(1+\gamma)\tau^{\frac{1}{1+\gamma}}}\varepsilon^{-\frac{\gamma}{1+\gamma}}
\begin{cases}
1+O(1)\varepsilon^{\frac{\sigma}{1+\gamma}},&\gamma>\sigma,\\
1+O(1)\varepsilon^{\frac{\gamma}{1+\gamma}}|\ln\varepsilon|,&\gamma=\sigma,\\
1+O(1)\varepsilon^{\frac{\gamma}{1+\gamma}},&0<\gamma<\sigma;\\
\end{cases}
\end{align}
On the other hand, if $d\geq3$, then
\begin{align}\label{ZZQ02}
&\int_{|x'|<R}\frac{1}{h_{1}-h_{2}}-\int_{|x'|<\varepsilon^{\bar{\theta}}}\frac{\varepsilon}{(h_{1}-h_{2})(\varepsilon+h_{1}-h_{2})}\notag\\
&=\int_{\Omega^{\ast}}|\partial_{x_{d}}\bar{u}^{\ast\alpha}_{1}|^{2}+O(1)\varepsilon^{(d-2-\gamma)\bar{\theta}}.
\end{align}
Therefore, combining \eqref{aaaa01}--\eqref{ZZQ02}, we complete the proofs of (\ref{zzw001})--(\ref{zzw002}).

{\bf Step 2. Proof of (\ref{zzw003})}. Observe that for $\alpha=d+1,...,\frac{d(d+1)}{2}$, there exist two indices $1\leq i_{\alpha}<j_{\alpha}\leq d$ such that $\psi_{\alpha}=(0,...,0,x_{j_{\alpha}},0,...,0,-x_{i_{\alpha}},0,...,0)$. Pick $\tilde{\theta}=\frac{\gamma}{2(1+2\gamma)(1+\gamma)}$. For $\alpha=d+1,...,\frac{d(d+1)}{2}$, similarly as before, we decompose $a_{11}^{\alpha\alpha}$ as follows:
\begin{align*}
a_{11}^{\alpha\alpha}=&\int_{\Omega\setminus\Omega_{R}}(\mathbb{C}^{0}e(v_{1}^{\alpha}),e(v_{1}^{\alpha}))+\int_{\Omega_{\varepsilon^{\tilde{\theta}}}}(\mathbb{C}^{0}e(v_{1}^{\alpha}),e(v_{1}^{\alpha}))+\int_{\Omega_{R}\setminus\Omega_{\varepsilon^{\tilde{\theta}}}}(\mathbb{C}^{0}e(v_{1}^{\alpha}),e(v_{1}^{\alpha}))\nonumber\\
=&:\mathrm{I}+\mathrm{II}+\mathrm{III}.
\end{align*}

First, utilizing \eqref{LKT6.003}--\eqref{AST123} with a slight modification, it follows that for $\alpha=d+1,...,\frac{d(d+1)}{2}$,
\begin{align}\label{QNWE001}
|v_{1}^{\alpha}-v_{1}^{\ast\alpha}|\leq C\varepsilon^{\frac{1+\gamma}{1+2\gamma}},\quad\;\,\mathrm{in}\;\,D\setminus\big(\overline{D_{1}\cup D_{1}^{\ast}\cup D_{2}\cup\mathcal{C}_{\varepsilon^{\frac{1}{1+2\gamma}}}}\big).
\end{align}
Similarly as above, \eqref{QNWE001}, in combination with the rescale argument, the interpolation inequality and the standard elliptic estimates, reads that for $\alpha=d+1,...,\frac{d(d+1)}{2}$,
\begin{align}\label{QNWE002}
|\nabla(v_{1}^{\alpha}-v_{1}^{\ast\alpha})|\leq C\varepsilon^{(1+\gamma)\tilde{\theta}},\quad\;\,\mathrm{in}\;\,D\setminus\big(\overline{D_{1}\cup D_{1}^{\ast}\cup D_{2}\cup\mathcal{C}_{\varepsilon^{\tilde{\theta}}}}\big).
\end{align}

For the first part $\mathrm{I}$, similar to \eqref{YUA123}, we have
\begin{align}\label{QNWE003}
\mathrm{I}=&\int_{D\setminus(D_{1}\cup D_{1}^{\ast}\cup D_{2}\cup\Omega_{R})}(\mathbb{C}^{0}e(v_{1}^{\alpha}),e(v_{1}^{\alpha}))+O(1)\varepsilon\notag\\
=&\int_{\Omega^{\ast}\setminus\Omega^{\ast}_{R}}(\mathbb{C}^{0}e(v^{\ast\alpha}_{1}),e(v^{\ast\alpha}_{1}))+O(1)\varepsilon^{(1+\gamma)\tilde{\theta}}.
\end{align}

With regard to the second part $\mathrm{II}$, we further decompose it as follows:
\begin{align*}
\mathrm{II}
=&\int_{\Omega_{\varepsilon^{\bar{\theta}}}}[(\mathbb{C}^{0}e(\bar{u}^{\alpha}_{1}),e(\bar{u}^{\alpha}_{1}))+2(\mathbb{C}^{0}e(\bar{u}^{\alpha}_{1}),e(v^{\alpha}_{1}-\bar{u}_{1}^{\alpha}))+(\mathbb{C}^{0}e(v_{1}^{\alpha}-\bar{u}^{\alpha}_{1}),e(v_{1}^{\alpha}-\bar{u}^{\alpha}_{1}))].
\end{align*}
It follows from a direct computation that for $\alpha=d+1,...,\frac{d(d+1)}{2}$,
\begin{align*}
(\mathbb{C}^{0}e(\bar{u}^{\alpha}_{1}),e(\bar{u}^{\alpha}_{1}))=&\mu(x_{i_{\alpha}}^{2}+x_{j_{\alpha}}^{2})\sum^{d}_{k=1}(\partial_{x_{k}}\bar{v})^{2}+(\lambda+\mu)(x_{j_{\alpha}}\partial_{x_{i_{\alpha}}}\bar{v}-x_{i_{\alpha}}\partial_{x_{j_{\alpha}}}\bar{v})^{2}.
\end{align*}
Then in view of Corollary \ref{thm86}, we derive
\begin{align}\label{QNWE005}
\mathrm{II}=&O(1)\varepsilon^{(d-\gamma)\tilde{\theta}}.
\end{align}

As for the last part $\mathrm{III}$, it can be further split as follows:
\begin{align*}
\mathrm{III}_{1}=&\int_{(\Omega_{R}\setminus\Omega_{\varepsilon^{\tilde{\theta}}})\setminus(\Omega^{\ast}_{R}\setminus\Omega^{\ast}_{\varepsilon^{\tilde{\theta}}})}(\mathbb{C}^{0}e(v_{1}^{\alpha}),e(v_{1}^{\alpha}))+\int_{\Omega^{\ast}_{R}\setminus\Omega^{\ast}_{\varepsilon^{\tilde{\theta}}}}(\mathbb{C}^{0}e(v_{1}^{\alpha}-v_{1}^{\ast\alpha}),e(v_{1}^{\alpha}-v_{1}^{\ast\alpha}))\notag\\
&+2\int_{\Omega^{\ast}_{R}\setminus\Omega^{\ast}_{\varepsilon^{\tilde{\theta}}}}(\mathbb{C}^{0}e(v_{1}^{\alpha}-v_{1}^{\ast\alpha}),e(v_{1}^{\ast\alpha})),\\
\mathrm{III}_{2}=&\int_{\Omega^{\ast}_{R}\setminus\Omega^{\ast}_{\varepsilon^{\tilde{\theta}}}}(\mathbb{C}^{0}e(v_{1}^{\ast\alpha}),e(v_{1}^{\ast\alpha})).
\end{align*}
In light of the fact that the thickness of $(\Omega_{R}\setminus\Omega_{\varepsilon^{\tilde{\theta}}})\setminus(\Omega^{\ast}_{R}\setminus\Omega^{\ast}_{\varepsilon^{\tilde{\theta}}})$ is $\varepsilon$, it follows from (\ref{Le2.025}), \eqref{LGA01} and \eqref{QNWE002} that
\begin{align}\label{QNWE006}
\mathrm{III}_{1}=O(1)\varepsilon^{(1+\gamma)\tilde{\theta}}.
\end{align}
With regard to $\mathrm{III}_{2}$, similarly as in \eqref{QNWE005}, we have
\begin{align*}
\int_{\Omega^{\ast}_{\varepsilon^{\tilde{\theta}}}}(\mathbb{C}^{0}e(v_{1}^{\ast\alpha}),e(v_{1}^{\ast\alpha}))=O(1)\varepsilon^{(d-\gamma)\tilde{\theta}}.
\end{align*}
This yields that
\begin{align*}
\mathrm{III}_{2}=&\int_{\Omega^{\ast}_{R}}(\mathbb{C}^{0}e(v_{1}^{\ast\alpha}),e(v_{1}^{\ast\alpha}))-\int_{\Omega^{\ast}_{\varepsilon^{\tilde{\theta}}}}(\mathbb{C}^{0}e(v_{1}^{\ast\alpha}),e(v_{1}^{\ast\alpha}))\notag\\
=&\int_{\Omega^{\ast}_{R}}(\mathbb{C}^{0}e(v_{1}^{\ast\alpha}),e(v_{1}^{\ast\alpha}))+O(1)\varepsilon^{(d-\gamma)\tilde{\theta}}.
\end{align*}
This, in combination with \eqref{QNWE003}--\eqref{QNWE006}, reads that for $\alpha=d+1,...,\frac{d(d+1)}{2}$,
\begin{align*}
a_{11}^{\alpha\alpha}=&a_{11}^{\ast\alpha\alpha}+O(1)\varepsilon^{(1+\gamma)\tilde{\theta}}.
\end{align*}

{\bf Step 3. Proofs of (\ref{LVZQ001})--(\ref{zzw007})}. In light of the symmetry of $a_{11}^{\alpha\beta}$, we only need to consider the case of $\alpha<\beta$ in the following. Pick
\begin{align*}
\hat{\theta}=&
\begin{cases}
\frac{\gamma^{2}}{2(1+2\gamma)(1+\gamma)^{2}},&\alpha=1,2,...,d,\,\beta=1,2,...,\frac{d(d+1)}{2},\,\alpha<\beta,\\
\frac{\gamma}{2(1+2\gamma)(1+\gamma)},&\alpha,\beta=d+1,...,\frac{d(d+1)}{2},\,\alpha<\beta.
\end{cases}
\end{align*}
Similarly as above, for $\alpha,\beta=1,2,...,\frac{d(d+1)}{2}$, $\alpha<\beta$, we split $a_{11}^{\alpha\beta}$ into three terms as follows:
\begin{align*}
a_{11}^{\alpha\beta}=&\int_{\Omega\setminus\Omega_{R}}(\mathbb{C}^{0}e(v_{1}^{\alpha}),e(v_{1}^{\beta}))+\int_{\Omega_{\varepsilon^{\hat{\theta}}}}(\mathbb{C}^{0}e(v_{1}^{\alpha}),e(v_{1}^{\beta}))+\int_{\Omega_{R}\setminus\Omega_{\varepsilon^{\hat{\theta}}}}(\mathbb{C}^{0}e(v_{1}^{\alpha}),e(v_{1}^{\beta}))\nonumber\\
=&:\mathrm{I}+\mathrm{II}+\mathrm{III}.
\end{align*}

Applying the same argument used in \eqref{YUA123} to the first term $\mathrm{I}$, we obtain
\begin{align}\label{KKAA123}
\mathrm{I}=&\int_{D\setminus(D_{1}\cup D_{1}^{\ast}\cup D_{2}\cup\Omega_{R})}(\mathbb{C}^{0}e(v_{1}^{\alpha}),e(v_{1}^{\beta}))+O(1)\varepsilon\notag\\
=&\int_{D\setminus(D_{1}\cup D_{1}^{\ast}\cup D_{2}\cup\Omega_{R})}[(\mathbb{C}^{0}e(v^{\ast\alpha}_{1}),e(v^{\ast\alpha}_{1}))+(\mathbb{C}^{0}e(v_{1}^{\alpha}-v^{\ast\alpha}_{1}),e(v_{1}^{\beta}-v^{\ast\beta}_{1}))]\notag\\
&+\int_{D\setminus(D_{1}\cup D_{1}^{\ast}\cup D_{2}\cup\Omega_{R})}[(\mathbb{C}^{0}e(v^{\ast\alpha}_{1}),e(v^{\beta}_{1}-v^{\ast\beta}_{1}))+(\mathbb{C}^{0}e(v^{\alpha}_{1}-v^{\ast\alpha}_{1}),e(v^{\ast\beta}_{1}))]\notag\\
=&\int_{\Omega^{\ast}\setminus\Omega^{\ast}_{R}}(\mathbb{C}^{0}e(v^{\ast\alpha}_{1}),e(v^{\ast\beta}_{1}))+O(1)\varepsilon^{(1+\gamma)\hat{\theta}}.
\end{align}

With regard to the second term $\mathrm{II}$, we further decompose it as follows:
\begin{align}\label{NAT001}
\mathrm{II}=&\int_{\Omega_{\varepsilon^{\hat{\theta}}}}(\mathbb{C}^{0}e(v_{1}^{\alpha}),e(v_{1}^{\beta}))\notag\\
=&\int_{\Omega_{\varepsilon^{\hat{\theta}}}}(\mathbb{C}^{0}e(\bar{u}_{1}^{\alpha}),e(\bar{u}^{\beta}_{1}))+\int_{\Omega_{\varepsilon^{\hat{\theta}}}}(\mathbb{C}^{0}e(v_{1}^{\alpha}-\bar{u}_{1}^{\alpha}),e(v_{1}^{\beta}-\bar{u}^{\beta}_{1}))\notag\\
&+\int_{\Omega_{\varepsilon^{\hat{\theta}}}}(\mathbb{C}^{0}e(\bar{u}^{\alpha}_{1}),e(v^{\beta}_{1}-\bar{u}^{\beta}_{1}))+\int_{\Omega_{\varepsilon^{\hat{\theta}}}}(\mathbb{C}^{0}e(v_{1}^{\alpha}-\bar{u}^{\alpha}_{1}),e(\bar{u}^{\beta}_{1})).
\end{align}
By a direct calculation, we have

$(i)$ for $\alpha,\beta=1,2,...,d,$ $\alpha<\beta$, then
\begin{align}\label{ZH0000}
(\mathbb{C}^{0}e(\bar{u}_{1}^{\alpha}),e(\bar{u}^{\beta}_{1}))=(\lambda+\mu)\partial_{x_{\alpha}}\bar{v}\partial_{x_{\beta}}\bar{v};
\end{align}

$(ii)$ for $\alpha=1,2,...,d$, $\beta=d+1,...,\frac{d(d+1)}{2}$, there exist two indices $1\leq i_{\beta}<j_{\beta}\leq d$ such that
$\bar{u}_{1}^{\beta}=\psi_{\beta}\bar{v}=(0,...,0,x_{j_{\beta}}\bar{v},0,...,0,-x_{i_{\beta}}\bar{v},0,...,0)$. If $i_{\beta}\neq\alpha,\,j_{\beta}\neq\alpha$, then
\begin{align}\label{ZH000}
(\mathbb{C}^{0}e(\bar{u}_{1}^{\alpha}),e(\bar{u}^{\beta}_{1}))=\lambda\partial_{x_{\alpha}}\bar{v}(x_{j_{\beta}}\partial_{i_{\beta}}\bar{v}-x_{i_{\beta}}\partial_{x_{j_{\beta}}}\bar{v}),
\end{align}
and if $i_{\beta}=\alpha,\,j_{\beta}\neq\alpha$, then
\begin{align}\label{ZH001}
(\mathbb{C}^{0}e(\bar{u}_{1}^{\alpha}),e(\bar{u}^{\beta}_{1}))=\mu x_{j_{\beta}}\sum^{d}_{k=1}(\partial_{x_{k}}\bar{v})^{2}+(\lambda+\mu)\partial_{x_{\alpha}}\bar{v}(x_{j_{\beta}}\partial_{i_{\beta}}\bar{v}-x_{i_{\beta}}\partial_{x_{j_{\beta}}}\bar{v}),
\end{align}
and if $i_{\beta}\neq\alpha,\,j_{\beta}=\alpha$, then
\begin{align}\label{ZH002}
(\mathbb{C}^{0}e(\bar{u}_{1}^{\alpha}),e(\bar{u}^{\beta}_{1}))=&-\mu x_{i_{\beta}}\sum^{d}_{k=1}(\partial_{x_{k}}\bar{v})^{2}+(\lambda+\mu)\partial_{x_{\alpha}}\bar{v}(x_{j_{\beta}}\partial_{i_{\beta}}\bar{v}-x_{i_{\beta}}\partial_{x_{j_{\beta}}}\bar{v});
\end{align}

$(iii)$ for $\alpha,\beta=d+1,...,\frac{d(d+1)}{2}$, $\alpha<\beta$, there exist four indices $1\leq i_{\alpha}<j_{\alpha}\leq d$ and $1\leq i_{\beta}<j_{\beta}\leq d$ such that $\bar{u}_{1}^{\alpha}=\psi_{\alpha}\bar{v}=(0,...,0,x_{j_{\alpha}}\bar{v},0,...,0,-x_{i_{\alpha}}\bar{v},0,...,0)$
and
$\bar{u}_{1}^{\beta}=\psi_{\beta}\bar{v}=(0,...,0,x_{j_{\beta}}\bar{v},0,...,0,-x_{i_{\beta}}\bar{v},0,...,0)$. Since $\alpha<\beta$, we also have $j_{\beta}\leq j_{\alpha}$. If $i_{\alpha}\neq i_{\beta},\,j_{\alpha}\neq j_{\beta},\,i_{\alpha}\neq j_{\beta}$, then
\begin{align}\label{ZH003}
(\mathbb{C}^{0}e(\bar{u}_{1}^{\alpha}),e(\bar{u}^{\beta}_{1}))=\lambda(x_{j_{\alpha}}\partial_{x_{i_{\alpha}}}\bar{v}-x_{i_{\alpha}}\partial_{x_{j_{\alpha}}}\bar{v})(x_{j_{\beta}}\partial_{x_{i_{\beta}}}\bar{v}-x_{i_{\beta}}\partial_{x_{j_{\beta}}}\bar{v}),
\end{align}
and if $i_{\alpha}=i_{\beta},\,j_{\alpha}\neq j_{\beta}$, then
\begin{align}\label{ZH004}
(\mathbb{C}^{0}e(\bar{u}_{1}^{\alpha}),e(\bar{u}^{\beta}_{1}))=&\mu x_{j_{\alpha}}x_{j_{\beta}}\sum^{d}_{k=1}(\partial_{x_{k}}\bar{v})^{2}+\mu x_{j_{\alpha}}\partial_{x_{j_{\beta}}}\bar{v}(x_{j_{\beta}}\partial_{x_{j_{\beta}}}\bar{v}-x_{i_{\alpha}}\partial_{x_{i_{\alpha}}}\bar{v})\notag\\
&+(\lambda+\mu)(x_{j_{\alpha}}\partial_{x_{i_{\alpha}}}\bar{v}-x_{i_{\alpha}}\partial_{x_{j_{\alpha}}}\bar{v})(x_{j_{\beta}}\partial_{x_{i_{\beta}}}\bar{v}-x_{i_{\beta}}\partial_{x_{j_{\beta}}}\bar{v}),
\end{align}
and if $i_{\alpha}\neq i_{\beta},\,j_{\alpha}=j_{\beta}$, then
\begin{align}\label{ZH005}
(\mathbb{C}^{0}e(\bar{u}_{1}^{\alpha}),e(\bar{u}^{\beta}_{1}))=&\mu x_{i_{\alpha}}x_{i_{\beta}}\sum^{d}_{k=1}(\partial_{x_{k}}\bar{v})^{2}+\mu x_{i_{\alpha}}\partial_{x_{i_{\beta}}}\bar{v}(x_{i_{\beta}}\partial_{x_{i_{\beta}}}\bar{v}-x_{j_{\alpha}}\partial_{x_{j_{\alpha}}}\bar{v})\notag\\
&+(\lambda+\mu)(x_{j_{\alpha}}\partial_{x_{i_{\alpha}}}\bar{v}-x_{i_{\alpha}}\partial_{x_{j_{\alpha}}}\bar{v})(x_{j_{\alpha}}\partial_{x_{i_{\beta}}}\bar{v}-x_{i_{\beta}}\partial_{x_{j_{\alpha}}}\bar{v}),
\end{align}
and if $i_{\beta}<j_{\beta}=i_{\alpha}<j_{\alpha}$, then
\begin{align}\label{ZH006}
(\mathbb{C}^{0}e(\bar{u}_{1}^{\alpha}),e(\bar{u}^{\beta}_{1}))=&-\mu x_{i_{\beta}}x_{j_{\alpha}}\sum^{d}_{k=1}(\partial_{x_{k}}\bar{v})^{2}+\mu x_{j_{\alpha}}\partial_{x_{i_{\beta}}}\bar{v}(x_{i_{\alpha}}\partial_{x_{i_{\alpha}}}\bar{v}-x_{i_{\beta}}\partial_{x_{i_{\beta}}}\bar{v})\notag\\
&+(\lambda+\mu)(x_{j_{\alpha}}\partial_{x_{i_{\alpha}}}\bar{v}-x_{i_{\alpha}}\partial_{x_{j_{\alpha}}}\bar{v})(x_{i_{\alpha}}\partial_{x_{i_{\beta}}}\bar{v}-x_{i_{\beta}}\partial_{x_{i_{\alpha}}}\bar{v}).
\end{align}

Consequently, in light of the fact that
\begin{align*}
\left|\int^{\varepsilon+h_{1}(x')}_{h_{2}(x')}x_{d}\,dx_{d}\right|\leq|\varepsilon+h_{1}(x')|\delta(x')\leq C(\varepsilon+|x'|^{1+\alpha})^{2},\quad \mathrm{in}\; B'_{R},
\end{align*}
it follows from \eqref{NAT001}--\eqref{ZH006}, Corollary \ref{thm86}, the symmetry of integral region and the parity of integrand that
\begin{align}\label{MAH01}
\mathrm{II}=&O(1)
\begin{cases}
|\ln\varepsilon|,&d=2,\,\alpha=1,\beta=2,\\
\varepsilon^{(d-2)\hat{\theta}},&d\geq3,\,\alpha,\beta=1,2,...,d,\,\alpha<\beta,\\
\varepsilon^{(d-1)\hat{\theta}},&d\geq2,\,\alpha=1,2,...,d,\,\beta=d+1,...,\frac{d(d+1)}{2},\\
\varepsilon^{d\hat{\theta}},&d\geq3,\,\alpha,\beta=d+1,...,\frac{d(d+1)}{2},\,\alpha<\beta.
\end{cases}
\end{align}

As for $\mathrm{III}$, it can be further split as follows:
\begin{align*}
\mathrm{III}_{1}=&\int_{(\Omega_{R}\setminus\Omega_{\varepsilon^{\bar{\theta}}})\setminus(\Omega^{\ast}_{R}\setminus\Omega^{\ast}_{\varepsilon^{\bar{\theta}}})}(\mathbb{C}^{0}e(v_{1}^{\alpha}),e(v_{1}^{\beta}))+\int_{\Omega^{\ast}_{R}\setminus\Omega^{\ast}_{\varepsilon^{\bar{\theta}}}}(\mathbb{C}^{0}e(v_{1}^{\alpha}-v_{1}^{\ast\alpha}),e(v_{1}^{\beta}-v_{1}^{\ast\beta}))\notag\\
&+\int_{\Omega^{\ast}_{R}\setminus\Omega^{\ast}_{\varepsilon^{\bar{\theta}}}}(\mathbb{C}^{0}e(v_{1}^{\alpha}-v_{1}^{\ast\alpha}),e(v_{1}^{\ast\beta}))+\int_{\Omega^{\ast}_{R}\setminus\Omega^{\ast}_{\varepsilon^{\bar{\theta}}}}(\mathbb{C}^{0}e(v_{1}^{\ast\alpha}),e(v_{1}^{\beta}-v_{1}^{\ast\beta})),\\
\mathrm{III}_{2}=&\int_{\Omega^{\ast}_{R}\setminus\Omega^{\ast}_{\varepsilon^{\bar{\theta}}}}(\mathbb{C}^{0}e(v_{1}^{\ast\alpha}),e(v_{1}^{\ast\beta})).
\end{align*}
Since the thickness of $(\Omega_{R}\setminus\Omega_{\varepsilon^{\bar{\theta}}})\setminus(\Omega^{\ast}_{R}\setminus\Omega^{\ast}_{\varepsilon^{\bar{\theta}}})$ is $\varepsilon$, we deduce from (\ref{Le2.025}), (\ref{con035}), \eqref{LGA01} and \eqref{QNWE002} that
\begin{align}\label{KAM001}
\mathrm{III}_{1}=O(1)\varepsilon^{(1+\gamma)\hat{\theta}}.
\end{align}

With regard to $\mathrm{III}_{2}$, on one hand, for $d=2$, $\alpha=1,\beta=2$, we have
\begin{align*}
\mathrm{III}_{2}=&\int_{\Omega_{R}^{\ast}\setminus\Omega^{\ast}_{\varepsilon^{\hat{\theta}}}}(\mathbb{C}^{0}e(\bar{u}_{1}^{\ast1}),e(\bar{u}_{1}^{\ast2}))+\int_{\Omega_{R}^{\ast}\setminus\Omega^{\ast}_{\varepsilon^{\hat{\theta}}}}(\mathbb{C}^{0}e(v_{1}^{\ast1}-\bar{u}_{1}^{\ast1}),e(v_{1}^{\ast2}-\bar{u}_{1}^{\ast2}))\notag\\
&+\int_{\Omega_{R}^{\ast}\setminus\Omega^{\ast}_{\varepsilon^{\hat{\theta}}}}(\mathbb{C}^{0}e(v_{1}^{\ast1}-\bar{u}_{1}^{\ast1}),e(\bar{u}_{1}^{\ast2}))+\int_{\Omega_{R}^{\ast}\setminus\Omega^{\ast}_{\varepsilon^{\hat{\theta}}}}(\mathbb{C}^{0}e(\bar{u}_{1}^{\ast1}),e(v_{1}^{\ast2}-\bar{u}_{1}^{\ast2})),
\end{align*}
which, in combination with $(\mathbb{C}^{0}e(\bar{u}_{1}^{\ast1}),e(\bar{u}_{1}^{\ast2}))=(\lambda+\mu)\partial_{x_{1}}\bar{v}^{\ast}\partial_{x_{2}}\bar{v}^{\ast}$, reads that
\begin{align}\label{PLAM001}
\mathrm{III}_{2}=O(1)|\ln\varepsilon|.
\end{align}

On the other hand, for $d\geq3,\,\alpha,\beta=1,2,...,d,\,\alpha<\beta$, for $d\geq2,\,\alpha=1,2,...,d,\,\beta=d+1,...,\frac{d(d+1)}{2},\alpha<\beta$, or for $d\geq3,\,\alpha,\beta=d+1,...,\frac{d(d+1)}{2},\,\alpha<\beta$, similarly as in \eqref{NAT001}, applying \eqref{ZH0000}--\eqref{ZH006} with $\bar{v}$ replaced by $\bar{v}^{\ast}$ for $\int_{\Omega^{\ast}_{\varepsilon^{\hat{\theta}}}}(\mathbb{C}^{0}e(v_{1}^{\ast\alpha}),e(v_{1}^{\ast\alpha}))$, we deduce
\begin{align}\label{KAM002}
&\mathrm{III}_{2}-\int_{\Omega^{\ast}_{R}}(\mathbb{C}^{0}e(v_{1}^{\ast\alpha}),e(v_{1}^{\ast\beta}))\notag\\
=&-\int_{\Omega^{\ast}_{\varepsilon^{\hat{\theta}}}}(\mathbb{C}^{0}e(v_{1}^{\ast\alpha}),e(v_{1}^{\ast\beta}))\notag\\
=&O(1)
\begin{cases}
\varepsilon^{(d-2)\hat{\theta}},&d\geq3,\,\alpha,\beta=1,2,...,d,\,\alpha<\beta,\\
\varepsilon^{(d-1)\hat{\theta}},&d\geq2,\,\alpha=1,2,...,d,\,\beta=d+1,...,\frac{d(d+1)}{2},\\
\varepsilon^{d\hat{\theta}},&d\geq3,\,\alpha,\beta=d+1,...,\frac{d(d+1)}{2},\,\alpha<\beta.
\end{cases}
\end{align}
Therefore, combining \eqref{KAM001}--\eqref{KAM002}, we derive that
\begin{align*}
\mathrm{III}=\int_{\Omega_{R}\setminus\Omega_{\varepsilon^{\hat{\theta}}}}(\mathbb{C}^{0}e(v_{1}^{\alpha}),e(v_{1}^{\beta}))=O(1)|\ln\varepsilon|,\quad d=2,\,\alpha=1,\beta=2,
\end{align*}
and
\begin{align*}
&\mathrm{III}-\int_{\Omega^{\ast}_{R}}(\mathbb{C}^{0}e(v_{1}^{\ast\alpha}),e(v_{1}^{\ast\beta}))\notag\\
=&O(1)
\begin{cases}
\varepsilon^{\hat{\theta}\min\{1+\gamma,d-2\}},&d\geq3,\,\alpha,\beta=1,2,...,d,\,\alpha<\beta,\\
\varepsilon^{\hat{\theta}\min\{1+\gamma,d-1\}},&d\geq2,\,\alpha=1,2,...,d,\,\beta=d+1,...,\frac{d(d+1)}{2},\\
\varepsilon^{\hat{\theta}d},&d\geq3,\,\alpha,\beta=d+1,...,\frac{d(d+1)}{2},\,\alpha<\beta.
\end{cases}
\end{align*}
This, together with \eqref{KKAA123} and \eqref{MAH01}, gives that
\begin{align*}
a_{12}=O(1)|\ln\varepsilon|,\quad d=2,
\end{align*}
and
\begin{align*}
a_{11}^{\alpha\beta}=a_{11}^{\ast\alpha\beta}+&O(1)
\begin{cases}
\varepsilon^{\hat{\theta}\min\{1+\gamma,d-2\}},&d\geq3,\,\alpha,\beta=1,2,...,d,\,\alpha<\beta,\\
\varepsilon^{\hat{\theta}\min\{1+\gamma,d-1\}},&d\geq2,\,\alpha=1,2,...,d,\,\beta=d+1,...,\frac{d(d+1)}{2},\\
\varepsilon^{\hat{\theta}(1+\gamma)},&d\geq3,\,\alpha,\beta=d+1,...,\frac{d(d+1)}{2},\,\alpha<\beta.
\end{cases}
\end{align*}

{\bf Step 4. Proofs of (\ref{AZQ001})--(\ref{AZQ00111})}. Note that for every $\alpha=1,2,...,\frac{d(d+1)}{2}$, $v_{1}^{\alpha}+v_{2}^{\alpha}-v_{1}^{\ast\alpha}-v_{2}^{\ast\alpha}$ solves
\begin{align*}
\begin{cases}
\mathcal{L}_{\lambda,\mu}(v_{1}^{\alpha}+v_{2}^{\alpha}-v_{1}^{\ast\alpha}-v_{2}^{\ast\alpha})=0,&\mathrm{in}\;\,D\setminus(\overline{D_{1}\cup D_{1}^{\ast}\cup D_{2}}),\\
v_{1}^{\alpha}+v_{2}^{\alpha}-v_{1}^{\ast\alpha}-v_{2}^{\ast\alpha}=\psi_{\alpha}-v_{1}^{\ast\beta}-v_{2}^{\ast\alpha},&\mathrm{on}\;\,\partial D_{1}\setminus D_{1}^{\ast},\\
v_{1}^{\alpha}+v_{2}^{\alpha}-v_{1}^{\ast\alpha}-v_{2}^{\ast\alpha}=v_{1}^{\alpha}+v_{2}^{\alpha}-\psi_{\alpha},&\mathrm{on}\;\,\partial D_{1}^{\ast}\setminus(D_{1}\cup\{0\}),\\
v_{1}^{\alpha}+v_{2}^{\alpha}-v_{1}^{\ast\alpha}-v_{2}^{\ast\alpha}=0,&\mathrm{on}\;\,\partial D_{2}\cup\partial D.
\end{cases}
\end{align*}
Similarly as above, it follows from the standard boundary and interior estimates of elliptic systems that for $x\in\partial D_{1}\setminus D_{1}^{\ast}$,
\begin{align}\label{LAQ007}
&|v_{1}^{\alpha}+v_{2}^{\alpha}-v_{1}^{\ast\alpha}-v_{2}^{\ast\alpha}|\notag\\
=&|(v_{1}^{\ast\alpha}+v_{2}^{\ast\alpha})(x',x_{d}-\varepsilon)-(v_{1}^{\ast\alpha}+v_{2}^{\ast\alpha})(x',x_{d})|\leq C\varepsilon,
\end{align}
while, in light of Corollary \ref{coro00z}, we obtain that for $x\in\partial D_{1}^{\ast}\setminus(D_{1}\cup\mathcal{C}_{\varepsilon^{\frac{1}{1+\gamma}}})$,
\begin{align}\label{LAQ008}
&|(v_{1}^{\alpha}+v_{2}^{\alpha}-v_{1}^{\ast\alpha}-v_{2}^{\ast\alpha})(x',x_{d})|\notag\\
=&|(v_{1}^{\alpha}+v_{2}^{\alpha})(x',x_{d})-(v_{1}^{\alpha}+v_{2}^{\alpha})(x',x_{d}+\varepsilon)|\leq C\varepsilon.
\end{align}
Based on the fact that $v_{1}^{\alpha}+v_{2}^{\alpha}-v_{1}^{\ast\alpha}-v_{2}^{\ast\alpha}=0$ on $\partial D_{2}$, it follows from Corollary \ref{coro00z} again that for $x\in\Omega_{R}^{\ast}\cap\{|x'|=\varepsilon^{\frac{1}{1+\gamma}}\}$,
\begin{align}
&|(v_{1}^{\alpha}+v_{2}^{\alpha}-v_{1}^{\ast\alpha}-v_{2}^{\ast\alpha})(x',x_{d})|\notag\\
=&|(v_{1}^{\alpha}+v_{2}^{\alpha}-v_{1}^{\ast\alpha}-v_{2}^{\ast\alpha})(x',x_{d})-(v_{1}^{\alpha}+v_{2}^{\alpha}-v_{1}^{\ast\alpha}-v_{2}^{\ast\alpha})(x',h_{2}(x'))|\notag\\ \leq& C\delta^{\frac{\gamma}{1+\gamma}}\varepsilon\leq C\varepsilon^{\frac{1+2\gamma}{1+\gamma}},\label{LAQ009}
\end{align}
where in the last line of \eqref{LAQ009} we utilized the fact that the exponential function decays faster than the power function. Consequently, it follows from \eqref{LAQ007}--\eqref{LAQ009} that
\begin{align}\label{MIH01}
|v_{1}^{\alpha}+v_{2}^{\alpha}-v_{1}^{\ast\alpha}-v_{2}^{\ast\alpha}|\leq C\varepsilon,\quad\;\,\mathrm{on}\;\,\partial \big(D\setminus\big(\overline{D_{1}\cup D_{1}^{\ast}\cup D_{2}\cup\mathcal{C}_{\varepsilon^{\frac{1}{1+\gamma}}}}\big)\big).
\end{align}
Similar to \eqref{con035}, utilizing \eqref{MIH01}, the maximum principle, the rescale argument, the interpolation inequality and the standard elliptic estimates, we obtain
\begin{align}\label{ZQWZW001}
|\nabla(v_{1}^{\alpha}+v_{2}^{\alpha}-v_{1}^{\ast\alpha}-v_{2}^{\ast\alpha})|\leq C\varepsilon^{\frac{\gamma}{2(1+\gamma)}},\;\;\mathrm{in}\;\,D\setminus\big(\overline{D_{1}\cup D_{1}^{\ast}\cup D_{2}\cup\mathcal{C}_{\varepsilon^{\frac{1}{2(1+\gamma)^{2}}}}}\big).
\end{align}

Let $\tilde{\theta}=\frac{1}{2(1+\gamma)^{2}}$. We first decompose $\sum\limits^{2}_{i=1}a_{i1}^{\alpha\beta}$ into three parts as follows:
\begin{align*}
\sum\limits^{2}_{i=1}a_{i1}^{\alpha\beta}=&\int_{\Omega\setminus\Omega_{R}}(\mathbb{C}^{0}e(v_{1}^{\alpha}+v_{2}^{\alpha}),e(v_{1}^{\beta}))+\int_{\Omega_{\varepsilon^{\tilde{\theta}}}}(\mathbb{C}^{0}e(v_{1}^{\alpha}+v_{2}^{\alpha}),e(v_{1}^{\beta}))\\
&+\int_{\Omega_{R}\setminus\Omega_{\varepsilon^{\tilde{\theta}}}}(\mathbb{C}^{0}e(v_{1}^{\alpha}+v_{2}^{\alpha}),e(v_{1}^{\beta}))\\
=&:\mathrm{I}+\mathrm{II}+\mathrm{III}.
\end{align*}
With regard to the first part $\mathrm{I}$, by the same argument as in \eqref{KKAA123}, we deduce from \eqref{ZQWZW001} that
\begin{align}\label{GAZ0011}
\mathrm{I}=\int_{\Omega^{\ast}\setminus\Omega^{\ast}_{R}}(\mathbb{C}^{0}e(v_{1}^{\ast\alpha}+v_{2}^{\ast\alpha}),e(v_{1}^{\ast\beta}))+O(1)\varepsilon^{\frac{\gamma}{2(1+\gamma)}}.
\end{align}
As for the second part $\mathrm{II}$, utilizing Corollaries \ref{thm86} and \ref{coro00z}, we deduce
\begin{align}\label{GAZ001}
|\mathrm{II}|\leq \int_{|x'|\leq \varepsilon^{\tilde{\theta}}}C(\varepsilon+|x'|^{1+\gamma})^{\frac{\gamma}{1+\gamma}}\leq C\varepsilon^{\frac{\gamma(d+\gamma-1)}{2(1+\gamma)^{2}}}.
\end{align}
For the third part $\mathrm{III}$, it can be further split as follows:
\begin{align*}
\mathrm{III}_{1}=&\int_{\Omega^{\ast}_{R}\setminus\Omega^{\ast}_{\varepsilon^{\tilde{\theta}}}}\sum^{2}_{i=1}\Big[(\mathbb{C}^{0}e(v_{i}^{\alpha}-v_{i}^{\ast\alpha}),e(v_{1}^{\ast\beta}))+(\mathbb{C}^{0}e(v_{i}^{\ast\alpha}),e(v_{1}^{\beta}-v_{1}^{\ast\beta}))\notag\\
&\quad\quad\quad\quad\quad\quad+(\mathbb{C}^{0}e(v_{i}^{\alpha}-v_{i}^{\ast\alpha}),e(v_{1}^{\beta}-v_{1}^{\ast\beta}))\Big],\\
\mathrm{III}_{2}=&\int_{(\Omega_{R}\setminus\Omega_{\varepsilon^{\tilde{\theta}}})\setminus(\Omega^{\ast}_{R}\setminus\Omega^{\ast}_{\varepsilon^{\tilde{\theta}}})}(\mathbb{C}^{0}e(v_{1}^{\alpha}+v_{2}^{\alpha}),e(v_{1}^{\beta})),\\
\mathrm{III}_{3}=&\int_{\Omega^{\ast}_{R}\setminus\Omega^{\ast}_{\varepsilon^{\tilde{\theta}}}}(\mathbb{C}^{0}e(v_{1}^{\ast\alpha}+v_{2}^{\ast\alpha}),e(v_{1}^{\ast\beta})).
\end{align*}
First, it follows from \eqref{ZQWZW001} that
\begin{align}\label{GAP001}
|\mathrm{III}_{1}|\leq C\varepsilon^{\frac{\gamma}{2(1+\gamma)}}.
\end{align}
Second, making use of Corollaries \ref{thm86}-\ref{coro00z}, we deduce
\begin{align}\label{GAZ00195}
|\mathrm{III}_{2}|\leq \int_{\varepsilon^{\tilde{\theta}}\leq |x'|\leq R}\frac{C\varepsilon(\varepsilon+|x'|^{1+\gamma})^{\frac{\gamma}{1+\gamma}}}{|x'|^{1+\gamma}}\leq C
\begin{cases}
\varepsilon|\ln\varepsilon|,&d=2,\\
\varepsilon,&d\geq3.
\end{cases}
\end{align}
As for $\mathrm{III}_{3}$, in light of \eqref{LKT6.005}, it follows from Corollaries \ref{thm86} and \ref{coro00z} again that
\begin{align*}
\mathrm{III}_{3}=&\int_{\Omega^{\ast}_{R}}(\mathbb{C}^{0}e(v_{1}^{\ast\alpha}+v_{2}^{\ast\alpha}),e(v_{1}^{\ast\beta}))-\int_{\Omega^{\ast}_{\varepsilon^{\tilde{\theta}}}}(\mathbb{C}^{0}e(v_{1}^{\ast\alpha}+v_{2}^{\ast\alpha}),e(v_{1}^{\ast\beta}))\\
=&\int_{\Omega^{\ast}_{R}}(\mathbb{C}^{0}e(v_{1}^{\ast\alpha}+v_{2}^{\ast\alpha}),e(v_{1}^{\ast\beta}))+O(1)\varepsilon^{\frac{\gamma(d+\gamma-1)}{2(1+\gamma)^{2}}},
\end{align*}
which, together with \eqref{GAP001}--\eqref{GAZ00195}, gives that
\begin{align}\label{FLW001}
\mathrm{III}_{4}=&\int_{\Omega^{\ast}_{R}}(\mathbb{C}^{0}e(v_{1}^{\ast\alpha}+v_{2}^{\ast\alpha}),e(v_{1}^{\ast\beta}))+O(1)\varepsilon^{\frac{\gamma}{2(1+\gamma)}}.
\end{align}
Hence, combining \eqref{GAZ0011}--\eqref{GAZ001} and \eqref{FLW001}, we obtain
\begin{align*}
\sum\limits^{2}_{i=1}a_{i1}^{\alpha\beta}=\sum\limits^{2}_{i=1}a_{i1}^{\ast\alpha\beta}+O(1)\varepsilon^{\frac{\gamma}{2(1+\gamma)}}.
\end{align*}
Similarly, we have
\begin{align*}
\sum\limits^{2}_{j=1}a_{1j}^{\alpha\beta}=\sum\limits^{2}_{j=1}a_{1j}^{*\alpha\beta}+O(\varepsilon^{\frac{\gamma}{2(1+\gamma)}}),\quad \sum\limits^{2}_{i,j=1}a_{ij}^{\alpha\beta}=\sum\limits^{2}_{i,j=1}a_{ij}^{*\alpha\beta}+O(\varepsilon^{\frac{\gamma}{2(1+\gamma)}}).
\end{align*}

Therefore, we prove that \eqref{AZQ001}--\eqref{AZQ00111} hold.

\end{proof}
Before giving the proof of Theorem \ref{OMG123}, we first list a result on the linear space of rigid displacement $\Psi$ with its proof seen in Lemma 6.1 of \cite{BLL2017}.
\begin{lemma}\label{GLW}
Let $\xi$ be an element of $\Psi$, defined by \eqref{LAK01} with $d\geq2$. If $\xi$ vanishes at $d$ distinct points $\bar{x}_{1}$, $i=1,2,...,d$, which do not lie on a $(d-1)$-dimensional plane, then $\xi=0$.
\end{lemma}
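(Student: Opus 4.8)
The plan is to reduce Lemma~\ref{GLW} to an elementary fact about skew-symmetric matrices, using the explicit description of $\Psi$. Recall that, in view of the basis \eqref{OPP}, every $\xi\in\Psi$ can be written as $\xi(x)=Ax+b$, where $b\in\mathbb{R}^{d}$ is constant (the span of the translations $e_{i}$) and $A\in\mathbb{R}^{d\times d}$ is the constant matrix built from the infinitesimal rotations $x_{k}e_{j}-x_{j}e_{k}$; the defining relation $e(\xi)=\frac{1}{2}(\nabla\xi+(\nabla\xi)^{T})=0$ forces $\nabla\xi$ to be constant and skew-symmetric, so $A^{T}=-A$. If one prefers, this is just the classical structure theorem for rigid displacements, proved by differentiating $\partial_{i}\xi^{j}+\partial_{j}\xi^{i}=0$ and combining cyclic permutations to conclude that all second derivatives of $\xi$ vanish.

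Next I would feed in the hypothesis. From $\xi(\bar{x}_{i})=0$, $i=1,\dots,d$, we get $A\bar{x}_{i}+b=0$ for each $i$; subtracting the equation for $i=1$ yields $A(\bar{x}_{i}-\bar{x}_{1})=0$ for $i=2,\dots,d$. The geometric nondegeneracy assumption on $\bar{x}_{1},\dots,\bar{x}_{d}$ ensures that the $d-1$ difference vectors $\bar{x}_{i}-\bar{x}_{1}$, $i=2,\dots,d$, are linearly independent; hence they span a hyperplane $W\subset\mathbb{R}^{d}$, and $A$ vanishes identically on $W$.

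The core of the proof is the claim that a skew-symmetric matrix $A$ annihilating a hyperplane $W$ is necessarily zero. I would argue as follows: let $n$ be a nonzero normal vector to $W$, so that $\mathbb{R}^{d}=W\oplus\mathrm{span}\{n\}$. For every $w\in W$ and $y\in\mathbb{R}^{d}$, skew-symmetry gives $0=(Aw)\cdot y=-w\cdot(Ay)$, so $Ay\perp W$, i.e.\ $Ay\in\mathrm{span}\{n\}$ for all $y$. In particular $An=cn$ for some scalar $c$; but $(An)\cdot n=n^{T}An=0$ because $A$ is skew, so $c=0$ and $An=0$. Since $A$ kills both $W$ and $n$, it kills all of $\mathbb{R}^{d}$, i.e.\ $A=0$. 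Finally $b=-A\bar{x}_{1}=0$, and therefore $\xi\equiv0$.

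The step I expect to be the only substantive one is the last: a generic matrix vanishing on a hyperplane need not be zero, and it is precisely the skew-symmetry---which makes $W^{\perp}$ invariant under $A$ and forces the quadratic form $v\mapsto(Av)\cdot v$ to vanish---that promotes ``vanishes on a codimension-one subspace'' to ``vanishes identically''. The remaining ingredients (the affine form of elements of $\Psi$, and the passage from the $d$ point conditions to a hyperplane of zeros of $A$) are routine linear algebra.
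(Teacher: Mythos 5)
Your argument is correct, and it follows the standard route that the paper itself delegates to Lemma~6.1 of \cite{BLL2017}: represent $\xi\in\Psi$ as $\xi(x)=Ax+b$ with $A$ skew-symmetric, reduce the $d$ vanishing conditions to $A$ annihilating a hyperplane, and then use skew-symmetry (via the vanishing quadratic form $n^{T}An=0$ and the range containment $A\mathbb{R}^{d}\subset W^{\perp}$) to force $A=0$ and hence $b=0$. An equivalent way to phrase the key step, which some authors prefer, is that a skew-symmetric matrix has even rank, so $\mathrm{rank}\,A\leq 1$ already forces $A=0$; your direct computation is just as clean.

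One small point worth flagging: the lemma as printed says the $d$ points ``do not lie on a $(d-1)$-dimensional plane,'' which is impossible for $d$ points (they always span an affine subspace of dimension at most $d-1$), so read literally the hypothesis is vacuous. The intended hypothesis, which you silently and correctly supplied, is that they do not lie in a $(d-2)$-dimensional plane, i.e.\ the difference vectors $\bar{x}_{i}-\bar{x}_{1}$, $i=2,\dots,d$, are linearly independent; this is also the form in which the lemma appears in \cite{BLL2017}.
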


\begin{proof}[Proof of Theorem \ref{OMG123}]
We now divide into two parts to complete the proof of Theorem \ref{OMG123}.

{\bf Step 1.} If $d=2$, we define
\begin{gather*}
\mathbb{F}_{0}:=\begin{pmatrix}a_{11}^{33}&\sum\limits_{i=1}^{2}a_{i1}^{31}&\sum\limits_{i=1}^{2}a_{i1}^{32}&\sum\limits_{i=1}^{2}a_{i1}^{33} \\ \sum\limits_{j=1}^{2}a_{1j}^{13}&\sum\limits_{i,j=1}^{2}a_{ij}^{11}&\sum\limits_{i,j=1}^{2}a_{ij}^{12}&\sum\limits_{i,j=1}^{2}a_{ij}^{13}\\
\sum\limits_{j=1}^{2}a_{1j}^{23}&\sum\limits_{i,j=1}^{2}a_{ij}^{21}&\sum\limits_{i,j=1}^{2}a_{ij}^{22}&\sum\limits_{i,j=1}^{2}a_{ij}^{23}\\
\sum\limits_{j=1}^{2}a_{1j}^{33}&\sum\limits_{i,j=1}^{2}a_{ij}^{31}&\sum\limits_{i,j=1}^{2}a_{ij}^{32}&\sum\limits_{i,j=1}^{2}a_{ij}^{33}
\end{pmatrix}.
\end{gather*}
For $\alpha=1,2$, denote
\begin{gather*}
\mathbb{F}_{1}^{\alpha}:=\begin{pmatrix} b_{1}^{\alpha}&a_{11}^{\alpha3}&\sum\limits_{i=1}^{2}a_{i1}^{\alpha1}&\sum\limits_{i=1}^{2}a_{i1}^{\alpha2}&\sum\limits_{i=1}^{2}a_{i1}^{\alpha3} \\
b_{1}^{3}&a_{11}^{33}&\sum\limits_{i=1}^{2}a_{i1}^{31}&\sum\limits_{i=1}^{2}a_{i1}^{32}&\sum\limits_{i=1}^{2}a_{i1}^{33}\\
\sum\limits_{i=1}^{2}b_{i}^{1}&\sum\limits_{j=1}^{2}a_{1j}^{13}&\sum\limits_{i,j=1}^{2}a_{ij}^{11}&\sum\limits_{i,j=1}^{2}a_{ij}^{12}&\sum\limits_{i,j=1}^{2}a_{ij}^{13}\\
\sum\limits_{i=1}^{2}b_{i}^{2}&\sum\limits_{j=1}^{2}a_{1j}^{23}&\sum\limits_{i,j=1}^{2}a_{ij}^{21}&\sum\limits_{i,j=1}^{2}a_{ij}^{22}&\sum\limits_{i,j=1}^{2}a_{ij}^{23}\\
\sum\limits_{i=1}^{2}b_{i}^{3}&\sum\limits_{j=1}^{2}a_{1j}^{33}&\sum\limits_{i,j=1}^{2}a_{ij}^{31}&\sum\limits_{i,j=1}^{2}a_{ij}^{32}&\sum\limits_{i,j=1}^{2}a_{ij}^{33}
\end{pmatrix},
\end{gather*}
and for $\alpha=3$,
\begin{gather*}
\mathbb{F}_{1}^{3}:=\begin{pmatrix}
b_{1}^{3}&\sum\limits_{i=1}^{2}a_{i1}^{31}&\sum\limits_{i=1}^{2}a_{i1}^{32}&\sum\limits_{i=1}^{2}a_{i1}^{33} \\ \sum\limits_{i=1}^{2}b_{i}^{1}&\sum\limits_{i,j=1}^{2}a_{ij}^{11}&\sum\limits_{i,j=1}^{2}a_{ij}^{12}&\sum\limits_{i,j=1}^{2}a_{ij}^{13}\\
\sum\limits_{i=1}^{2}b_{i}^{2}&\sum\limits_{i,j=1}^{2}a_{ij}^{21}&\sum\limits_{i,j=1}^{2}a_{ij}^{22}&\sum\limits_{i,j=1}^{2}a_{ij}^{23}\\
\sum\limits_{i=1}^{2}b_{i}^{3}&\sum\limits_{i,j=1}^{2}a_{ij}^{31}&\sum\limits_{i,j=1}^{2}a_{ij}^{32}&\sum\limits_{i,j=1}^{2}a_{ij}^{33}
\end{pmatrix}.
\end{gather*}

Then it follows from Lemma \ref{KM323} and \eqref{zzw002} that
\begin{align*}
\det\mathbb{F}_{1}^{\alpha}=\det\mathbb{F}_{1}^{\ast\alpha}+O(\varepsilon^{\frac{\gamma^{2}}{2(1+2\gamma)(1+\gamma)^{2}}}),\quad \alpha=1,2,
\end{align*}
and
\begin{align*}
\det\mathbb{F}_{1}^{3}=\det\mathbb{F}_{1}^{\ast\alpha}+O(\varepsilon^{\frac{\gamma}{2(1+\gamma)}}),\quad\det\mathbb{F}_{0}=\det\mathbb{F}^{\ast}_{0}+O(\varepsilon^{\frac{\gamma}{2(1+2\gamma)}}),
\end{align*}
which yields that for $\alpha=1,2,$
\begin{align}\label{QGH01}
\frac{\det\mathbb{F}_{1}^{\alpha}
}{\det\mathbb{F}_{0}}=&\frac{\det\mathbb{F}_{1}^{\ast\alpha}}{\det\mathbb{F}_{0}^{\ast}}\frac{1}{1-{\frac{\det\mathbb{F}_{0}^{\ast}-\det\mathbb{F}_{0}}{\det\mathbb{F}_{0}^{\ast}}}}+\frac{\det\mathbb{F}_{1}^{\alpha}-\det\mathbb{F}_{1}^{\ast\alpha}}{\det\mathbb{F}_{0}}\notag\\
=&\frac{\det\mathbb{F}^{\ast\alpha}_{1}}{\det\mathbb{F}_{0}^{\ast}}(1+O(\varepsilon^{\frac{\gamma^{2}}{2(1+2\gamma)(1+\gamma)^{2}}})),
\end{align}
and
\begin{align}\label{DBU01}
\frac{\det\mathbb{F}_{1}^{3}
}{\det\mathbb{F}_{0}}=&\frac{\det\mathbb{F}_{1}^{\ast3}}{\det\mathbb{F}_{0}^{\ast}}\frac{1}{1-{\frac{\det\mathbb{F}_{0}^{\ast}-\det\mathbb{F}_{0}}{\det\mathbb{F}_{0}^{\ast}}}}+\frac{\det\mathbb{F}_{1}^{3}-\det\mathbb{F}_{1}^{\ast3}}{\det\mathbb{F}_{0}}\notag\\
=&\frac{\det\mathbb{F}^{\ast3}_{1}}{\det\mathbb{F}_{0}^{\ast}}(1+O(\varepsilon^{\frac{\gamma}{2(1+2\gamma)}})).
\end{align}
We now claim that $\det\mathbb{F}_{0}^{\ast}\neq0$. In fact, for any $\xi=(\xi_{1},\xi_{2},\xi_{3},\xi_{4})^{T}\neq0$, we see from ellipticity condition \eqref{ellip} that
\begin{align*}
\xi^{T}\mathbb{F}_{0}^{\ast}\xi=&\int_{\Omega^{\ast}}\Bigg(\mathbb{C}^{0}e\bigg(\xi_{1}v_{1}^{\ast3}+\sum^{3}_{\alpha=1}\xi_{\alpha+1}(v_{1}^{\ast\alpha}+v_{2}^{\ast\alpha})\bigg), e\bigg(\xi_{1}v_{1}^{\ast3}+\sum^{3}_{\beta=1}\xi_{\beta+1}(v_{1}^{\ast\beta}+v_{2}^{\ast\beta})\bigg)\Bigg)\notag\\
\geq&\frac{1}{C}\int_{\Omega^{\ast}}\bigg|e\bigg(\xi_{1}v_{1}^{\ast3}+\sum^{3}_{\alpha=1}\xi_{\alpha+1}(v_{1}^{\ast\alpha}+v_{2}^{\ast\alpha})\bigg)\bigg|^{2}>0,
\end{align*}
where in the last inequality we used the fact that $e\big(\xi_{1}v_{1}^{\ast3}+\sum^{3}_{\alpha=1}\xi_{\alpha+1}(v_{1}^{\ast\alpha}+v_{2}^{\ast\alpha})\big)$ is not identically zero. Otherwise, if $$e\bigg(\xi_{1}v_{1}^{\ast3}+\sum^{3}_{\alpha=1}\xi_{\alpha+1}(v_{1}^{\ast\alpha}+v_{2}^{\ast\alpha})\bigg)=0,$$ then
\begin{align}\label{DAK}
\xi_{1}v_{1}^{\ast3}+\sum^{3}_{\alpha=1}\xi_{\alpha+1}(v_{1}^{\ast\alpha}+v_{2}^{\ast\alpha})=\sum^{3}_{i=1}a_{i}\psi_{i},
\end{align}
for some constants $a_{i}$, $i=1,2,3$. In view of the fact that $v_{1}^{\ast\alpha}=v_{2}^{\ast\alpha}=0$ on $\partial D$, it follows from \eqref{DAK} that $\sum^{3}_{i=1}a_{i}\psi_{i}=0$, which implies that $a_{i}=0$, $i=1,2,3$. Since
\begin{align*}
0=&\xi_{1}v_{1}^{\ast3}+\sum^{3}_{\alpha=1}\xi_{\alpha+1}(v_{1}^{\ast\alpha}+v_{2}^{\ast\alpha})\notag\\
=&
\begin{cases}
\sum\limits^{2}_{\alpha=1}\xi_{\alpha+1}\psi_{\alpha}+(\xi_{1}+\xi_{4})\psi_{3},&\mathrm{on}\;\partial D_{1}^{\ast},\\
\sum\limits^{3}_{\alpha=1}\xi_{\alpha+1}\psi_{\alpha},&\mathrm{on}\;\partial D_{2},
\end{cases}
\end{align*}
then we obtain that $\xi=0$. This is a contradiction.

In light of \eqref{zzw001}, we obtain that for $i=1,2,$
\begin{align}\label{AJDC0}
\frac{1}{a_{ii}}=&\frac{\varepsilon^{\frac{\gamma}{1+\gamma}}}{\mathcal{L}_{2}^{i}\mathcal{M}_{\gamma,\tau}}\frac{1}{1-\frac{\mathcal{L}_{2}^{i}\mathcal{M}_{\gamma,\tau}-\varepsilon^{\frac{\gamma}{1+\gamma}}a_{ii}}{\mathcal{L}_{2}^{i}\mathcal{M}_{\gamma,\tau}}}=\frac{\varepsilon^{\frac{\gamma}{1+\gamma}}(1+O(\tilde{\varepsilon}(\gamma,\sigma)))}{\mathcal{L}_{2}^{i}\mathcal{M}_{\gamma,\tau}},
\end{align}
where $\tilde{\varepsilon}(\gamma,\sigma)$ is defined by \eqref{APDN001}.
Then combining \eqref{PLA001}, \eqref{QGH01}--\eqref{DBU01} and \eqref{AJDC0}, it follows from Cramer's rule that for $\alpha=1,2,$
\begin{align*}
C_{1}^{\alpha}-C_{2}^{\alpha}=&\frac{\prod\limits_{i\neq\alpha}^{2}a_{11}^{ii}\det\mathbb{F}_{1}^{\alpha}}{\prod\limits_{i=1}^{2}a_{11}^{ii}\det \mathbb{F}_{0}}(1+O(\varepsilon^{\frac{\gamma}{1+\gamma}}|\ln\varepsilon|))=\frac{\det\mathbb{F}_{1}^{\ast\alpha}}{\det \mathbb{F}_{0}^{\ast}}\frac{\varepsilon^{\frac{\gamma}{1+\gamma}}(1+O(\varepsilon(\gamma,\sigma)))}{\mathcal{L}_{2}^{i}\mathcal{M}_{\gamma,\tau}},
\end{align*}
and for $\alpha=3$,
\begin{align*}
C_{1}^{3}-C_{2}^{3}=&\frac{\det\mathbb{F}_{1}^{3}}{\det \mathbb{F}_{0}}(1+O(\varepsilon^{\frac{\gamma}{1+\gamma}}))=\frac{\det\mathbb{F}_{1}^{\ast3}}{\det \mathbb{F}_{0}^{\ast}}(1+O(\varepsilon^{\frac{\gamma}{2(1+2\gamma)}})).
\end{align*}

{\bf Step 2.} If $d\geq3$, we replace the elements of $\alpha$-th column in the matrices $\mathbb{A}$ and $\mathbb{C}$ by column vectors $\Big(b_{1}^{1},...,b_{1}^{\frac{d(d+1)}{2}}\Big)^{T}$ and $\Big(\sum\limits_{i=1}^{2}b_{i}^{1},...,\sum\limits_{i=1}^{2}b_{i}^{\frac{d(d+1)}{2}}\Big)^{T}$, respectively, and then denote these two new matrices by $\mathbb{A}_{2}^{\alpha}$ and $\mathbb{C}_{2}^{\alpha}$ as follows:
\begin{gather*}
\mathbb{A}_{2}^{\alpha}=
\begin{pmatrix}
a_{11}^{11}&\cdots&b_{1}^{1}&\cdots&a_{11}^{1\,\frac{d(d+1)}{2}} \\\\ \vdots&\ddots&\vdots&\ddots&\vdots\\\\a_{11}^{\frac{d(d+1)}{2}\,1}&\cdots&b_{1}^{\frac{d(d+1)}{2}}&\cdots&a_{11}^{\frac{d(d+1)}{2}\,\frac{d(d+1)}{2}}
\end{pmatrix},
\end{gather*}
and
\begin{gather*}
\mathbb{C}_{2}^{\alpha}=
\begin{pmatrix}
\sum\limits_{j=1}^{2} a_{1j}^{11}&\cdots&\sum\limits_{i=1}^{2}b_{i}^{1}&\cdots&\sum\limits_{j=1}^{2} a_{1j}^{1\,\frac{d(d+1)}{2}} \\\\ \vdots&\ddots&\vdots&\ddots&\vdots\\\\\sum\limits_{j=1}^{2} a_{1j}^{\frac{d(d+1)}{2}\,1}&\cdots&\sum\limits_{i=1}^{2}b_{i}^{\frac{d(d+1)}{2}}&\cdots&\sum\limits_{j=1}^{2} a_{1j}^{\frac{d(d+1)}{2}\,\frac{d(d+1)}{2}}
\end{pmatrix}.
\end{gather*}
Define
\begin{align*}
\mathbb{F}^{\alpha}_{2}=\begin{pmatrix} \mathbb{A}^{\alpha}_{2}&\mathbb{B} \\  \mathbb{C}^{\alpha}_{2}&\mathbb{D}
\end{pmatrix},\quad \mathbb{F}=\begin{pmatrix} \mathbb{A}&\mathbb{B} \\  \mathbb{C}&\mathbb{D}
\end{pmatrix}.
\end{align*}
Then it follows from Lemmas \ref{KM323}--\ref{lemmabc} that
\begin{align*}
\det\mathbb{F}_{2}^{\alpha}=\det\mathbb{F}_{2}^{\ast\alpha}+O(\bar{\varepsilon}(\gamma,d)),\quad\det\mathbb{F}=\det\mathbb{F}^{\ast}+O(\bar{\varepsilon}(\gamma,d)).
\end{align*}
Similarly as before, we obtain that $\det\mathbb{F}^{\ast}\neq0$. Thus we obtain
\begin{align*}
\frac{\det\mathbb{F}_{2}^{\alpha}
}{\det\mathbb{F}}=&\frac{\det\mathbb{F}_{2}^{\ast\alpha}}{\det\mathbb{F}^{\ast}}\frac{1}{1-{\frac{\det\mathbb{F}^{\ast}-\det\mathbb{F}}{\det\mathbb{F}^{\ast}}}}+\frac{\det\mathbb{F}_{2}^{\alpha}-\det\mathbb{F}_{2}^{\ast\alpha}}{\det\mathbb{F}}=\frac{\det\mathbb{F}^{\ast\alpha}_{2}}{\det\mathbb{F}^{\ast}}(1+O(\bar{\varepsilon}(\gamma,d))),
\end{align*}
which, together with \eqref{PLA001} and Cramer's rule, reads that for $\alpha=1,2,...,\frac{d(d+1)}{2}$,
\begin{align*}
C_{1}^{\alpha}-C_{2}^{\alpha}=&\frac{\det\mathbb{F}_{2}^{\alpha}}{\det \mathbb{F}}=\frac{\det\mathbb{F}_{2}^{\ast\alpha}}{\det \mathbb{F}^{\ast}}(1+O(\bar{\varepsilon}(\gamma,d))).
\end{align*}

\end{proof}

\section{An example of two adjacent curvilinear squares with rounded-off angles}\label{SEC006}

\begin{figure}[htb]
\center{\includegraphics[width=0.45\textwidth]{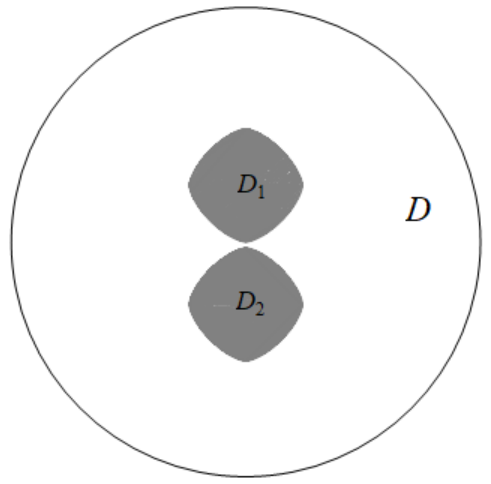}}
\caption{Curvilinear squares with rounded-off angles, $\gamma=\frac{1}{2}$}
\end{figure}
In this section, we aim to give a more precise characterization of the stress concentration for two adjacent curvilinear squares with rounded-off angles in dimension two, see Figure 1. Suppose that the interfacial boundaries of inclusions $\partial D_{1}$ and $\partial D_{2}$ can be, respectively, represented by
\begin{align}\label{ZZW986}
|x_{1}|^{1+\gamma}+|x_{2}-\varepsilon-r_{1}|^{1+\gamma}=&r_{1}^{1+\gamma}\quad\mathrm{and}\quad|x_{1}|^{1+\gamma}+|x_{2}+r_{2}|^{1+\gamma}=r_{2}^{1+\gamma},
\end{align}
where $r_{i}$, $i=1,2$ are two positive constants independent of $\varepsilon$. Define
\begin{align}\label{ZCZ009}
\tau_{0}:=\frac{1}{1+\gamma}\left(\frac{1}{r_{1}^{\gamma}}+\frac{1}{r_{2}^{\gamma}}\right).
\end{align}
Then, we obtain
\begin{example}\label{coro00389}
Assume as above, condition \eqref{ZZW986} holds. Let $u\in H^{1}(D;\mathbb{R}^{2})\cap C^{1}(\overline{\Omega};\mathbb{R}^{2})$ be the solution of (\ref{La.002}). Then for a sufficiently small $\varepsilon>0$ and $x\in\Omega_{r_{0}}$, $0<r_{0}<\frac{1}{2}\min\{r_{1},r_{2}\}$ is a small constant independent of $\varepsilon$,
\begin{align}\label{ALPN001}
\nabla u=&\sum\limits^{2}_{\alpha=1}\frac{\det\mathbb{F}_{1}^{\ast\alpha}}{\det \mathbb{F}_{0}^{\ast}}\frac{\varepsilon^{\frac{\gamma}{1+\gamma}}}{\mathcal{L}_{2}^{\alpha}\mathcal{M}_{\gamma,\tau_{0}}}\frac{1+O(\varepsilon^{\frac{\gamma^{2}}{2(1+2\gamma)(1+\gamma)^{2}}})}{1+\mathcal{G}^{\ast}_{\alpha}\varepsilon^{\frac{\gamma}{1+\gamma}}}\nabla\bar{u}_{1}^{\alpha}\notag\\
&+\frac{\det\mathbb{F}_{1}^{\ast3}}{\det \mathbb{F}_{0}^{\ast}}(1+O(\varepsilon^{\frac{\gamma}{2(1+2\gamma)}}))\nabla\bar{u}_{1}^{3}+O(1)\delta^{-\frac{1-\alpha}{1+\alpha}}\|\varphi\|_{C^{1}(\partial D)},
\end{align}
where $\delta$ is defined in \eqref{deta}, the explicit auxiliary functions $\bar{u}_{1}^{\alpha}$, $\alpha=1,2,3$ are defined in \eqref{zzwz002} in the case of $d=2$, the constant $\mathcal{M}_{\gamma,\tau_{0}}$ is defined in \eqref{zwzh001} with $\tau=\tau_{0}$, the Lam\'{e} constants $\mathcal{L}_{2}^{\alpha}$, $\alpha=1,2$ is defined in \eqref{AZ}, the blow-up factor matrices $\mathbb{F}_{0}^{\ast}$ and $\mathbb{F}^{\ast\alpha}_{1},$ $\alpha=1,2,3$ are defined by \eqref{ZWZML001}--\eqref{ZWZML003}, the rest term $\bar{\varepsilon}(\gamma,\sigma)$ is defined in \eqref{GC002}, the geometry constants $\mathcal{G}^{\ast}_{\alpha}$, $\alpha=1,2$ are defined by \eqref{QKLP001} below.
\end{example}
\begin{remark}
From the view of industrial application and numerical computation, this type of axisymmetric inclusions considered in Example \ref{coro00389} is more realistic than the generalized $C^{1,\gamma}$-inclusions due to its explicit regular shapes. We then give a more precise characterization in terms of the singular behavior of the stress concentration in virtue of the $\varepsilon$-independent geometry constant $\mathcal{G}^{\ast}_{\alpha}$, $\alpha=1,2$ captured in \eqref{ALPN001}.

\end{remark}

\begin{lemma}
Assume as in Example \ref{coro00389}. Then, for a sufficiently small $\varepsilon>0$, $\alpha=1,2,$
\begin{align}\label{KAN001}
a_{11}^{\alpha\alpha}=&\mathcal{L}_{2}^{\alpha}\mathcal{M}_{\gamma,\tau_{0}}\varepsilon^{-\frac{\gamma}{1+\gamma}}+\mathcal{K}^{\ast}_{\alpha}+O(1)|\ln\varepsilon|,
\end{align}
where $\mathcal{M}_{\gamma,\tau_{0}}$ is defined by \eqref{zwzh001} with $\tau=\tau_{0}$, $\mathcal{L}_{2}^{\alpha}$, $\alpha=1,2$ are defined in \eqref{AZ} with $d=2$, $\mathcal{K}^{\ast}_{\alpha}$, $\alpha=1,2$ are defined by \eqref{LKM} below.

\end{lemma}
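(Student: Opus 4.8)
The plan is to reduce the statement to the computation already performed in the proof of Lemma~\ref{lemmabc} for $d=2$, and then to evaluate the leading geometric integral keeping its order-$\varepsilon^{0}$ term explicit instead of absorbing it into the error. First I would record the profiles: near the origin the interfaces in \eqref{ZZW986} are the graphs of
\[
h_{1}(x_{1})=r_{1}-\bigl(r_{1}^{1+\gamma}-|x_{1}|^{1+\gamma}\bigr)^{\frac{1}{1+\gamma}},\qquad
h_{2}(x_{1})=-r_{2}+\bigl(r_{2}^{1+\gamma}-|x_{1}|^{1+\gamma}\bigr)^{\frac{1}{1+\gamma}},
\]
which are even, of class $C^{1,\gamma}$, and, by Taylor expanding in the variable $|x_{1}|^{1+\gamma}$, satisfy $(h_{1}-h_{2})(x_{1})=\tau_{0}|x_{1}|^{1+\gamma}+G(x_{1})$ with $G$ even, smooth on $B'_{r_{0}}\setminus\{0\}$ and $G(x_{1})=O(|x_{1}|^{2(1+\gamma)})$, where $\tau_{0}$ is the constant in \eqref{ZCZ009}. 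In particular conditions (\textbf{H1})--(\textbf{H3}) hold with $\tau=\tau_{0}$ and $\sigma=1+\gamma$, so every intermediate estimate in the proof of Lemma~\ref{lemmabc} is available for this configuration; from the identity behind \eqref{aaaa01} (case $d=2$), recombining the two integrals there into one integral over $\{|x_{1}|<R\}$ gives
\[
a_{11}^{\alpha\alpha}=\mathcal{L}_{2}^{\alpha}\int_{|x_{1}|<R}\frac{dx_{1}}{\varepsilon+h_{1}-h_{2}}
+\mathcal{L}_{2}^{\alpha}\int_{\varepsilon^{\bar\theta}<|x_{1}|<R}\frac{\varepsilon\,dx_{1}}{(h_{1}-h_{2})(\varepsilon+h_{1}-h_{2})}+O(1)|\ln\varepsilon|,
\]
with $\bar\theta=\tfrac{\gamma^{2}}{2(1+2\gamma)(1+\gamma)^{2}}$ and $R$ the radius on which the profiles are described (one may take $R=r_{0}$). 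On $\{|x_{1}|>\varepsilon^{\bar\theta}\}$ one has $h_{1}-h_{2}\gtrsim\varepsilon^{\bar\theta(1+\gamma)}\gg\varepsilon$, so the middle integral is $O(\varepsilon^{1-\bar\theta(1+2\gamma)})=o(1)$ and can be dropped into the remainder.

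The heart of the proof is the sharp evaluation of $\int_{|x_{1}|<R}(\varepsilon+h_{1}-h_{2})^{-1}\,dx_{1}$, which refines \eqref{ZZQ01} by retaining its constant term. Using $h_{1}-h_{2}=\tau_{0}|x_{1}|^{1+\gamma}+G(x_{1})$ I would split
\[
\int_{|x_{1}|<R}\frac{dx_{1}}{\varepsilon+h_{1}-h_{2}}
=\int_{|x_{1}|<R}\frac{dx_{1}}{\varepsilon+\tau_{0}|x_{1}|^{1+\gamma}}
-\int_{|x_{1}|<R}\frac{G(x_{1})\,dx_{1}}{(\varepsilon+\tau_{0}|x_{1}|^{1+\gamma})(\varepsilon+h_{1}-h_{2})}.
\]
For the first term, extend the integral to $\mathbb{R}$, substitute $x_{1}=(\varepsilon/\tau_{0})^{1/(1+\gamma)}s$, and use the Beta-function identity $\int_{0}^{\infty}(1+s^{1+\gamma})^{-1}\,ds=\Gamma_{\gamma}/(1+\gamma)$ to obtain $\mathcal{M}_{\gamma,\tau_{0}}\varepsilon^{-\gamma/(1+\gamma)}$, up to the tail $-2\int_{R}^{\infty}(\varepsilon+\tau_{0}x_{1}^{1+\gamma})^{-1}\,dx_{1}=-2R^{-\gamma}/(\gamma\tau_{0})+O(\varepsilon)$. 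For the second term, the integrand stays bounded near the origin since $G(x_{1})=O(|x_{1}|^{2(1+\gamma)})$ while the denominator is comparable to $(\tau_{0}|x_{1}|^{1+\gamma})^{2}$, so it converges as $\varepsilon\to0$ to $\int_{|x_{1}|<R}G(x_{1})\bigl(\tau_{0}|x_{1}|^{1+\gamma}(h_{1}-h_{2})(x_{1})\bigr)^{-1}\,dx_{1}$, with error $o(1)$.

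Collecting everything yields $a_{11}^{\alpha\alpha}=\mathcal{L}_{2}^{\alpha}\mathcal{M}_{\gamma,\tau_{0}}\varepsilon^{-\gamma/(1+\gamma)}+\mathcal{K}^{\ast}_{\alpha}+O(1)|\ln\varepsilon|$, where $\mathcal{K}^{\ast}_{\alpha}$ is the $\varepsilon$-independent quantity built from the energy of the touching profile $v_{1}^{\ast\alpha}$ over $\Omega^{\ast}\setminus\Omega^{\ast}_{R}$, the boundary-layer integrals over $\Omega^{\ast}_{R}$ (these appear in the analysis of $\mathrm{I}$ and $\mathrm{III}$ in the proof of Lemma~\ref{lemmabc}), and $\mathcal{L}_{2}^{\alpha}$ times the two convergent geometric integrals above; the precise combination is \eqref{LKM}, formed exactly as the constant $M^{\ast}_{\alpha}$ was formed after \eqref{YUAQPQ01} in the case $d\geq3$. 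The one routine sanity check that must accompany this is that, although each of the tail $\int_{R}^{\infty}$, the $G$-correction, and the bounded parts of the energy over $\Omega^{\ast}\setminus\Omega^{\ast}_{R}$ and $\Omega^{\ast}_{R}\setminus\Omega^{\ast}_{\varepsilon^{\bar\theta}}$ depends on the (a priori arbitrary) truncation radius $R$, their sum does not, so $\mathcal{K}^{\ast}_{\alpha}$ is a genuine geometry constant.

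\textbf{The main obstacle.} The geometric integral is mechanical once the profile \eqref{ZZW986} is written down; the real care is in the order-$\varepsilon^{0}$ bookkeeping just described, namely isolating every bounded contribution and verifying the $R$-cancellation. Separately, one must keep in mind that the cross term $\int_{\Omega_{\varepsilon^{\bar\theta}}}(\mathbb{C}^{0}e(\bar u_{1}^{\alpha}),e(v_{1}^{\alpha}-\bar u_{1}^{\alpha}))$ is controlled only at precision $O(1)|\ln\varepsilon|$ by the interior/boundary estimates behind Lemma~\ref{lemmabc}, so that is the precision of the remainder in \eqref{KAN001}; accordingly $\mathcal{K}^{\ast}_{\alpha}$ is to be read as the explicitly computable geometric-plus-energy constant rather than as a full $\varepsilon\to0$ limit of $a_{11}^{\alpha\alpha}-\mathcal{L}_{2}^{\alpha}\mathcal{M}_{\gamma,\tau_{0}}\varepsilon^{-\gamma/(1+\gamma)}$.
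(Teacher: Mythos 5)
Your proposal is correct and follows essentially the same route as the paper: starting from the $d=2$ case of \eqref{aaaa01}, Taylor-expanding $h_{1}-h_{2}=\tau_{0}|x_{1}|^{1+\gamma}+O(|x_{1}|^{2(1+\gamma)})$, evaluating the model integral exactly via the Beta-function identity to get $\mathcal{M}_{\gamma,\tau_{0}}\varepsilon^{-\gamma/(1+\gamma)}$, and keeping the tail $-\tfrac{2}{\gamma\tau_{0}r_{0}^{\gamma}}$ together with the convergent correction constant (your $G$-integral is exactly the paper's $C^{\ast}$, and your recombination of the two truncated integrals is algebraically equivalent to the paper's). The only divergence is bookkeeping: the paper's \eqref{LKM} sets $\mathcal{K}^{\ast}_{\alpha}=C^{\ast}-\tfrac{2\mathcal{L}_{2}^{\alpha}}{\gamma\tau_{0}r_{0}^{\gamma}}$ and leaves the energy-type contributions in the $O(1)|\ln\varepsilon|$ remainder rather than folding them into $\mathcal{K}^{\ast}_{\alpha}$ as you suggest, a distinction that is immaterial at the stated precision and that your closing remark already acknowledges.
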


\begin{proof}
Pick $\theta=\frac{\gamma^{2}}{2(1+2\gamma)(1+\gamma)^{2}}$. Similarly as in \eqref{aaaa01}, we obtain that for $\alpha=1,2$,
\begin{align*}
a_{11}^{\alpha\alpha}=&\mathcal{L}_{2}^{\alpha}\left(\int_{\varepsilon^{\theta}<|x_{1}|<r_{0}}\frac{dx_{1}}{h_{1}(x_{1})-h_{2}(x_{1})}+\int_{|x_{1}|<\varepsilon^{\theta}}\frac{dx_{1}}{\varepsilon+h_{1}(x_{1})-h_{2}(x_{1})}\right)+O(1)|\ln\varepsilon|.
\end{align*}

To begin with, it follows from Taylor expansion that
\begin{align}\label{AMBZ001}
h_{1}(x_{1})-h_{2}(x_{1})=\tau_{0}|x_{1}|^{1+\gamma}+O(|x_{1}|^{2+2\gamma}),\quad |x_{1}|\leq r_{0},
\end{align}
where $\tau_{0}$ is defined in \eqref{ZCZ009}. Using \eqref{AMBZ001}, we have
\begin{align*}
\int_{\varepsilon^{\theta}<|x_{1}|<r_{0}}\left(\frac{1}{h_{1}-h_{2}}-\frac{1}{\tau_{0}|x_{1}|^{1+\gamma}}\right)dx_{1}=\int_{\varepsilon^{\theta}<|x_{1}|<r_{0}}O(1)dx_{1}=C^{\ast}+O(1)\varepsilon^{\theta},
\end{align*}
where $C^{\ast}$ depends on $\tau_{0},r_{0}$, but not on $\varepsilon$. Then
\begin{align*}
\int_{\varepsilon^{\theta}<|x_{1}|<r_{0}}\frac{dx_{1}}{h_{1}-h_{2}}=&\int_{\varepsilon^{\theta}<|x_{1}|<r_{0}}\frac{dx_{1}}{\tau_{0}|x_{1}|^{1+\gamma}}+C^{\ast}+O(1)\varepsilon^{\theta}.
\end{align*}
Analogously, we have
\begin{align*}
\int_{|x_{1}|<\varepsilon^{\theta}}\frac{dx_{1}}{\varepsilon+h_{1}-h_{2}}=&\int_{|x_{1}|<\varepsilon^{\theta}}\frac{dx_{1}}{\varepsilon+\tau_{0}|x_{1}|^{1+\gamma}}+O(1)\varepsilon^{\theta}.
\end{align*}
Therefore, the energy $a_{11}^{\alpha\alpha}$ becomes
\begin{align*}
a_{11}^{\alpha\alpha}=&\mathcal{L}_{2}^{\alpha}\left(\int_{\varepsilon^{\theta}<|x_{1}|<r_{0}}\frac{dx_{1}}{\tau_{0}|x_{1}|^{1+\gamma}}+\int_{|x_{1}|<\varepsilon^{\theta}}\frac{dx_{1}}{\varepsilon+\tau_{0}|x_{1}|^{1+\gamma}}\right)+C^{\ast}+O(1)|\ln\varepsilon|.
\end{align*}
Observe that
\begin{align*}
&\int_{\varepsilon^{\theta}<|x_{1}|<r_{0}}\frac{dx_{1}}{\tau_{0}|x_{1}|^{1+\gamma}}+\int_{|x_{1}|<\varepsilon^{\theta}}\frac{dx_{1}}{\varepsilon+\tau_{0}|x_{1}|^{1+\gamma}}\notag\\
=&\int_{-\infty}^{+\infty}\frac{1}{\varepsilon+\tau_{0}|x_{1}|^{1+\gamma}}-\int_{|x_{1}|>r_{0}}\frac{dx_{1}}{\tau_{0}|x_{1}|^{1+\gamma}}+\int_{|x_{1}|>\varepsilon^{\theta}}\frac{\varepsilon}{\tau_{0}|x_{1}|^{1+\gamma}(\varepsilon+\tau_{0}|x_{1}|^{1+\gamma})}\notag\\
=&\mathcal{M}_{\gamma,\tau_{0}}\varepsilon^{-\frac{\gamma}{1+\gamma}}-\frac{2}{\gamma\tau_{0}r_{0}^{\gamma}}+O(1)\varepsilon^{1-(1+2\gamma)\theta},
\end{align*}
we deduce that for $\alpha=1,2,$
\begin{align*}
a_{11}^{\alpha\alpha}=&\mathcal{L}_{2}^{\alpha}\mathcal{M}_{\gamma,\tau_{0}}\varepsilon^{-\frac{\alpha}{1+\alpha}}+\mathcal{K}^{\ast}_{\alpha}+O(1)|\ln\varepsilon|,
\end{align*}
where
\begin{align}\label{LKM}
\mathcal{K}^{\ast}_{\alpha}=C^{\ast}-\frac{2\mathcal{L}_{2}^{\alpha}}{\gamma\tau_{0}r_{0}^{\gamma}}.
\end{align}

\end{proof}

\begin{proof}[Proof of Example \ref{coro00389}]

Denote
\begin{align}\label{QKLP001}
\mathcal{G}^{\ast}_{\alpha}=\frac{\mathcal{K}^{\ast}_{\alpha}}{\mathcal{L}_{2}^{\alpha}\mathcal{M}_{\gamma,\tau_{0}}},\quad \alpha=1,2.
\end{align}
Making use of \eqref{KAN001}, we obtain
\begin{align}\label{DYA001}
\frac{1}{a_{11}^{\alpha\alpha}}=&\frac{\varepsilon^{\frac{\gamma}{1+\gamma}}}{\mathcal{L}_{2}^{\alpha}\mathcal{M}_{\gamma,\tau_{0}}}\frac{1}{1-\frac{\mathcal{L}_{2}^{\alpha}\mathcal{M}_{\gamma,\tau_{0}}-\varepsilon^{\frac{\gamma}{1+\gamma}}a_{11}^{\alpha\alpha}}{\mathcal{L}_{2}^{\alpha}\mathcal{M}_{\gamma,\tau_{0}}}}=\frac{\varepsilon^{\frac{\gamma}{1+\gamma}}}{\mathcal{L}_{2}^{\alpha}\mathcal{M}_{\gamma,\tau_{0}}}\frac{1}{1+\mathcal{G}^{\ast}_{\alpha}\varepsilon^{\frac{\gamma}{1+\gamma}}+O(\varepsilon^{\frac{\gamma}{1+\gamma}}|\ln\varepsilon|)}\notag\\
=&\frac{\varepsilon^{\frac{\gamma}{1+\gamma}}}{\mathcal{L}_{2}^{\alpha}\mathcal{M}_{\gamma,\tau_{0}}}\frac{1+O(\varepsilon^{\frac{\gamma}{1+\gamma}}|\ln\varepsilon|)}{1+\mathcal{G}^{\ast}_{\alpha}\varepsilon^{\frac{\gamma}{1+\gamma}}}.
\end{align}
Then in view of \eqref{QGH01}--\eqref{DBU01} and \eqref{DYA001}, it follows from \eqref{PLA001} and Cramer's rule that for $\alpha=1,2,$
\begin{align*}
C_{1}^{\alpha}-C_{2}^{\alpha}=&\frac{\prod\limits_{i\neq\alpha}^{2}a_{11}^{ii}\det\mathbb{F}_{1}^{\alpha}}{\prod\limits_{i=1}^{2}a_{11}^{ii}\det \mathbb{F}_{0}}(1+O(\varepsilon^{\frac{\gamma}{1+\gamma}}|\ln\varepsilon|))\notag\\
=&\frac{\det\mathbb{F}_{1}^{\ast\alpha}}{\det \mathbb{F}_{0}^{\ast}}\frac{\varepsilon^{\frac{\gamma}{1+\gamma}}}{\mathcal{L}_{2}^{\alpha}\mathcal{M}_{\gamma,\tau_{0}}}\frac{1+O(\varepsilon^{\frac{\gamma^{2}}{2(1+2\gamma)(1+\gamma)^{2}}})}{1+\mathcal{G}^{\ast}_{\alpha}\varepsilon^{\frac{\gamma}{1+\gamma}}},
\end{align*}
and for $\alpha=3$,
\begin{align*}
C_{1}^{3}-C_{2}^{3}=&\frac{\det\mathbb{F}_{1}^{3}}{\det \mathbb{F}_{0}}(1+O(\varepsilon^{\frac{\gamma}{1+\gamma}}))=\frac{\det\mathbb{F}_{1}^{\ast3}}{\det \mathbb{F}_{0}^{\ast}}(1+O(\varepsilon^{\frac{\gamma}{2(1+2\gamma)}})).
\end{align*}
This, in combination with decomposition \eqref{Decom002}, Corollaries \ref{thm86}--\ref{coro00z}, Lemma \ref{PAK001} and Theorem \ref{OMG123}, reads that
\begin{align*}
\nabla u=&\sum\limits^{2}_{\alpha=1}\frac{\det\mathbb{F}_{1}^{\ast\alpha}}{\det \mathbb{F}_{0}^{\ast}}\frac{\varepsilon^{\frac{\gamma}{1+\gamma}}}{\mathcal{L}_{2}^{\alpha}\mathcal{M}_{\gamma,\tau_{0}}}\frac{1+O(\varepsilon^{\frac{\gamma^{2}}{2(1+2\gamma)(1+\gamma)^{2}}})}{1+\mathcal{G}^{\ast}_{\alpha}\varepsilon^{\frac{\gamma}{1+\gamma}}}(\nabla\bar{u}_{1}^{\alpha}+O(\delta^{-\frac{1}{1+\alpha}}))\notag\\
&+\frac{\det\mathbb{F}_{1}^{\ast3}}{\det \mathbb{F}_{0}^{\ast}}(1+O(\varepsilon^{\frac{\gamma}{2(1+2\gamma)}}))(\nabla\bar{u}_{1}^{3}+O(1))+O(1)\delta^{-\frac{d}{2}}e^{-\frac{1}{2C\delta^{\gamma/(1+\gamma)}}}\notag\\
=&\sum\limits^{2}_{\alpha=1}\frac{\det\mathbb{F}_{1}^{\ast\alpha}}{\det \mathbb{F}_{0}^{\ast}}\frac{\varepsilon^{\frac{\gamma}{1+\gamma}}}{\mathcal{L}_{2}^{\alpha}\mathcal{M}_{\gamma,\tau_{0}}}\frac{1+O(\varepsilon^{\frac{\gamma^{2}}{2(1+2\gamma)(1+\gamma)^{2}}})}{1+\mathcal{G}^{\ast}_{\alpha}\varepsilon^{\frac{\gamma}{1+\gamma}}}\nabla\bar{u}_{1}^{\alpha}\notag\\
&+\frac{\det\mathbb{F}_{1}^{\ast3}}{\det \mathbb{F}_{0}^{\ast}}(1+O(\varepsilon^{\frac{\gamma}{2(1+2\gamma)}}))\nabla\bar{u}_{1}^{3}+O(1)\delta^{-\frac{1-\alpha}{1+\alpha}}\|\varphi\|_{C^{1}(\partial D)}.
\end{align*}

\end{proof}

\section{Appendix:\,The proofs of Lemmas \ref{CL001} and \ref{CL002}}

\subsection{$C^{1,\gamma}$ estimates.}
The proof of Lemma \ref{CL001} is based on the Campanato's approach, which was presented previously, for example, in \cite{GM2013}. Assume that $Q\subseteq\mathbb{R}^{d}$ is a Lipschitz domain. Define the Campanato space $\mathcal{L}^{2,\lambda}(Q)$, $\lambda\geq0$ as follows:
\begin{align*}
\mathcal{L}^{2,\lambda}(Q):=\bigg\{u\in L^{2}(Q):\,\sup\limits_{\scriptstyle x_{0}\in Q\atop\scriptstyle\;\;
\rho>0\hfill}\frac{1}{\rho^{\lambda}}\int_{B_{\rho}(x_{0})\cap Q}|u-u_{x_{0},\rho}|^{2}dx<+\infty \bigg\},
\end{align*}
where $u_{x_{0},\rho}:=\frac{1}{|Q\cap B_{\rho}(x_{0})|}\int_{Q\cap B_{\rho}(x_{0})}u(x)\,dx$. We endow the Campanato space  $\mathcal{L}^{2,\lambda}(Q)$ with the norm
\begin{align*}
\|u\|_{\mathcal{L}^{2,\lambda}(Q)}:=\|u\|_{L^{2}(Q)}+[u]_{\mathcal{L}^{2,\lambda}(Q)},
\end{align*}
where
\begin{align*}
[u]^{2}_{\mathcal{L}^{2,\lambda}(Q)}:=\sup\limits_{\scriptstyle x_{0}\in Q\atop\scriptstyle\;\;
\rho>0\hfill}\frac{1}{\rho^{\lambda}}\int_{B_{\rho}(x_{0})\cap Q}|u-u_{x_{0},\rho}|^{2}dx.
\end{align*}

A direct application of Theorem 5.14 in \cite{GM2013} gives that
\begin{theorem}\label{ASDL0}
Let $Q\subset\mathbb{R}^{d}$ be a Lipschitz domain. Let $w\in H^{1}(Q;\mathbb{R}^{d})$ be a solution of
\begin{align*}
-\partial_{j}(C^{0}_{ijkl}\partial_{l}w^{k})=\partial_{j}f_{ij}, \quad in\; Q,
\end{align*}
with $f_{ij}\in C^{0,\gamma}(Q)$, $0<\gamma<1$, and constant coefficients $C_{ijkl}^{0}$ satisfying \eqref{ellip}. Then $\nabla w\in\mathcal{L}^{2,d+2\alpha}_{loc}(Q)$ and for $B_{R}:=B_{R}(x_{0})\subset Q$,
\begin{align*}
\|\nabla w\|_{\mathcal{L}^{2,d+2\alpha}(B_{R/2})}\leq C(\|\nabla w\|_{L^{2}(B_{R})}+[F]_{\mathcal{L}^{2,d+2\alpha}(B_{R})}),
\end{align*}
where $F=(f_{ij})$ and $C=C(d,\gamma,R)$.
\end{theorem}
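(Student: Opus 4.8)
The plan is to follow the classical Campanato iteration scheme for constant-coefficient elliptic systems, exactly as in the proof of Theorem~5.14 of \cite{GM2013}; the present statement is the specialization of that result to the Lam\'{e} tensor $\mathbb{C}^0$, and the only point to check is that the argument there applies verbatim, since $\mathbb{C}^0$ satisfies the strong ellipticity \eqref{ellip} and the associated Korn inequality (here the H\"{o}lder exponent $\alpha$ in the statement plays the role of $\gamma$ in the hypothesis $f_{ij}\in C^{0,\gamma}$).

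First I would fix $B_R=B_R(x_0)\subset Q$ and, for $0<\rho\le R$, decompose $w=v+\phi$ on $B_\rho$, where $v\in H^1(B_\rho;\mathbb{R}^d)$ solves the \emph{homogeneous} system $-\partial_j(C^0_{ijkl}\partial_l v^k)=0$ in $B_\rho$ with $v=w$ on $\partial B_\rho$, and $\phi=w-v\in H^1_0(B_\rho;\mathbb{R}^d)$ solves $-\partial_j(C^0_{ijkl}\partial_l\phi^k)=\partial_j f_{ij}$. Testing the $\phi$-equation against $\phi$, using the minor symmetries of $C^0$ to rewrite the left side as $\int_{B_\rho}(\mathbb{C}^0 e(\phi),e(\phi))$, and invoking \eqref{ellip} together with the first Korn inequality gives $\frac1C\int_{B_\rho}|\nabla\phi|^2\le\big|\int_{B_\rho}(f_{ij}-(f_{ij})_{B_\rho})\partial_j\phi^i\big|$, where the mean $F_{B_\rho}$ may be subtracted because $\phi\in H^1_0$ and the equation only sees $\partial_j f_{ij}$; hence
\[
\int_{B_\rho}|\nabla\phi|^2\le C\int_{B_\rho}|F-F_{B_\rho}|^2\le C\rho^{d+2\gamma}[F]^2_{\mathcal{L}^{2,d+2\gamma}(B_R)}.
\]

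The core analytic input is the decay estimate for the homogeneous part: since the coefficients are constant, each derivative $\partial_e v$ again solves the same system, so iterated Caccioppoli inequalities (applied to $\partial_e v-(\partial_e v)_{B_\rho}$) give $\|\nabla^2 v\|_{L^\infty(B_{\rho/2})}^2\le C\rho^{-d-2}\int_{B_\rho}|\nabla v-(\nabla v)_{B_\rho}|^2$, and a first-order Taylor expansion of $\nabla v$ then yields, for $0<t\le\rho$,
\[
\int_{B_t}|\nabla v-(\nabla v)_{B_t}|^2\le C\Big(\tfrac{t}{\rho}\Big)^{d+2}\int_{B_\rho}|\nabla v-(\nabla v)_{B_\rho}|^2 .
\]
Adding the $\phi$-bound and using $\nabla w=\nabla v+\nabla\phi$ gives the standard iteration inequality $\Phi(t)\le C(t/\rho)^{d+2}\Phi(\rho)+C\rho^{d+2\gamma}$ for $0<t\le\rho\le R$, where $\Phi(t):=\int_{B_t(x_0)}|\nabla w-(\nabla w)_{B_t}|^2$. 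Since $d+2\gamma<d+2$, the classical iteration lemma (Lemma~2.1 of Chapter~III, or Lemma~5.13 in \cite{GM2013}) upgrades this to $\Phi(t)\le Ct^{d+2\gamma}\big(R^{-d-2\gamma}\|\nabla w\|_{L^2(B_R)}^2+[F]^2_{\mathcal{L}^{2,d+2\gamma}(B_R)}\big)$ for all $t\le R$; a covering argument over all sub-balls $B_t(x_1)\subset B_{R/2}$ (using $B_{R/2}(x_1)\subset B_R(x_0)$) then gives $\nabla w\in\mathcal{L}^{2,d+2\gamma}(B_{R/2})$ together with the asserted estimate, after noting that $F\in C^{0,\gamma}(B_R)$ implies $[F]_{\mathcal{L}^{2,d+2\gamma}(B_R)}\le C[F]_{\gamma,B_R}$.

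The step I expect to be the main technical point is the homogeneous decay estimate, i.e. the quantitative $C^{1,1}$ (or $C^2$) interior regularity for the constant-coefficient Lam\'{e} system with constants depending only on $\lambda,\mu,d$: this is where strong ellipticity \eqref{ellip} and Korn's inequality enter, and it requires carefully tracking the powers of $\rho$ through the Caccioppoli bootstrap. Everything else—the decomposition, the energy estimate for $\phi$, and the iteration lemma—is routine bookkeeping of the Campanato machinery.
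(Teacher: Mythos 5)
Your proposal is correct and is exactly the Campanato iteration argument underlying Theorem~5.14 of Giaquinta--Martinazzi, which is all the paper invokes here (the paper offers no proof of Theorem~\ref{ASDL0}, simply declaring it ``a direct application of Theorem 5.14 in \cite{GM2013}''). The decomposition into a homogeneous part $v$ with the $(t/\rho)^{d+2}$ decay of $\int|\nabla v-(\nabla v)_{B_t}|^2$, the energy estimate for $\phi$ via Korn and the mean-subtraction trick, and the iteration lemma are precisely the ingredients of that cited proof, and you correctly flag that the exponent written as $\alpha$ in the statement is the same $\gamma$ from the hypothesis $f_{ij}\in C^{0,\gamma}$.
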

Due to the equivalence that the Campanato space $\mathcal{L}^{2,\lambda}(Q)$ is equivalent to the H\"{o}lder space $C^{0,\gamma}(Q)$ in the case of $d<\gamma\leq d+2$ and $\gamma=\frac{\lambda-d}{2}$, it follows from the proof of Theorem \ref{ASDL0} (Theorem 5.14 of \cite{GM2013}) that
\begin{corollary}\label{CO001}
Assume as in Lemma \ref{CL001}. Let $w$ be the solution of \eqref{ADCo1}. Then for $B_{R}:=B_{R}(x_{0})\subset Q$,
\begin{align}\label{LAG001}
[\nabla w]_{\gamma,B_{R/2}}\leq C\Big(\frac{1}{R^{1+\gamma}}\|w\|_{L^{\infty}(B_{R})}+[F]_{\gamma,B_{R}} \Big),
\end{align}
where $C=C(d,\gamma,R)$.

\end{corollary}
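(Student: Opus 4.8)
The statement is an interior estimate: the Dirichlet condition on $\Gamma$ in \eqref{ADCo1} plays no role here since $B_{R}(x_{0})\subset Q$, so \eqref{ADCo1} enters only as the interior system with the constant tensor $\mathbb{C}^{0}$ obeying \eqref{ellip}. The plan is to combine three ingredients: the interior Campanato estimate of Theorem \ref{ASDL0}, the Campanato--Morrey characterisation of $C^{0,\gamma}$, and a Caccioppoli inequality that trades the $L^{2}$-norm of $\nabla w$ appearing in Theorem \ref{ASDL0} for the $L^{\infty}$-norm of $w$, all while keeping track of the powers of $R$.

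Two of these ingredients are elementary. Since a ball satisfies the measure density condition $|B_{r}(y)\cap B_{\rho}|\ge c\,r^{d}$ uniformly in $y,r,\rho$, Campanato's theorem gives, for any vector field $v$, the seminorm bound $[v]_{\gamma,B_{\rho}}\le C(d,\gamma)\,[v]_{\mathcal{L}^{2,d+2\gamma}(B_{\rho})}$ with a constant independent of $\rho$; I would apply this with $v=\nabla w$. In the opposite, cheap, direction, for $F\in C^{0,\gamma}(B_{R})$ the oscillation of $F$ over a subball of radius $r$ is at most $[F]_{\gamma,B_{R}}(2r)^{\gamma}$, whence $[F]_{\mathcal{L}^{2,d+2\gamma}(B_{R})}\le C\,[F]_{\gamma,B_{R}}$, which controls the $F$-term supplied by Theorem \ref{ASDL0}.

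For the Caccioppoli step I would exploit that the right-hand side of \eqref{ADCo1} is in divergence form, so subtracting the constant matrix $F_{x_{0},R}$ from $F$ (and a constant vector from $w$) leaves the equation unchanged. Testing the interior system against $w\eta^{2}$, with $\eta$ a cutoff equal to $1$ on $B_{R/2}$, supported in $B_{3R/4}$, and $|\nabla\eta|\le C/R$, and using the G\r{a}rding/Korn inequality for the strongly elliptic constant tensor $\mathbb{C}^{0}$ on compactly supported fields together with Young's inequality to absorb the cross terms, one gets
\begin{align*}
\|\nabla w\|_{L^{2}(B_{R/2})}\le \frac{C}{R}\,\|w\|_{L^{2}(B_{R})}+C\,\|F-F_{x_{0},R}\|_{L^{2}(B_{R})}\le C R^{\frac{d}{2}-1}\|w\|_{L^{\infty}(B_{R})}+C R^{\frac{d}{2}+\gamma}\,[F]_{\gamma,B_{R}}.
\end{align*}
Applying Theorem \ref{ASDL0} on $B_{R/2}(x_{0})\subset Q$ and tracking the $R$-scaling in its conclusion yields $[\nabla w]_{\mathcal{L}^{2,d+2\gamma}(B_{R/4})}^{2}\le C R^{-(d+2\gamma)}\|\nabla w\|_{L^{2}(B_{R/2})}^{2}+C\,[F]_{\gamma,B_{R}}^{2}$; inserting the Caccioppoli bound turns the first term into $C R^{-2-2\gamma}\|w\|_{L^{\infty}(B_{R})}^{2}$, and the Campanato-to-Hölder inequality above upgrades the left-hand side to $[\nabla w]_{\gamma,B_{R/4}}$. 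Finally a routine covering argument, combined with an interior $C^{1}$ bound $\|\nabla w\|_{L^{\infty}(B_{R/2})}\le C(\|w\|_{L^{\infty}(B_{R})}+[F]_{\gamma,B_{R}})$ (itself a consequence of the $W^{1,p}$ theory underlying Lemma \ref{CL002}, or of the same Campanato scheme) to handle pairs of far-apart points, removes the discrepancy between $B_{R/4}$ and $B_{R/2}$ and gives \eqref{LAG001}.

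I expect the genuine technical point to be the Caccioppoli inequality: the ellipticity hypothesis \eqref{ellip} is phrased for symmetric matrices (i.e.\ for strains), so coercivity of $\int C^{0}_{ijkl}\partial_{l}\varphi^{k}\partial_{j}\varphi^{i}$ on compactly supported $\varphi$ must be extracted through the G\r{a}rding (or first Korn) inequality rather than read off pointwise, and one must handle with care the integration-by-parts step relating $\int C^{0}_{ijkl}\partial_{l}w^{k}\,\partial_{j}(w^{i}\eta^{2})$ to $\int C^{0}_{ijkl}\partial_{l}(w\eta)^{k}\partial_{j}(w\eta)^{i}$ plus lower-order terms. The only other thing requiring attention is the bookkeeping of the powers of $R$, so that exactly $R^{-(1+\gamma)}$ multiplies $\|w\|_{L^{\infty}(B_{R})}$; everything else is a direct transcription of the proof of Theorem \ref{ASDL0}.
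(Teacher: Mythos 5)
Your proposal is correct and follows essentially the same route as the paper, which obtains Corollary \ref{CO001} by invoking the Campanato scheme behind Theorem \ref{ASDL0} (Theorem 5.14 of \cite{GM2013}) together with the equivalence of $\mathcal{L}^{2,d+2\gamma}$ and $C^{0,\gamma}$; your Caccioppoli inequality (via the symmetry of $\mathbb{C}^{0}$, first Korn on $H^{1}_{0}$ and subtraction of the constant matrix $F_{x_{0},R}$), the dilation bookkeeping, and the final covering/interpolation step are exactly the details the paper leaves to the cited reference, and they do produce the stated $R^{-(1+\gamma)}\|w\|_{L^{\infty}(B_{R})}$ term. The technical point you flag — that \eqref{ellip} is coercive only on strains, so the quadratic form must be handled through $e(w\eta)$ and Korn rather than pointwise — is genuine but resolves as you indicate, and is the same manipulation the paper itself carries out in \eqref{KLW01}--\eqref{KLW02}.
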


\begin{proof}[Proof of Lemma \ref{CL001}]
In view of $\Gamma\in C^{1,\gamma}$, then at each point $x_{0}\in\Gamma$, there exists a neighbourhood $U$ containing $x_{0}$ and a homeomorphism $\Psi\in C^{1,\gamma}(U)$ such that
\begin{align*}
\Psi(U\cap Q)=\mathcal{B}^{+}_{1}=\{y\in\mathcal{B}_{R}(0):y_{d}>0\},\quad \Psi(U\cap \Gamma)=\partial\mathcal{B}^{+}_{1}=\{y\in\mathcal{B}_{1}(0):y_{d}=0\},
\end{align*}
where $\mathcal{B}_{1}(0):=\{y\in\mathbb{R}^{d}:|y|<1\}$. Under the transformation $y=\Psi(x)=(\Psi^{1}(x),...,\Psi^{d}(x))$, we denote
\begin{align*}
\mathcal{W}(y):=w(\Phi^{-1}(y)),\quad \mathcal{J}:=\frac{\partial((\Psi^{-1})^{1},...,(\Psi^{-1})^{d})}{\partial(y^{1},...,y^{d})},\quad |\mathcal{J}(y)|:=\det\mathcal{J}(y),
\end{align*}
and
\begin{align*}
\mathcal{C}^{0}_{ijkl}(y):=&C^{0}_{i\hat{j}k\hat{l}}|\mathcal{J}(y)|(\partial_{\hat{l}}(\Psi^{-1})^{l}(y))^{-1}\partial_{\hat{j}}\Psi^{j}(\Psi^{-1}(y)),\\
\mathcal{F}_{ij}(y):=&|\mathcal{J}(y)|\partial_{\hat{l}}\Psi^{j}(\Psi^{-1}(y))f_{i\hat{l}}(\Psi^{-1}(y)).
\end{align*}
Therefore, recalling equation \eqref{ADCo1}, we know that $\mathcal{W}$ solves
\begin{align*}
\begin{cases}
-\partial_{j}(\mathcal{C}^{0}_{ijkl}(y)\partial_{l}\mathcal{W}^{k})=\partial_{j}\mathcal{F}_{ij},&\quad\mathrm{in}\;\mathcal{B}^{+}_{R},\\
\mathcal{W}=0,&\quad\mathrm{on}\;\partial \mathcal{B}_{R}^{+}\cap\partial\mathbb{R}^{d}_{+},
\end{cases}
\end{align*}
where $0<R\leq1$. Let $y_{0}=\Psi(x_{0})$. By freezing the coefficients, we have
\begin{align*}
-\partial_{j}(\mathcal{C}^{0}_{ijkl}(y_{0})\partial_{l}\mathcal{W}^{k})=\partial_{j}((\mathcal{C}^{0}_{ijkl}(y)-\mathcal{C}^{0}_{ijkl}(y_{0}))\partial_{l}\mathcal{W}^{k})+\partial_{j}\mathcal{F}_{ij}.
\end{align*}
Then it follows from the equivalence between the Campanato space and the H\"{o}lder space and the proof of Theorem 7.1 (Theorem 5.14 of \cite{GM2013}) again that
\begin{align*}
[\nabla\mathcal{W}]_{\gamma,\mathcal{B}^{+}_{R/2}}\leq& C\Big(\frac{1}{R^{1+\gamma}}\|\mathcal{W}\|_{L^{\infty}(\mathcal{B}^{+}_{R})}+[\mathcal{F}]_{\gamma,\mathcal{B}^{+}_{R}}\Big)\notag\\
&+C[(\mathcal{C}^{0}_{ijkl}(y)-\mathcal{C}^{0}_{ijkl}(y_{0}))\partial_{l}\mathcal{W}^{k}]_{\gamma,\mathcal{B}^{+}_{R}},
\end{align*}
where $\mathcal{F}:=(\mathcal{F}_{ij})$. In view of $\mathcal{C}^{0}_{ijkl}(y)\in C^{0,\gamma}$, we obtain
\begin{align*}
[(\mathcal{C}^{0}_{ijkl}(y)-\mathcal{C}^{0}_{ijkl}(y_{0}))\partial_{l}\mathcal{W}^{k}]_{\gamma,\mathcal{B}^{+}_{R}}\leq C(R^{\gamma}[\nabla\mathcal{W}]_{\gamma,\mathcal{B}^{+}_{R}}+\|\nabla\mathcal{W}\|_{L^{\infty}(\mathcal{B}^{+}_{R})}).
\end{align*}
A direct application of the interpolation inequality (for example, see Lemma 6.32 in \cite{GT1998}) gives that
\begin{align*}
\|\nabla\mathcal{W}\|_{L^{\infty}(\mathcal{B}^{+}_{R})}\leq R^{\gamma}[\nabla\mathcal{W}]_{\gamma,\mathcal{B}^{+}_{R}}+\frac{C}{R}\|\mathcal{W}\|_{L^{\infty}(\mathcal{B}^{+}_{R})},
\end{align*}
where $C=C(d)$. Hence, we obtain
\begin{align*}
[\nabla\mathcal{W}]_{\gamma,\mathcal{B}^{+}_{R/2}}\leq&C\Big(\frac{1}{R^{1+\gamma}}\|\mathcal{W}\|_{L^{\infty}(\mathcal{B}^{+}_{R})}+R^{\gamma}[\nabla\mathcal{W}]_{\gamma,\mathcal{B}^{+}_{R}}+[\mathcal{F}]_{\gamma,\mathcal{B}^{+}_{R}}\Big),
\end{align*}
which, together with the fact that $\Psi$ is a homeomorphism, yields that
\begin{align*}
[\nabla w]_{\gamma,\mathcal{N}^{'}}\leq&C\Big(\frac{1}{R^{1+\gamma}}\|w\|_{L^{\infty}(\mathcal{N})}+R^{\gamma}[\nabla w]_{\gamma,\mathcal{N}}+[F]_{\gamma,\mathcal{N}}\Big),
\end{align*}
where $\mathcal{N}'=\Psi^{-1}(\mathcal{B}^{+}_{R/2})$, $\mathcal{N}=\Psi^{-1}(\mathcal{B}^{+}_{R})$, and $C=C(d,\gamma,\Psi)$. Observe that there is a constant $0<\sigma<1$, independent of $R$, such that $B_{\sigma R}(x_{0})\cap Q\subset\mathcal{N}'$.

Consequently, for any domain $Q'\subset\subset Q\cup\Gamma$ and $x_{0}\in Q'\cap\Gamma$, there exist $\mathcal{R}_{0}:=\mathcal{R}_{0}(x_{0})$ and $C_{0}:=C_{0}(d,\gamma,x_{0})$ such that
\begin{align}\label{HMA003}
[\nabla w]_{\gamma,B_{\mathcal{R}_{0}}(x_{0})\cap Q'}\leq&C_{0}\Big(\mathcal{R}_{0}^{\gamma}[\nabla w]_{\gamma,Q'}+\frac{1}{\mathcal{R}_{0}^{1+\gamma}}\|w\|_{L^{\infty}(Q)}+[F]_{\gamma,Q}\Big).
\end{align}
Then it follows from the finite covering theorem that there exist finite $B_{\mathcal{R}_{i}}(x_{i})\in \{B_{\mathcal{R}_{0}/2}(x_{0})|\,x_{0}\in\Gamma\cap Q'\}$, $i=1,2,...,K$, covering $\Gamma\cap Q'$. Use $C_{i}$ to denote the constant in \eqref{HMA003} corresponding to $x_{i}$ and write
\begin{align*}
\overline{C}:=\max\limits_{1\leq i\leq K}\{C_{i}\},\quad \overline{\mathcal{R}}:=\min\limits_{1\leq i\leq K}\big\{\frac{\mathcal{R}_{i}}{2}\big\}.
\end{align*}
Thus, for any $x_{0}\in\Gamma\cap Q'$, there exists some $1\leq i_{0}\leq K$ such that $B_{\overline{R}}(x_{0})\subset B_{\overline{R}_{i_{0}}}(x_{i_{0}})$ and
\begin{align}\label{HMA005}
[\nabla w]_{\gamma,B_{\overline{R}}(x_{0})\cap Q'}\leq&\overline{C}\Big(\overline{R}^{\gamma}[\nabla w]_{\gamma,Q'}+\frac{1}{\overline{R}^{1+\gamma}}\|w\|_{L^{\infty}(Q)}+[F]_{\gamma,Q}\Big).
\end{align}

We further establish the estimates on $Q'$ in the following. Let $\widetilde{C}$ be the constant in \eqref{LAG001} of Corollary \ref{CO001}. Define
\begin{align*}
\widehat{C}:=\max\{\overline{C},\widetilde{C}\},\quad\widehat{\mathcal{R}}:=\min\{(3\widehat{C})^{-1/\gamma},\overline{\mathcal{R}}\}.
\end{align*}
Note that for any $x_{1},x_{2}\in Q'$, there are three cases to occur:
\begin{enumerate}
\item[$(i)$]
$|x_{1}-x_{2}|\geq\frac{\widehat{\mathcal{R}}}{2}$;
\item[$(ii)$]
there exists some $1\leq i_{0}\leq K$ such that $x_{1},x_{2}\in B_{\widehat{R}/2}(x_{i_{0}})\cap Q'$;
\item[$(iii)$]
$x_{1},x_{2}\in B_{\widehat{R}/2}\subset Q'$.
\end{enumerate}
If case $(i)$ holds, then we have
\begin{align*}
\frac{|\nabla w(x_{1})-\nabla w(x_{2})|}{|x_{1}-x_{2}|^{\gamma}}\leq \frac{2^{1+\gamma}}{\widehat{R}^{\gamma}}\|\nabla w\|_{L^{\infty}(Q')}.
\end{align*}
If case $(ii)$ holds, then it follows from \eqref{HMA005} that
\begin{align*}
\frac{|\nabla w(x_{1})-\nabla w(x_{2})|}{|x_{1}-x_{2}|^{\gamma}}\leq&[\nabla w]_{\gamma,B_{\widehat{R}/2(x_{i_{0}})\cap Q'}}\notag\\
\leq&\widehat{C}\Big(\widehat{R}^{\gamma}[\nabla w]_{\gamma,Q'}+\frac{1}{\widehat{R}^{1+\gamma}}\|w\|_{L^{\infty}(Q)}+[F]_{\gamma,Q}\Big).
\end{align*}
If case $(iii)$ holds, then we see from Corollary \ref{CO001} that
\begin{align*}
\frac{|\nabla w(x_{1})-\nabla w(x_{2})|}{|x_{1}-x_{2}|^{\gamma}}\leq&[\nabla w]_{\gamma,B_{\widehat{R}/2}}\leq\widehat{C}\Big(\frac{1}{\widehat{R}^{1+\gamma}}\|w\|_{L^{\infty}(Q)}+[F]_{\gamma,Q}\Big).
\end{align*}
Therefore, we deduce
\begin{align*}
[\nabla w]_{\gamma,Q'}\leq\widehat{C}\Big(\widehat{R}^{\gamma}[\nabla w]_{\gamma,Q'}+\frac{1}{\widehat{R}^{1+\gamma}}\|w\|_{L^{\infty}(Q)}+[F]_{\gamma,Q}\Big)+\frac{2^{1+\gamma}}{\widehat{R}^{\gamma}}\|\nabla w\|_{L^{\infty}(Q')}.
\end{align*}
Applying the interpolation inequality (see Lemma 6.32 in \cite{GT1998}) again, we obtain
\begin{align*}
\frac{2^{1+\gamma}}{\widehat{R}^{\gamma}}\|\nabla w\|_{L^{\infty}(Q')}\leq \frac{1}{3}[\nabla w]_{\gamma,Q'}+\frac{C}{\widehat{R}^{1+\gamma}}\|w\|_{L^{\infty}(Q')},
\end{align*}
where $C=C(d,\gamma)$. Due to the fact that $\widehat{R}\leq(3\widehat{C})^{-1/\gamma}$, we deduce that
\begin{align*}
[\nabla w]_{\gamma,Q'}\leq C(\|w\|_{L^{\infty}(Q)}+[F]_{\gamma,Q}),
\end{align*}
where $C=C(d,\gamma,Q',Q)$. This, together with the interpolation inequality, yields that \eqref{GNA001} holds.

\end{proof}

\subsection{$W^{1,p}$ estimates}
\begin{proof}[Proof of Lemma \ref{CL002}]
To begin with, we establish the $W^{1,p}$ interior estimates. Due to the fact that $w\neq0$ on $\partial B_{R}$ for any $B_{R}\subset Q$, we pick a smooth cut-off function $\eta\in C_{0}^{\infty}(B_{R})$ such that
\begin{align*}
0\leq\eta\leq1,\quad\eta=1\;\mathrm{in}\; B_{\rho},\quad|\nabla\eta|\leq\frac{C}{R-\rho}.
\end{align*}
Recalling equation \eqref{ADCo1}, we know that $\eta w$ solves
\begin{align*}
\int_{B_{R}}C^{0}_{ijkl}\partial_{l}(\eta w^{k})\partial_{j}\varphi^{i}=\int_{B_{R}}(G_{i}\varphi^{i}+\widetilde{F}\partial_{j}\varphi^{i}),\quad\forall\varphi\in C^{\infty}_{0}(B_{R};\mathbb{R}^{d}),
\end{align*}
where
\begin{align*}
G_{i}:=f_{ij}\partial_{j}\eta-C^{0}_{ijkl}\partial_{l}w^{k}\partial_{j}\eta,\quad\widetilde{F}_{ij}:=f_{ij}\eta+C^{0}_{ijkl}w^{k}\partial_{l}\eta.
\end{align*}
Let $v\in H^{1}_{0}(B_{R};\mathbb{R}^{d})$ be the weak solution of
\begin{align}\label{LLAM0}
-\Delta v^{i}=G_{i}.
\end{align}
Then $\eta w$ verifies
\begin{align*}
\int_{B_{R}}C^{0}_{ijkl}\partial_{l}(\eta w^{k})\partial_{j}\varphi^{i}=\int_{B_{R}}\widehat{F}\partial_{j}\varphi^{i},\quad\forall\varphi\in C^{\infty}_{0}(B_{R};\mathbb{R}^{d}),
\end{align*}
where $\widehat{F}_{ij}:=\widetilde{F}_{ij}+\partial_{j}v^{i}$.

Since $f_{ij}\in C^{0,\gamma}$, then we obtain that $f_{ij}\in L^{p}(B_{R})$ for any $d\leq p<\infty$. Suppose that $w\in W^{1,q}(B_{R};\mathbb{R}^{d})$, $q\geq2$. Then we know
\begin{align}\label{MBL001}
G_{i}&\in L^{p\wedge q}(B_{R}),\quad\mathrm{where}\;p\wedge q:=\min\{p,q\},
\end{align}
and
\begin{align}\label{MBL002}
\widetilde{F}_{ij}\in L^{p\wedge q^{\ast}}(B_{R}),\quad\mathrm{where}\;q^{\ast}:=&
\begin{cases}
\frac{dq}{d-q},&q<d,\\
2q,&q\geq d.
\end{cases}
\end{align}
Utilizing $L^{2}$ estimate for equation \eqref{LLAM0}, we see that $\nabla^{2}v\in L^{2}(B_{R})$ and
\begin{align*}
-\Delta(\partial_{j}v^{i})=\partial_{j}G_{i}.
\end{align*}
This, in combination with \eqref{MBL001} and Theorem 7.1 in \cite{GM2013}, gives that $\nabla(\partial_{j}v^{i})\in L^{p\wedge q}(B_{R})$. Then applying the Sobolev embedding theorem, we get $\partial_{j}v^{i}\in L^{(p\wedge q)^{\ast}}$. Together with \eqref{MBL002}, this yields that $\widehat{F}_{ij}\in L^{p\wedge q^{\ast}}(B_{R})$. Then it follows from Theorem 7.1 in \cite{GM2013} again that
\begin{align*}
\|\nabla(\eta w)\|_{L^{p\wedge q^{\ast}}(B_{R})}\leq C\|\widehat{F}\|_{L^{p\wedge q^{\ast}}(B_{R})},
\end{align*}
where $C=C(d,\lambda,\mu,p,q)$ and $\widehat{F}:=(\widehat{F}_{ij})$, $i,j=1,2,...,d$. In light of the definition of $G_{i}$ and $\widetilde{F}_{ij}$ and using \eqref{MBL001}--\eqref{MBL002}, we obtain
\begin{align}\label{AGTQ001}
\|\nabla w\|_{L^{p\wedge q^{\ast}}(B_{\rho})}\leq\frac{C}{R-\rho}(\|w\|_{W^{1,p}(B_{R})}+\|F\|_{L^{p}(B_{R})}),
\end{align}
where $C=C(d,\lambda,\mu,p,q)$.

We next demonstrate that $\nabla w\in L^{p}(B_{R/2})$. Introduce a series of balls with radii as follows:
\begin{align*}
R_{k}=R\Big(\frac{1}{2}+\frac{1}{2^{k+1}}\Big),\quad k\geq0.
\end{align*}
Picking $\rho=R_{1}$ and $q=2$ in \eqref{AGTQ001}, we have
\begin{align*}
\|\nabla w\|_{L^{p\wedge 2^{\ast}}(B_{R_{1}})}\leq\frac{C}{R}(\|w\|_{W^{1,2}(B_{R})}+\|F\|_{L^{p}(B_{R})}).
\end{align*}
If $p\leq2^{\ast}$, then we complete the proof. If $p>2^{\ast}$, then $\nabla w\in L^{2^{\ast}}(B_{R_{1}})$ and
\begin{align*}
\|\nabla w\|_{L^{2^{\ast}}(B_{R_{1}})}\leq\frac{C}{R}(\|w\|_{W^{1,2}(B_{R})}+\|F\|_{L^{p}(B_{R})}),
\end{align*}
which, together with choosing $R=R_{1}$, $\rho=R_{2}$ and $q=2^{\ast}$ in \eqref{AGTQ001}, gives that
\begin{align*}
\|\nabla w\|_{L^{p\wedge2^{\ast\ast}}(B_{R_{2}})}\leq&\frac{C}{R}(\|w\|_{W^{1,2^{\ast}}(B_{R_{1}})}+\|F\|_{L^{p}(B_{R_{1}})})\notag\\
\leq&\frac{C}{R^{2}}(\|w\|_{W^{1,2}(B_{R})}+\|F\|_{L^{p}(B_{R})}).
\end{align*}
If $p\leq2^{\ast\ast}$, then we complete the proof. If $p>2^{\ast\ast}$, after repeating the above argument with finite steps, we derive that $\nabla w\in L^{p}(B_{R/2})$ and
\begin{align}\label{JANT001}
\|\nabla w\|_{L^{p}(B_{R/2})}\leq C(\|w\|_{H^{1}(B_{R})}+\|F\|_{L^{p}(B_{R})}),
\end{align}
where $C=C(d,\lambda,\mu,p,\mathrm{dist}(B_{R},\partial Q))$.

Finally, we utilize the method of locally flattening the boundary to establish the $W^{1,p}$ estimates near boundary $\Gamma$, which is almost the same to the proof in Lemma \ref{CL001}. With the same notations as before, we obtain that $\mathcal{W}(y):=w(\Psi^{-1}(y))\in H^{1}(\mathcal{B}^{+}_{R},\mathbb{R}^{d})$ solves
\begin{align*}
\int_{\mathcal{B}^{+}_{R}}\mathcal{C}^{0}_{ijkl}(y)\partial_{l}\mathcal{W}^{k}\partial_{j}\varphi^{i}dy=\int_{\mathcal{B}^{+}_{R}}\mathcal{F}_{ij}\partial_{j}\varphi^{i}dy,\quad\forall\varphi\in H^{1}_{0}(\mathcal{B}^{+}_{R},\mathbb{R}^{d}).
\end{align*}
Making use of the proof of Theorem 7.2 of \cite{GM2013}, we derive that for any $d\leq p<\infty$,
\begin{align*}
\|\nabla\mathcal{W}\|_{L^{p}(\mathcal{B}^{+}_{R/2})}\leq C(\|\mathcal{W}\|_{H^{1}(\mathcal{B}^{+}_{R})}+\|\mathcal{F}\|_{L^{p}(\mathcal{B}^{+}_{R})}),
\end{align*}
where $C=C(\lambda,\mu,p,R,\Psi)$. Then back to $w$, we get
\begin{align*}
\|\nabla w\|_{L^{p}(\mathcal{N}^{'})}\leq C(\|\mathcal{W}\|_{H^{1}(\mathcal{N})}+\|\mathcal{F}\|_{L^{p}(\mathcal{N})}),
\end{align*}
where $\mathcal{N}'=\Psi^{-1}(\mathcal{B}^{+}_{R/2})$, $\mathcal{N}=\Psi^{-1}(\mathcal{B}^{+}_{R})$ and $C=C(\lambda,\mu,p,R,\Psi)$. Furthermore, there exists a constant $0<\sigma<1$ independent of $R$ such that $B_{\sigma R}\cap Q\subset\mathcal{N}'$.

Hence, for each $x_{0}\in Q'\cap\Gamma$, there exists $R_{0}:=R_{0}(x_{0})>0$ such that
\begin{align}\label{JANT002}
\|\nabla w\|_{L^{p}(B_{\sigma R_{0}}(x_{0})\cap Q')}\leq C(\|w\|_{H^{1}(Q)}+\|F\|_{L^{p}(Q)}),
\end{align}
where $C=C(\lambda,\mu,p,x_{0},R)$. Then combining \eqref{JANT001}--\eqref{JANT002} and utilizing the finite covering theorem, we deduce that
\begin{align*}
\|\nabla w\|_{L^{p}(Q')}\leq C(\|w\|_{H^{1}(Q)}+\|F\|_{L^{p}(Q)}),
\end{align*}
where $C=C(\lambda,\mu,p,Q',Q)$. Together with the Poincar\'{e} inequality, this gives that \eqref{LNZ001} holds.

Observe that for any constant matrix $\mathcal{M}=(\mathfrak{a}_{ij})$, $i,j=1,2,...,d$, $w$ verifies \eqref{ADCo1} with $F-\mathcal{M}$ substituting for $F$. Consequently, by utilizing the continuous injection that $W^{1,p}\hookrightarrow C^{0,\gamma}$, $0<\gamma\leq1-d/p$, we conclude that \eqref{LNZ002} holds.

\end{proof}


\end{document}